%% file: main.tex
\pdfoutput=1
\documentclass[toc]{mathprint} %
\usepackage{rutarmacros}
\usepackage{macros}

\title[Sets of continued fractions]{Multi-scale properties of continued fraction sets}

\author
  {Alex Rutar}
  {Department of Mathematics and Statistics, University of Jyväskylä, P.O.\ Box 35 (MaD), FI-40014 University of Jyväskylä, Finland}
  {alex@rutar.org}

\begin{document}
\begin{abstract}
    We survey the dimension theory of sets of real numbers with regular continued fraction expansion restricted to a non-empty and possibly infinite subset $\D\subset\N$.
\end{abstract}

\section{Continued fractions with restricted digits}

\subsection{Regular continued fractions}
Let $x\in\R$.
A \emph{(regular) continued fraction expansion} of $x$ is representation of the form
\begin{equation*}
    x = a_0 + \cfrac{1}{a_1 + \cfrac{1}{a_2 + \cfrac{1}{a_3 + \cfrac{1}{\ddots}}}}
\end{equation*}
where $a_n\in\Z$ and $a_n \geq 1$ for $n \neq 0$.
We will write $x = [a_0; a_1, a_2, a_3, \ldots]$.
It is easy to check that the continued fraction expansion is infinite if and only if $x$ is irrational, in which case the continued fraction expansion is also unique.
If $x$ is rational then there are two finite continued fraction expansions.

Henceforth, we only consider numbers $x\in\Ir\coloneqq(0,1)\setminus\Q$ and write $x = [a_1,a_2,a_3,\ldots] = [0; a_1, a_2, a_3, \ldots]$.
We are primarily interested in the following subsets of $\Ir$.
For an arbitrary non-empty (and possibly infinite) subset $\D\subset\N$, we write
\begin{equation*}
    \Lambda_{\D} = \{x = [a_1,a_2,a_3,\ldots]: a_n\in\D\text{ for all }n \in\N\}.
\end{equation*}
This is the set of numbers for which the continued fraction expansion only contains digits in $\D$.
A classical example is the case $\D = \{1,\ldots,\tau\}$ for some $\tau\in\N$, in which case $\Lambda_{\D}$ is the set of $\tau$-badly approximable numbers.

The continued fraction expansion can be obtained in a natural way using the \emph{Gauss map}.
The Gauss map is the transformation $\Gamma\colon\Ir\to\Ir$ defined by the rule
\begin{equation}\label{e:Gamma-def}
    \Gamma(x) \coloneqq \left\{\frac{1}{x}\right\} = \frac{1}{x} - \left\lfloor\frac{1}{x}\right\rfloor.
\end{equation}
If $x = [a_1,a_2,a_3,a_4,\ldots]$, then $\Gamma(x) = [a_2,a_3,a_4,\ldots]$, so $\Gamma$ acts like a left shift on the continued fraction expansion of $x$.
In particular, the sets $\Lambda_{\D}$ are \emph{forward invariant} for the Gauss map: $\Gamma(\Lambda_{\D}) = \Lambda_{\D}$.
These are certainly not the only invariant sets!
However, they are perhaps the simplest family of invariant sets.

\subsection{Infinitely-generated self-conformal sets}
We now see the representation of the sets $\Lambda_{\D}$ which will be of primary use in this survey: as the attractor of an infinitely-generated self-conformal IFS.

The Gauss map is far from invertible, in that $\Gamma^{-1}(y)$ is a countably infinite set for all $y$.
On the other hand, for $b\in\N$, if we restrict $\Gamma$ to the interval $J_b \coloneqq ((b+1)^{-1}, b^{-1})\cap\Ir$, then $\Gamma$ is invertible and $\Gamma(J_b) = \Ir$.
We call the restriction $\Gamma|_{J_b}$ a \emph{branch} of $\Gamma$.

We can invert each branch to obtain a map $\phi_b = \Gamma_{J_b}^{-1}\colon\Ir \to J_b \subset \Ir$.
Concretely, $\phi_b(x) = (b + x)^{-1}$, which we can think of as a map from $[0,1]$ to $[0,1]$.
Then the set $\Lambda_{\D}$ can equivalently be obtained as the maximal invariant set for the family of maps $\{\phi_b\}_{b\in\D}$: that is, the union over all sets $E\subset [0,1]$ for which
\begin{equation*}
    E = \bigcup_{b\in\D}\phi_b(E).
\end{equation*}
This allows us to place the study of the sets $\Lambda_{\D}$ in a somewhat more general context.
\begin{definition}
    Suppose $(X, d)$ is a compact metric space and $\mathcal{I}$ is a countable index set.
    A countable family of maps $\{f_i\}_{i\in\mathcal{I}}$ from $X$ to itself is \emph{(uniformly) contracting} if there is a constant $\xi<1$ such that each $f_n$ is a Lipschitz contraction with constant $\xi$:
    \begin{equation*}
        d(f_n(x), f_n(y)) \leq \xi d(x, y)\qquad\text{for all}\qquad x,y\in X.
    \end{equation*}
\end{definition}
Similarly to the case for the sets $\Lambda_{\D}$, given a contracting family of maps $\{f_i\}_{i\in\mathcal{I}}$, there is a unique maximal invariant set
\begin{equation*}
    \Lambda = \bigcup_{i\in\mathcal{I}}f_i(\Lambda).
\end{equation*}
We call the family $\{f_i\}_{i\in\mathcal{I}}$ an \emph{iterated function system} (or \emph{IFS} for short) and the invariant set $\Lambda$ the \emph{attractor}.

An IFS $\{f_i\}_{i\in\mathcal{I}}$ comes equipped with a surjective \emph{coding map} $\pi\colon\mathcal{I}^{\N} \to \Lambda$ defined by the rule
\begin{equation*}
    \{\pi((i_n)_{n=1}^\infty)\} = \lim_{n\to\infty}f_{i_1}\circ\cdots\circ f_{i_n}(X).
\end{equation*}
The contracting property and compactness of $X$ ensures that the function $\pi$ is well-defined, and it is also easy to see that $\pi$ is continuous when $\mathcal{I}^{\N}$ is equipped with the product topology.
In particular, we alternatively note that $\Lambda = \pi(\mathcal{I}^{\N})$.
For example, in the special case of the IFS $\{\phi_b\}_{b\in\D}$, $\pi$ is the map which sends a sequence $(a_n)_{n=1}^\infty$ to the corresponding continued fraction $[a_1,a_2,a_3,\ldots]$, and $\pi\circ\sigma = \Gamma\circ \pi$ where $\sigma\colon\D^{\N}\to\D^{\N}$ is the left shift map.

Studying general attractors of infinite iterated function systems is rather intractable.
Later on, we will make more assumptions on the maps $\{f_i\}_{i\in\mathcal{I}}$.
These assumptions will be satisfied by the IFS $\{\phi_b\}_{b\in\D}$.
\begin{remark}
    The maps $\phi_b$ are not literally contractions since $\phi_b'(1) = 1$.
    However, the maps $\phi_b$ for $b \geq 2$ are uniformly contracting, and we can replace the map $\phi_1$ with the set of maps $\{\phi_1\circ \phi_b: b\in\D\}$ (which is a uniformly contracting family), and the resulting invariant set is unchanged by this operation.
    We will ignore this problem and just speak of the maps $\{\phi_b\}_{b\in\D}$ as uniformly contracting even when $1\in\D$.
\end{remark}
\begin{remark}
    Given a finite set of contracting Lipschitz maps $\{f_1,\ldots,f_m\}$, the transformation $K\mapsto f_1(K) \cup\cdots\cup f_m(K)$ maps compact sets to compact sets, and is in fact a Lipschitz contraction on the space of compact subsets of $XR$ with the Hausdorff metric, in which case the maximal invariant set $\Lambda = f_1(\Lambda) \cup\cdots\cup f_n(\Lambda)$ is the unique non-empty compact set with this property.

    However, if there are infinitely many maps, then $\Lambda$ need not be compact.
    In the general countably infinite case, the set $\Lambda$ is necessarily analytic, being a continuous image of the Polish space $\mathcal{I}^{\N}$ under the coding map $\pi$.
    However, as far as I am aware, it is not known if $\Lambda$ is necessarily Borel.
    In the special case that the images $f_n(X)$ do not overlap too much (say, each $x\in X$ is only contained in finitely many images $f_n(X)$), then $\Lambda$ is a $F_{\sigma\delta}$ subset of $X$.
    This is the case for the sets $\Lambda_{\D}$.
\end{remark}
It is convenient, when working with repeated compositions, to have some shorthand.
We denote by $\mathcal{I}^* = \bigcup_{n=0}^\infty\mathcal{I}^n$ all of the finite words of length $n$; we denote by $\varnothing$ the unique word of length $0$.
Elements of $\mathcal{I}^*$ will be denoted in type-writer font, like $\mtt{i}$ or $\mtt{j}$.
The space $\mathcal{I}^*$ is equipped with the operation of concatenation.
For $\mtt{i} = (i_1,\ldots,i_n)\in\mathcal{I}^*$, we write
\begin{equation*}
    f_{\mtt{i}} = f_{i_1}\circ\cdots\circ f_{i_n}.
\end{equation*}
We say that $\mtt{i}$ is a prefix of $\mtt{j}$, and write $\mtt{i}\prec\mtt{j}$, if there is a $\mtt{k}\in\mathcal{I}^*$ such that $\mtt{i}\mtt{k} = \mtt{j}$,
If $\mtt{i}=(i_1,\ldots,i_n) \neq \varnothing$, we also write $\mtt{i}^{-} = (i_1,\ldots,i_{n-1})$; in other words, $\mtt{i}^{-}$ is the unique prefix of $\mtt{i}$ of length $n-1$.

\subsection{Hausdorff and box dimensions}
We now briefly introduce the notions of fractal dimension which will be the most important for us in this note.
The most fundamental notion is the \emph{Hausdorff dimension} which may be defined as follows.
For $0 <\delta \leq \infty$ and $s \geq 0$, we define the subadditive set function
\begin{equation*}
    \mathcal{H}^s_\delta(E) = \inf\left\{\sum_n (\diam A_n)^s: E\subset\bigcup_{n=1}^\infty A_n, \diam A_n \leq \delta\right\}.
\end{equation*}
The \emph{Hausdorff measure} $\mathcal{H}^s(E)\coloneqq\lim_{\delta\to 0}\mathcal{H}^s_\delta(E)$ is a metric outer measure and defines a Borel measure on $\R^d$.
The quantity $\mathcal{H}^s_\infty(E)$ is called \emph{Hausdorff content}.
Hausdorff measure (or equivalently Hausdorff content) can be used to define \emph{Hausdorff dimension}:
\begin{equation*}
    \dimH E = \sup\{s: \mathcal{H}^s_\infty(E) > 0\} = \inf\{s:\mathcal{H}^s(E) < \infty\}
\end{equation*}

Hausdorff dimension is quite measure-theoretically robust.
However, there is also a much conceptually simpler notion of dimension called the \emph{box} or \emph{Minkowski} dimension.
For a bounded subset $E\subset\R^d$ and $r>0$, we let $N_r(E)$ denote the least number of balls of radius $r$ required to cover $E$.
Then the \emph{lower} and \emph{upper box dimensions} are given respectively by
\begin{equation*}
    \dimlB E = \liminf_{r\to 0}\frac{\log N_r(E)}{\log(1/r)}\qquad\text{and}\qquad\dimuB E = \limsup_{r\to 0}\frac{\log N_r(E)}{\log(1/r)}.
\end{equation*}
When the limits coincide, we call the shared value the \emph{box dimension} and denote it by $\dimB E$.
\begin{remark}
    The terminology \emph{Minkowski dimension} originates in an alternative formulation of the box dimension due to Minkowski.
    Given a set $E$, let $E^{(r)}$ denote the open $r$-neighbourhood of $E$ and let $m$ denote Lebesgue measure in $\R^d$.
    Then
    \begin{equation*}
        \vol(E^{(r)}) \approx r^d N_r(E)
    \end{equation*}
    so
    \begin{equation*}
        \dimuB E = \limsup_{r\to 0}\frac{\log (r^{-d} \vol(E^{(r)}))}{\log(1/r)} =d-\liminf_{r\to 0}\frac{\log \vol(E^{(r)})}{\log r}.
    \end{equation*}
    Of course, an analogous formula holds for $\dimlB E$.
    The definition in terms of covers is typically more convenient since it is independent on the embedding in ambient space.
\end{remark}
It is easy to check that
\begin{equation*}
    \dimH E \leq \dimlB E \leq \dimuB E
\end{equation*}
and, for general subsets of $\R^d$, the dimensions can be distinct.
Essentially, the main difference between the Hausdorff and lower box dimensions is that the Hausdorff dimension permits covers using balls of any radius, whereas the lower box dimension only permits covers of a fixed radius $r$.

\subsection{Overview}
We will study the Hausdorff dimension and the lower and upper box dimensions of sets of continued fractions with restricted digits.
More generally, we will work with a general IFS $\{f_i\}_{i\in\mathcal{I}}$ with attractor $\Lambda$, for a finite or countably infinite index set $\mathcal{I}$, acting on a compact subset $X\subset\R^d$.

Of particular interest is the family of maps $\{\phi_b\}_{b\in\D}$ of inverse branches of the Gauss map, and the corresponding restricted digit sets $\Lambda_{\D}$.
The non-linearity of the maps $\phi_b$ results in a decent amount of technical difficulty, which obscures the overall picture of the proof.
Therefore, we will first handle the special case that the maps $f_n$ are in \emph{similarities}, and only later on \cref{ss:conf} explain what needs to be done to handle general conformal maps.
\begin{itemize}[nl]
    \item In \cref{s:h-ub} we provide an introduction to infinitely-generated self-similar sets, and prove results concerning the Hausdorff and upper box dimensions.
        These results are originally due to Mauldin--Williams \cite{zbl:0625.54047} and Mauldin--Urbański \cite{zbl:0852.28005,zbl:0940.28009}.
    \item In \cref{s:scaling} we turn our attention to the lower box dimension, still in the self-similar case.
        These results are originally from \cite{arxiv:2406.12821}.
        It turns out that there is no elegant formula for the lower box dimension, so we will study the more general question of asymptotics of the covering numbers $r\mapsto N_r(\Lambda)$.
        Then, we will use this to say something about the lower box dimension.
        For example, we can obtain a simple classification of when the box dimension of $\Lambda$ exists.
    \item In \cref{s:final} we tie up some loose ends, albeit with less detail than the earlier results.
        Most notably, we discuss the sharpness of the bounds on the lower box dimension, and also provide a summary of the key differences in the self-conformal case and the main ideas of how to handle them.
        We also spend some time discussing other notions of dimension.
\end{itemize}

\subsection{A note on implicit constants}
Given functions $f, g\colon A\to [0,\infty)$ defined on some domain $A$, we write $f \lesssim g$ to indicate that there is an implicit constant $C>0$ such that $f(a) \leq C g(a)$ for all $a\in A$.
If $f\lesssim g$ and $g\lesssim f$, we also write $f\approx g$.

We will fix an IFS $\{f_i\}_{i\in\mathcal{I}}$ defined on a compact subset $X\subset\R^d$.
The implicit constants are permitted to depend on the ambient dimension $d$, the uniform upper bound $\xi$ on the contraction ratio, and $\diam X$.
Any other dependence will be indicated explicitly with a subscript, like $\lesssim_\varepsilon$.

\section{Infinitely-generated self-similar sets}\label{s:h-ub}
\subsection{Self-similar iterated function systems}
Let us begin by introducing the key concepts concerning self-similar iterated function systems.
\begin{definition}
    Let $(X,d)$ be a metric space.
    A map $f\colon X\to X$ is a \emph{similarity} if there is a number $\lambda > 0$ so that for all $x,y\in X$,
    \begin{equation*}
        d(f(x),f(y)) = \lambda d(x,y).
    \end{equation*}
    We call the number $\lambda$ the \emph{similarity ratio}.
\end{definition}
In $\R^d$, similarity maps can always be written in the form $f(x) = \lambda O x + t$ where $O$ is an orthogonal matrix and $t\in\R^d$ is a translation.

A \emph{self-similar IFS} is an IFS $\{f_i\}_{i\in\mathcal{I}}$ where the maps $f_n$ are contracting similarities.
Given $\mtt{i}\in\mathcal{I}^*$, we let $\rho(\mtt{i}) \in (0,1]$ denote the similarity ratio of $f_{\mtt{i}}$.
Observe that $\rho$ is multiplicative: $\rho(\mtt{i}\mtt{j}) = \rho(\mtt{i})\rho(\mtt{j})$.
The quantity $\rho$ is important since $f_{\mtt{i}}(X)$ is essentially a copy of $X$ scaled down by a factor of $\rho(\mtt{i})$.

Assuming that the maps $f_{\mtt{i}}$ are similarity maps is very useful, but without more assumptions it is still rather difficult to understand the dimension theory of the attractor $\Lambda$.
Essentially, the difficulty is that the sets $f_{\mtt{i}}(X)$ may overlap substantially, in which case the geometry of $X$ depends not only on the similarity ratios of the maps $f_{\mtt{i}}$ but also on how the images $f_{\mtt{i}}(X)$ are arranged in space.
Even in the case that $\mathcal{I}$ is finite, this is a very difficult problem and dimension of the attractor is not known in general.
Some of the most general results concerning the Hausdorff dimension in the overlapping case can be found in \cite{zbl:1337.28015,zbl:1426.28024,arxiv:2503.21923,zbmath:7835917,arxiv:2501.17795,zbl:1527.28009}.

In the special case of continued fractions, however, the functions $\phi_b$ were obtained as the inverse branches of the Gauss map, so the sets $\phi_b(\Ir)$ are disjoint for distinct values of $n$.
The following assumption is a close sibling to such a disjointness assumption.

We say that a subset $\mathcal{F}\subset\mathcal{I}^*$ is \emph{mutually incomparable} if $\mtt{i}$ is not a prefix of $\mtt{j}$ for all $\mtt{i},\mtt{j}\in\mathcal{F}$ with $\mtt{i}\neq\mtt{j}$.
\begin{definition}\label{d:bnc}
    We say that the IFS $\{f_i\}_{i\in\mathcal{I}}$ satisfies the \emph{bounded neighbourhood condition} if there exists $M\in\N$ so that for all mutually incomparable $\mathcal{F}\subset\mathcal{I}^*$, for all $x\in X$, and for all $r\in(0,1)$,
    \begin{equation*}
        \#\{\mtt{i}\in\mathcal{F}:\rho(\mtt{i})>r\text{ and }f_{\mtt{i}}(X)\cap B(x,r)\neq\varnothing\}\leq M.
    \end{equation*}
\end{definition}
\begin{example}
    A simple example of an IFS of similarities can be obtained from the \emph{Lüroth expansion} on $[0,1]$.
    For each $n\in\N$, let $g_n\colon[0,1]\to [0,1]$ denote the unique orientation-preserving affine map which maps $[0,1]$ to the interval $[1/(n+1), 1/n]$.
    More precisely,
    \begin{equation*}
        g_n(x) = \frac{x}{n(n+1)} + \frac{1}{n+1}.
    \end{equation*}
    The IFS $\{g_n\}_{n=1}$ is the set of inverse branches of the transformation
    \begin{equation*}
        T(x) = \left\lfloor\frac{1}{x}\right\rfloor\left(\left\lfloor\frac{1}{x}\right\rfloor+1\right) x - \left\lfloor\frac{1}{x}\right\rfloor
    \end{equation*}
    and produces expansions of the form
    \begin{equation*}
        x = \frac{1}{a_1} + \frac{1}{a_1(a_1-1)a_2}+\cdots+\frac1{a_n\cdot\prod_{j=1}^{n-1}a_j(a_j-1)} + \cdots.
    \end{equation*}
    This is a linearized version of the Gauss map, with the branches flipped.
    
    This IFS satisfies the bounded neighbourhood condition with constant $3$ since, for a mutually incomparable family $\mathcal{F}$, the sets $f_{\mtt{i}}((0,1))$ for $\mtt{i}\in\mathcal{F}$ are disjoint open intervals of width $\rho(\mtt{i})$.
\end{example}

\subsection{Hausdorff dimension of the attractor}
When $\mathcal{I}$ is a finite index set and $\{f_i\}_{i\in\mathcal{I}}$ is an IFS of similarity maps satisfying the bounded neighbourhood condition, it is well-known that the attractor $\Lambda$ satisfies
\begin{equation*}
    \dimH\Lambda = s \qquad\text{where}\qquad \sum_{i\in\mathcal{I}}\rho(i)^s = 1.
\end{equation*}
The upper bound is immediate, since the family of sets $\{f_{\mtt{i}}(X)\}_{\mtt{i}\in\mathcal{I}^{\N}}$ is a cover for $\Lambda$ with
\begin{align*}
    \sum_{\mtt{i}\in\mathcal{I}^{n}}(\diam f_{\mtt{i}}(X))^s
    &= (\diam X)^s \sum_{\mtt{i}\in\mathcal{I}^n}\rho(\mtt{i})^s\\
    &= (\diam X)^s \left(\sum_{i\in\mathcal{I}}\rho(i)^s\right)^n\\
    &= (\diam X)^s\\
    &< \infty.
\end{align*}
The lower bound requires slightly more work, but argument is essentially as follows.
Using the equation $\sum_{i\in\mathcal{I}}\rho(i)^s = 1$, define a product measure on the space $\bm{p}^{\N}$ on $\mathcal{I}^{\N}$ where $\bm{p}$ is the measure on $\mathcal{I}$ which assigns mass $\bm{p}(i) = \rho(i)^s$.
Let $\mu$ denote the pushforward of $\bm{p}^{\N}$ under the coding map $\pi$.
Then, using the bounded neighbourhood condition, check that $\mu$ is well-distributed, in that $\mu(B(x,r)) \leq C r^s$ for all $0<r\leq \diam X$ and $x\in\Lambda$.
Finally, we can use $\mu$ to obtain a lower bound on the $s$-cost of any cover of $\Lambda$: if $\Lambda \subset \bigcup_{n=1}^\infty A_n$, then
\begin{equation*}
    1 = \mu(\Lambda) \leq \sum_{n=1}^\infty \mu(B(x_n, \diam A_n)) \leq C \sum_{n=1}^\infty (\diam A_n)^s
\end{equation*}
where $x_n \in A_n$.
Since $A_n$ was an arbitrary cover, it follows that $\mathcal{H}^s(\Lambda) > 0$ and therefore $\dimH\Lambda \geq s$.

When $\mathcal{I}$ is infinite, we immediately run into an issue: the equation $\sum_{i\in\mathcal{I}}\rho(i)^s = 1$ may not have a solution!
Regardless, we can still say something useful.
Write
\begin{equation*}
    P(t) = \log \sum_{i\in\mathcal{I}}\rho(i)^t.
\end{equation*}
We call $P$ the \emph{pressure function} corresponding to the IFS.
Then, we set
\begin{equation}\label{e:h-def}
    h \coloneqq \inf\{t \geq 0: P(t) < 0\}.
\end{equation}
The exact same argument as in the finite case shows that $\dimH\Lambda \leq h$.
In fact, for all $\varepsilon > 0$,
\begin{equation}\label{e:total-bound}
    \sum_{\mtt{i}\in\mathcal{I}^*}\rho(\mtt{i})^{h + \varepsilon} = \sum_{n=0}^\infty \left(\sum_{i\in\mathcal{I}}\rho(i)^{h+\varepsilon}\right)^n \lesssim_\varepsilon 1.
\end{equation}

On the other hand, it need not hold that $P(h) = 0$.
It can happen that $P(t) = +\infty$ for small values of $t$, and the moment $P(t)$ is finite it already takes negative values bounded away from $0$.
To this extent, we write
\begin{equation*}
    \Omega = \{t\geq 0: P(t) <\infty\}\qquad\text{and}\qquad \eta = \inf \Omega.
\end{equation*}
We call $\eta$ the \emph{finiteness parameter}.
Clearly $0 \leq \eta \leq h$, and $\eta = 0$ if and only if $\mathcal{I}$ is finite.
Since $P$ is strictly decreasing, either $\Omega = (\eta,\infty)$ or $\Omega = [\eta,\infty)$.

Moreover, one can check that $P$ is a continuous and convex function on $\Omega$.
In particular, if for instance $\eta < h$, then necessarily $P(h) = 0$.

If $P(h) = 0$, one can (with rather more work) develop a similar theory as in the finite case $\mathcal{I}$ and define a well-distributed measure $\mu$ on the attractor $\Lambda$, and with that obtain a lower bound for the Hausdorff dimension of $\Lambda$.
This is done in detail in \cite{zbl:0852.28005}.
But if one is only interested in the Hausdorff dimension, it turns out that one only requires the results in the finite case to complete the proof.

Let $\Fin(\mathcal{I})$ denote the set of all finite subsets of $\mathcal{I}$.
Moreover, for $\varnothing\neq\mathcal{F}\subset\mathcal{I}$, we let $\Lambda_{\mathcal{F}}$ denote the attractor of the IFS $\{f_i\}_{i\in\mathcal{F}}$.
The following result is \cite[Theorem~3.15]{zbl:0852.28005}.
\begin{theorem}
    Let $\{f_i\}_{i\in\mathcal{I}}$ be an IFS of similarities satisfying the bounded neighbourhood condition, with pressure function $P(t)$ and attractor $\Lambda$.
    Then
    \begin{equation*}
        \dimH\Lambda = \inf\{t \geq 0: P(t) < 0\} = \sup\{\dimH \Lambda_{\mathcal{F}}: \mathcal{F}\in\Fin(\mathcal{I})\}.
    \end{equation*}
\end{theorem}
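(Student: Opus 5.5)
We prove the chain of inequalities
\begin{equation*}
    \sup\{\dimH \Lambda_{\mathcal{F}}: \mathcal{F}\in\Fin(\mathcal{I})\} \leq \dimH\Lambda \leq h \leq \sup\{\dimH \Lambda_{\mathcal{F}}: \mathcal{F}\in\Fin(\mathcal{I})\},
\end{equation*}
where $h$ is as in \cref{e:h-def}. The first inequality is monotonicity of Hausdorff dimension, since $\Lambda_{\mathcal{F}}\subset\Lambda$ for every $\mathcal{F}\subset\mathcal{I}$. The second inequality is the covering argument already given: for every $\varepsilon>0$ and every $n$, the family $\{f_{\mtt{i}}(X)\}_{\mtt{i}\in\mathcal{I}^n}$ covers $\Lambda$. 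Its $(h+\varepsilon)$-cost is $(\diam X)^{h+\varepsilon}e^{nP(h+\varepsilon)}$, which is bounded since $P(h+\varepsilon)<0$. The mesh is at most $\xi^n\diam X\to 0$, so $\mathcal{H}^{h+\varepsilon}(\Lambda)<\infty$ and hence $\dimH\Lambda\leq h+\varepsilon$.

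The third inequality is the substantive one. Each finite subsystem $\{f_i\}_{i\in\mathcal{F}}$ is a finite self-similar IFS. It inherits the bounded neighbourhood condition, since mutually incomparable subsets of $\mathcal{F}^*$ are mutually incomparable subsets of $\mathcal{I}^*$. The classical finite result recalled above therefore gives $\dimH\Lambda_{\mathcal{F}} = s_{\mathcal{F}}$, where $P_{\mathcal{F}}(s_{\mathcal{F}})=0$ and $P_{\mathcal{F}}(t) = \log\sum_{i\in\mathcal{F}}\rho(i)^t$.

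Now fix $t<h$; we want a finite $\mathcal{F}$ with $s_{\mathcal{F}}\geq t$. By definition of $h$ and monotonicity of $P$, we have $P(t)\geq 0$ (possibly $+\infty$). There are two cases.
\begin{itemize}
    \item If $P(t)>0$, then $\sum_{i\in\mathcal{I}}\rho(i)^t>1$. The sum has nonnegative terms, so some finite partial sum already exceeds $1$. Taking $\mathcal{F}$ to be the index set of that partial sum gives $P_{\mathcal{F}}(t)>0$. Since $P_{\mathcal{F}}$ is continuous and strictly decreasing with $P_{\mathcal{F}}(s_{\mathcal{F}})=0$, this forces $s_{\mathcal{F}}>t$.
    \item If $P(t)=0$, pick $t'<t$. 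Strict monotonicity gives $P(t')>0$, and we are back in the first case, obtaining $s_{\mathcal{F}}>t'$.
\end{itemize}
Letting $t\uparrow h$ yields $\sup_{\mathcal{F}}s_{\mathcal{F}}\geq h$. The case $h=0$ is trivial.

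The only step needing care is the strict monotonicity of $P$ where it is finite. This follows because $\rho(i)\leq\xi<1$, so each term $\rho(i)^t$ is strictly decreasing in $t$. When $P(t')=+\infty$ the first case applies directly, so no finiteness is needed there. I expect no genuine obstacle; the main point is simply that the lower bound is reduced entirely to the finite case via finite truncations of the series, thereby avoiding the construction of a conformal measure on the infinite system.
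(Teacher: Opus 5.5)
Your proof is correct and follows essentially the same route as the paper: the upper bound comes from the level-$n$ cylinder covers, and the lower bound comes from truncating $\sum_{i}\rho(i)^t>1$ to a finite subfamily and invoking the finite-IFS dimension formula. Your separate case $P(t)=0$ is harmless but vacuous. Strict monotonicity of $P$ already forces $P(t)>0$ for every $t<h$, and the paper uses exactly this to go straight to $\sum_i\rho(i)^t>1$.
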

\begin{proof}
    We already saw that $\dimH\Lambda \leq h$.
    Since $\Lambda_{\mathcal{F}}\subset\Lambda$ for all $\mathcal{F}\in\Fin(\mathcal{I})$, it remains to show that
    \begin{equation*}
        h \leq \sup\{\dimH \Lambda_{\mathcal{F}}: \mathcal{F}\in\Fin(\mathcal{I})\}.
    \end{equation*}
    If $h = 0$ there is nothing to prove; otherwise, let $0 < s < h$ be arbitrary so that $P(s) > 0$ (note, perhaps, that $P(s) = +\infty$).
    Equivalently, $\sum_{i\in\mathcal{I}}\rho(i)^s > 1$.
    By the usual properties of summation, there is a finite subset $\mathcal{F}\subset\mathcal{I}$ such that
    \begin{equation*}
        \sum_{i\in\mathcal{F}}\rho(i)^s > 1.
    \end{equation*}
    But recalling the formula for $\dimH\Lambda_{\mathcal{F}}$, this means that $\dimH\Lambda_{\mathcal{F}} \geq s$.
    Since $s < h$ was arbitrary, the proof is complete.
\end{proof}
To summarize, the sets $f_{\mtt{i}}(X)$ provide natural families of covers, and the covers obtained in this way are optimal from the perspective of Hausdorff dimension.

\subsection{Upper box dimension of the attractor}
Now that we have a good picture of the Hausdorff dimension, let us turn our attention to the upper box dimension.
We continue to write $h = \inf\{t \geq 0: P(t) < 0\}$.

Unlike the Hausdorff dimension, the upper box dimension can be strictly greater than $0$ even for countable sets.
For example, it is a good exercise to verify that
\begin{equation*}
    \dimB \{n^{-1} : n\in\N\} = \frac{1}{2}.
\end{equation*}
The IFS of similarities $\{f_i\}_{i\in\mathcal{I}}$ contains a countable subset which may be large from the perspective of the upper box dimension.
For each map $f_n$, let $x_n$ denote the unique fixed point, and let $F = \{x_i : i\in\mathcal{I}\}$.
Certainly, $F\subset \Lambda$.
Moreover, it could happen that $\dimuB F > 0$, and this is an obstruction to the upper box dimension:
\begin{equation}\label{e:dimb-form}
    \dimuB \Lambda \geq \max\{h, \dimuB F\}.
\end{equation}
This seems like a rather trivial obstruction, but let us note two facts.
Firstly, one must also contend with images $f_{\mtt{i}}(F)$ for $\mtt{i}\in\mathcal{I}^*$, and this family of sets is rather more substantial.
Secondly, since $\dimuB\Lambda \cap V = \dimuB\Lambda$ for any open set $V$ intersecting $\Lambda$, by a Baire category argument, it in fact holds that $\dimuB\Lambda = \dimP\Lambda$, where $\dimP\Lambda$ is the packing dimension which (for our purposes) can be obtained as countably stabilized upper box dimension:
\begin{equation*}
    \dimP E = \inf\{\max_i \dimuB E_i: E\subset\bigcup_{i=1}^\infty E_i\}.
\end{equation*}
Since the packing dimension is countably stable, $\dimP F = 0$, but regardless $\dimP\Lambda \geq \dimuB F$.
\begin{remark}
    The precise choice of $F$ as the set of fixed points of the maps $f_i$ is convenient for the proofs, but it is not essential.
    For example, one could take any point $x_i \in f_i(X)$ for $i\in\mathcal{I}$ and let $F = \{x_i:i\in\mathcal{I}\}$.
    See \cref{ss:approx} for more detail.
\end{remark}

The following result was first proven in \cite{zbl:0940.28009}, with a somewhat different inductive proof.
We present an alternative proof since (at least in my opinion) the proof is more transparent, and moreover the technique generalizes more easily when we handle the lower box dimension.
\begin{theorem}\label{t:dimuB-ss}
    Let $\{f_i\}_{i\in\mathcal{I}}$ be an IFS of similarities with attractor $\Lambda$ and fixed point set $F$.
    Then
    \begin{equation*}
        \dimuB\Lambda \leq \max\{h, \dimuB F\}.
    \end{equation*}
    In particular, if $\dimH\Lambda \geq h$ (for instance, if the IFS satisfies the bounded neighbourhood condition), then equality holds.
\end{theorem}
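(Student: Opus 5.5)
Fix $s>\max\{h,\dimuB F\}$ and $\varepsilon>0$ with $s-\varepsilon>\max\{h,\dimuB F\}$. The goal is the bound $N_r(\Lambda)\lesssim_{s} r^{-s}$ for all small $r$; this gives $\dimuB\Lambda\le s$. The idea is to decompose $\Lambda$ along the stopping-time family of words whose images are just barely above scale $r$. Then we cover each piece using the fixed points of the next generation of maps.

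First I will set up the cover. For $r\in(0,1)$ let
\begin{equation*}
    \mathcal{A}_r=\{\mtt{i}\in\mathcal{I}^*:\rho(\mtt{i})> r\}.
\end{equation*}
Every $x=\pi((i_n))\in\Lambda$ has a unique longest prefix $\mtt{i}\in\mathcal{A}_r$ (possibly $\varnothing$), since $\rho(i_1\cdots i_n)\le\xi^n\to0$. Writing $j$ for the next digit, we have $\rho(\mtt{i}j)\le r$, so $x\in f_{\mtt{i}j}(X)$. This set has diameter $\lesssim r$ and contains the point $f_{\mtt{i}}(x_j)$. Hence
\begin{equation*}
    \Lambda\subset\bigcup_{\mtt{i}\in\mathcal{A}_r}\Bigl(f_{\mtt{i}}(F)\Bigr)^{(Cr)},
\end{equation*}
where each $f_{\mtt{i}}(F)$ is understood as the set of points $f_{\mtt{i}}(x_j)$, and the parameter $j$ ranges over all of $\mathcal{I}$. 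It is harmless that all $j$ are allowed, since this only enlarges the cover. Consequently
\begin{equation*}
    N_r(\Lambda)\lesssim\sum_{\mtt{i}\in\mathcal{A}_r}N_{r}(f_{\mtt{i}}(F))=\sum_{\mtt{i}\in\mathcal{A}_r}N_{r/\rho(\mtt{i})}(F),
\end{equation*}
using that $f_{\mtt{i}}$ is a similarity of ratio $\rho(\mtt{i})$.

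Next I bound each term. Since $s-\varepsilon>\dimuB F$, there is a constant $C_s$ with $N_\delta(F)\le C_s\delta^{-(s-\varepsilon)}$ for all $\delta\in(0,1]$. The ratio $r/\rho(\mtt{i})$ lies in $(0,1)$ for $\mtt{i}\in\mathcal{A}_r$, so
\begin{equation*}
    N_r(\Lambda)\lesssim_s r^{-(s-\varepsilon)}\sum_{\mtt{i}\in\mathcal{A}_r}\rho(\mtt{i})^{s-\varepsilon}.
\end{equation*}
Since $s-\varepsilon>h$, the series $\sum_{\mtt{i}\in\mathcal{I}^*}\rho(\mtt{i})^{s-\varepsilon}$ converges by \eqref{e:total-bound}. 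The sum over $\mathcal{A}_r$ is therefore bounded by a constant independent of $r$. This gives $N_r(\Lambda)\lesssim_s r^{-(s-\varepsilon)}\le r^{-s}$, and hence $\dimuB\Lambda\le s$. Letting $s$ decrease to $\max\{h,\dimuB F\}$ gives the upper bound.

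The main obstacle I anticipate is the endpoint issue: we need a strict inequality $s-\varepsilon>h$ in order to apply \eqref{e:total-bound}. The exponent $t=h$ itself can fail, because $P(h)$ may equal $0$. This is exactly why the argument works with exponents strictly above the maximum and does not try to prove a bound $N_r\lesssim r^{-\max}$. A second point is that a naive sum over all of $\mathcal{I}^*$ could seem to overcount. This is harmless, since overcounting only makes the upper bound weaker and the sum still converges.

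For the equality statement, the lower bound is \eqref{e:dimb-form}. We have $\dimuB\Lambda\ge\dimuB F$ since $F\subset\Lambda$. We also have $\dimuB\Lambda\ge\dimH\Lambda\ge h$ by assumption. Under the bounded neighbourhood condition, the theorem of the previous subsection gives $\dimH\Lambda=h$. Combining these with the upper bound gives equality.
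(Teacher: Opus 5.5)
Your proof is correct and follows the same route as the paper. You reduce to $N_r(\Lambda)\lesssim\sum_{\rho(\mtt{i})>r}N_{r/\rho(\mtt{i})}(F)$, bound each term using $\dimuB F$, and sum using \cref{e:total-bound}. The differences are cosmetic: you get the cover from the stopping-time argument that the paper itself later gives in \cref{l:cov-alt} rather than by iterating the self-similarity relation, and you use a single exponent $s-\varepsilon>\max\{h,\dimuB F\}$ where the paper uses the $\theta_{\mtt{i}}(r)$ interpolation, which is a slightly cleaner way to reach the same bound.
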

\begin{proof}
    Let $\varepsilon > 0$ be arbitrary and let $r > 0$ be small.
    We will provide an upper bound for $N_r(\Lambda)$.

    First, iterate the self-similarity relation:
    \begin{equation*}
        N_r(\Lambda) = N_r\left(\bigcup_{i\in\mathcal{I}}f_i(\Lambda)\right).
    \end{equation*}
    At this point, we cannot swap the covering number $N_r$ and the union, since the union is infinite.
    However, we can split the indices in $\mathcal{I}$ into two families.
    If $\rho(i) < r$, since $f_i(\Lambda) \cap F \neq \varnothing$, $f_i(\Lambda)\subset F^{(r\cdot\diam X)}$.
    Moreover, since we are in $\R^d$, there is a constant $C > 0$ so that $N_\delta(E^{(\delta)}) \leq C N_\delta(E)$ for any set $E\subset\R^d$ and $\delta > 0$.
    For the terms $\rho(i) > r$, at least heuristically, swapping the union and covering number is not too lossy.
    Therefore, we decompose
    \begin{align*}
        N_r(\Lambda) &\leq N_r\left(\bigcup_{\substack{i\in\mathcal{I}\\\rho(i) N r}}f_i(\Lambda)\right) + \sum_{\substack{i\in\mathcal{I}\\\rho(i) \geq r}} N_r(f_i(\Lambda))\\
                     & \leq N_r(F^{(r\cdot\diam X)}) + \sum_{\substack{i\in\mathcal{I}\\\rho(i) \geq r}} N_r(f_i(\Lambda))\\
                     & \leq C N_r(F) + \sum_{\substack{i\in\mathcal{I}\\\rho(i) \geq r}} N_{r \rho(i)^{-1}}(\Lambda).
    \end{align*}
    In the last line, we used self-similarity: the set $f_i(\Lambda)$ is a self-similar copy of $\Lambda$ so $N_r(f_i(\Lambda)) = N_{r \rho(i)^{-1}}(\Lambda)$.

    Now, we can apply the exact same argument to each set $N_{r \rho(i)^{-1}}(\Lambda)$:
    \begin{align*}
        N_r(\Lambda) &\leq C N_r(F) + \sum_{\substack{i\in\mathcal{I}\\\rho(i) \geq r}} \left(C N_{r\rho(i)^{-1}} + \sum_{\substack{j\in\mathcal{I}\\\rho(j) \geq r \rho(i)^{-1}}}N_{r \rho(ij)^{-1}}\right)\\
                     &= CN_r(F) + \sum_{\substack{i\in\mathcal{I}\\\rho(i) \geq r}}C N_{r\rho(i)^{-1}}(F) + \sum_{\substack{\mtt{i}\in\mathcal{I}^2\\\rho(\mtt{i}) \geq r}} N_{r\rho(\mtt{i})^{-1}}(\Lambda).
    \end{align*}
    Now, recall that the IFS is uniformly contracting, so $\rho(\mtt{i}) \leq \xi^k$ for $\mtt{i}\in\mathcal{I}^k$.
    In other words, if we iterate the above procedure $k$ times where $k$ is sufficiently large so that $\xi^k < r$, the final sum becomes empty and we obtain the general upper bound
    \begin{equation}\label{e:cov}
        N_r(\Lambda) \leq C \sum_{\substack{\mtt{i}\in\mathcal{I}^*\\\rho(\mtt{i}) \geq r}}N_{r \rho(\mtt{i})^{-1}}(F).
    \end{equation}
    So, we have replaced the question of covering $\Lambda$ with a problem of bounding the covering numbers of many copies of $F$ at many different scales.
    In order to bound the above sum, we find a new representation of the sum in a more convenient form.
    Define
    \begin{equation}\label{e:theta-def}
        \theta_{\mtt{i}}(r) = 1 - \frac{\log \rho(\mtt{i})}{\log r}.
    \end{equation}
    Equivalently, $\theta_{\mtt{i}}(r)$ is chosen so that $r^{\theta_{\mtt{i}}(r)} = r \rho(\mtt{i})^{-1}$.
    Note that $\theta_{\mtt{i}}(r) \in [0,1]$, and therefore
    \begin{equation}\label{e:lossy}
        \begin{aligned}
            N_{r\rho(\mtt{i})^{-1}}(F) &\lesssim_\varepsilon \left(\frac{\rho(\mtt{i})}{r}\right)^{\dimuB F + \varepsilon}\\
                                       &= \rho(\mtt{i})^{h + \varepsilon}\left(\frac{1}{r}\right)^{\theta_{\mtt{i}}(r) (\dimuB F + \varepsilon) + (1-\theta_{\mtt{i}}(r))(h + \varepsilon)}\\
                                       &\leq \rho(\mtt{i})^{h + \varepsilon} \left(\frac{1}{r}\right)^{\max \{h, \dimuB F\} + \varepsilon}.
        \end{aligned}
    \end{equation}
    Substituting this back into \cref{e:cov} and then using \cref{e:total-bound},
    \begin{align*}
        N_r(\Lambda) &\lesssim_\varepsilon\sum_{\substack{\mtt{i}\in\mathcal{I}^*\\\rho(\mtt{i}) \geq r}}\rho(\mtt{i})^{h + \varepsilon}\left(\frac{1}{r}\right)^{\max \{h, \dimuB F\} + \varepsilon}\\
                     &\leq\left(\frac{1}{r}\right)^{\max \{h, \dimuB F\} + \varepsilon}\sum_{\mtt{i}\in\mathcal{I}^*}\rho(\mtt{i})^{h+\varepsilon}\\
                     & \lesssim_\varepsilon\left(\frac{1}{r}\right)^{\max \{h, \dimuB F\} + \varepsilon}.
    \end{align*}
    Therefore $\dimuB\Lambda \leq \max\{h, \dimuB F\} + \varepsilon$.
\end{proof}

\section{Lower box dimension and asymptotics for covering numbers}\label{s:scaling}
We now turn our attention to the lower box dimension.
Unlike the upper box dimension, it turns out that the case of the lower box dimension is quite a bit more subtle.

Let us begin with a simple bound, which only uses our results on the Hausdorff and upper box dimensions:
\begin{equation}\label{e:triv}
    \max\{h, \dimlB F\} \leq \dimlB\Lambda \leq \dimuB\Lambda = \max\{h, \dimuB F\}.
\end{equation}
In particular, this immediately provides two trivial mechanisms to guarantee that the box dimension exists:
\begin{enumerate}[nl,r]
    \item We have $h \geq \dimuB F$, in which case $\dimB\Lambda = h$.
        In other words, the Hausdorff dimension is sufficiently large that we can simply ignore the set of fixed points.
    \item We have $\dimlB F = \dimuB F$, in which case $\dimB\Lambda = \max\{h, \dimB F\}$.
        This case occurs when the fixed point set itself is sufficiently regular.
\end{enumerate}
Perhaps surprisingly, it turns out that these are the only two mechanisms under which the box dimension can exist.
In fact, we will prove the following result: $\dimlB \Lambda = \dimuB \Lambda$ if and only if $\max\{h, \dimlB F\} = \max\{h, \dimuB F\}$.
This was first proven in \cite{arxiv:2406.12821}.

Unfortunately, there is a problem: it turns out that there is no simple formula for $\dimlB\Lambda$.
More precisely, there is no formula for $\dimlB\Lambda$ which depends only on $h$, $\dimlB F$, and $\dimuB F$.

Let us briefly try to understand the difficult by inspecting the proof of \cref{t:dimuB-ss}, and trying to substitute the lower box dimension in place of the upper box dimension.
Heuristically speaking, \cref{e:cov} is not so lossy: below, we will give a different shorter proof of a slight generalization of \cref{e:cov} below in \cref{l:cov-alt}, which is clearly optimal.
The main loss occurs in the first inequality in \cref{e:lossy} if $\dimlB F < \dimuB F$.
Moreover, the optimal exponent here depends on the value of $\theta_{\mtt{i}}(r)$, which could take (essentially) any value in the interval $[0,1]$.
So, to understand the covering number at scale $r$, we need to know something about the covering numbers of $F$ at all scales between $r$ and $1$.

This motivates us to instead try to answer the following more general question: can we obtain an asymptotic formula for the covering numbers $r\mapsto N_r(\Lambda)$ given the Hausdorff dimension $h$ and the covering numbers $r\mapsto N_r(F)$?
We are asking for quite a bit more information about $\Lambda$, but in exchange we receive quite a bit more information about $F$.
It will turn out that this question has a concrete answer, and we can use the concrete answer to this question to provide sharp bounds on the lower box dimension (in terms of $h$, $\dimlB F$, and $\dimuB F$) which in turn provides the classification of existence of the box dimension of $\Lambda$.

\subsection{Branching functions}\label{ss:br}
Let us first briefly introduce a formalism around the covering numbers $r\mapsto N_r(F)$.
We begin with a normalization which is very convenient.
\begin{definition}
    Given a set $E\subset\R^d$, the \emph{branching function} of $E$ is the function $\beta_E\colon [0,\infty)\to [0,\infty)$ given by
    \begin{equation*}
        \beta_E(u) = \log N_{2^{-u}}(E).
    \end{equation*}
\end{definition}
Here, the logarithm is in base $2$.
The reason for choosing this normalization of the covering number is that the function $\beta_E$ is essentially an increasing $d$-Lipschitz function with $\beta_E(0) = 0$.
The $d$-Lipschitz property occurs for the following reason.
Given $u,v \geq 0$, we need to upper bound $N_{2^{-(u+v)}}(E)$ in terms of $N_{2^{-u}}(E)$.
First fix an optimal cover $\{B(x_1, 2^{-u}), \ldots, B(x_n, 2^{-u})\}$ for $E$, where $n = N_{2^{-u}}(E)$.
Then, working in $\R^d$, we can cover each ball $B(x_j, 2^{-u})$ with $C_d 2^{dv}$ balls of radius $2^{-(u+v)}$, where $C_d$ is some constant depending only on $d$.
Therefore
\begin{equation}\label{e:approx-lip}
    \beta_E(u+v) = \log N_{2^{-(u+v)}}(E) \leq \log \left(C_d 2^{dv} N_{2^{-u}}(E)\right) \leq \beta_E(u) + dv + \log C_d.
\end{equation}
In other words, $\beta_E$ is $d$-Lipschitz up to a fixed additive error term.

Here is a formalization which makes it slightly easier to work with the error terms.
Let $\mathcal{B}(d)$ denote the increasing $d$-Lipschitz functions $g\colon[0,\infty)\to[0,\infty)$ with $g(0) = 0$.
\begin{proposition}\label{p:approx-br}
    For all $d\in\N$, there is a constant $M_d \in\R$ such that the following holds:

    Let $E\subset\R^d$.
    Then there exists $g\in\mathcal{B}(d)$ so that for all $u\geq 0$,
    \begin{equation*}
        |\beta_E(u) - g(u)| \leq M_d + d\log(1 + \diam E).
    \end{equation*}
\end{proposition}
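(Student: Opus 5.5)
The idea is to compare $\beta_E$ with the smallest function in $\mathcal{B}(d)$ lying above it, after first fixing two defects: $\beta_E$ need not be $d$-Lipschitz on the nose, and $\beta_E(0)$ need not vanish when $\diam E$ is large. Write $A = \log C_d$ for the additive constant in \cref{e:approx-lip}; it depends only on $d$. Three elementary properties of $\beta_E$ will be used.
\begin{itemize}
    \item It is increasing, since $N_r(E)$ is non-increasing in $r$.
    \item It satisfies $\beta_E(u+v)\le \beta_E(u)+dv+A$ for all $u,v\ge 0$, which is exactly \cref{e:approx-lip}.
    \item Its initial value is controlled: $0\le \beta_E(0)\le A'+d\log(1+\diam E)$. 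This holds because $E$ lies in a ball of radius $\diam E$, which can be covered by at most $C_d'(1+\diam E)^d$ balls of radius $1$.
\end{itemize}
(If $E=\varnothing$ we simply take $g=0$.)

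We define
\begin{equation*}
    g(u) = \sup_{0\le t\le u}\bigl(\beta_E(t) - \beta_E(0) + d(u-t)\bigr)\wedge \inf_{t\ge u}\bigl(\beta_E(t)-\beta_E(0)\bigr)\vee 0.
\end{equation*}
A cleaner route is to take $\tilde g(u) = \inf_{t\ge 0}\bigl(\beta_E(t) + d|u-t|\bigr)$ and set $g(u) = \max\{\tilde g(u) - \tilde g(0), 0\}$. The key step is to verify that $g\in\mathcal{B}(d)$ and that $|\tilde g - \beta_E|$ is bounded by a constant depending only on $d$.

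The Lipschitz property is standard: an infimum of $d$-Lipschitz functions of $u$ is $d$-Lipschitz, so $\tilde g$ is $d$-Lipschitz. Hence $g$ is $d$-Lipschitz with $g(0)=0$ and $g\ge 0$.

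Monotonicity needs an argument. I would instead use the one-sided infimum
\begin{equation*}
    \tilde g(u) = \inf_{t\le u}\bigl(\beta_E(t) + d(u-t)\bigr),
\end{equation*}
where $t$ ranges over $[0,u]$. This is the largest $d$-Lipschitz-from-the-left minorant. To see that it is increasing: for $u<u'$, each term with $t\le u$ only grows when $u$ is replaced by $u'$, while for $t\in(u,u']$ we have $\beta_E(t)\ge\beta_E(u)\ge\tilde g(u)$ because $\beta_E$ is increasing. To see that it is $d$-Lipschitz: the same comparison gives $\tilde g(u')\le\tilde g(u)+d(u'-u)$.

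For the error bound, taking $t=u$ shows $\tilde g\le\beta_E$. Conversely, \cref{e:approx-lip} gives $\beta_E(u)\le\beta_E(t)+d(u-t)+A$ for every $t\le u$, so $\tilde g\ge\beta_E-A$. Finally, $\tilde g(0)=\beta_E(0)$, so $g = \tilde g-\beta_E(0)$; this is nonnegative because $\tilde g$ is increasing. Therefore
\begin{equation*}
    |g-\beta_E| \le A+\beta_E(0) \le A + A' + d\log(1+\diam E),
\end{equation*}
and we take $M_d = A + A'$.

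The step I expect to need the most care is the choice of minorant. The two-sided infimum is not obviously increasing, whereas the one-sided one is; so the crux is checking monotonicity and the Lipschitz bound simultaneously for the one-sided construction. This works because the Lipschitz defect of $\beta_E$ only appears in the upward direction, while downward jumps never occur since $\beta_E$ is increasing.
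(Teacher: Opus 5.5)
Your final construction is correct: the one-sided infimum $\tilde g(u)=\inf_{0\le t\le u}\bigl(\beta_E(t)+d(u-t)\bigr)$, shifted to $g=\tilde g-\beta_E(0)$. It reaches the result by a somewhat different route from the paper.

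The paper defines $g$ abstractly as the greatest element of $\mathcal{B}(d)$ lying below $\beta_E$. Membership in $\mathcal{B}(d)$ is then free, since $\mathcal{B}(d)$ is closed under suprema. It bounds $g(u)$ from below pointwise by a test function $\psi_u$ of slope $d$ on $[0,u]$, constant afterwards, with $\psi_u(u)=\beta_E(u)-M_d-d\log(1+\diam E)$. Showing $\psi_u\le\beta_E$ uses \cref{e:approx-lip} just as you do. Showing $\max\{0,\psi_u\}\in\mathcal{B}(d)$ needs $\psi_u(0)\le 0$, i.e.\ the growth bound \cref{e:gb} at scale $u$.

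You instead write down the largest increasing $d$-Lipschitz minorant explicitly as an inf-convolution. So you must verify monotonicity and the Lipschitz bound by hand, which you do correctly. In return, the normalisation $g(0)=0$ becomes a single vertical shift. The error then splits transparently into the Lipschitz defect $A$ plus the initial offset $\beta_E(0)=\log N_1(E)$, and $\diam E$ enters only through the latter. The paper's route, in exchange, produces the canonical pointwise-maximal representative, which dominates yours (your $g$ lies in $\mathcal{B}(d)$ and below $\beta_E$).

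In a write-up, drop the two abandoned formulas. Your concern about the two-sided infimum was unfounded anyway: for $t\ge u$ one has $\beta_E(t)+d(t-u)\ge\beta_E(u)$, so it coincides with the one-sided version. Also, $g=0$ does not handle $E=\varnothing$, since there $\beta_E\equiv-\infty$; simply assume $E$ is non-empty and bounded, as the statement implicitly does.
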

\begin{proof}
    Let $C_d > 0$ be chosen so that
    \begin{equation}\label{e:gb}
        N_{2^{-u}}(E) \leq C_d(\diam E)^d 2^{ud}
    \end{equation}
    and \cref{e:approx-lip} holds.
    Let $M_d = \log C_d$.
    Now, set
    \begin{equation*}
        g = \sup\{h\in\mathcal{B}(d): h \leq \beta_E\}.
    \end{equation*}
    The supremum is non-empty since $\beta_E \geq 0$.
    Moreover, it is easy to check that $\mathcal{B}(d)$ is closed under suprema, so $g\in\mathcal{B}(d)$.

    Now, let $u \geq 0$ be fixed.
    If $\beta_E(u) \leq M_d + d\log(1 + \diam E)$, there is nothing to prove.
    Otherwise, consider the piecewise linear function $\psi_u$ such that:
    \begin{enumerate}[nl,a]
        \item $\psi_u(u) = \beta_E(u) - M_d - d\log(1+\diam E) \geq 0$,
        \item $\psi_u$ has constant slope $d$ on $[0, u]$, and
        \item $\psi_u$ is constant on $[u,\infty)$.
    \end{enumerate}
    By \cref{e:approx-lip} and since $\beta_E$ is increasing, it follows that $\psi_u \leq \beta_E$ on $[0,\infty)$.
    Moreover, recall from \cref{e:gb}
    \begin{equation*}
        \beta_E(u) \leq M_d + d\log(\diam E) + ud.
    \end{equation*}
    In particular, $\psi_u(0) \leq 0$.
    Thus $\max\{0, \psi_u\} \in \mathcal{B}(d)$ so
    \begin{equation*}
        g(u) \geq \psi_u(u) = \beta_E(u) - M_d - d\log(1+\diam E).
    \end{equation*}
    Since $u\geq 0$ was arbitrary, the proof is complete.
\end{proof}
We can also choose branching functions which play nicely with the lower and upper box dimensions.
\begin{proposition}\label{p:dimb-bf}
    Let $E\subset\R^d$ and let $s = \dimlB E$ and $t = \dimuB E$.
    Then there exists $g\in\mathcal{B}(d)$ such that $g = \beta_E + o(u)$ and $s u \leq g(u) \leq t u$ for all $u \geq 0$.
\end{proposition}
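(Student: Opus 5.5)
The plan is to start from the approximating function of \cref{p:approx-br} and then clip it between the two linear functions $u\mapsto su$ and $u\mapsto tu$. We may assume $E$ is non-empty and bounded, since otherwise the box dimensions are not meaningful. Then $0\le s\le t\le d$.

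\textbf{Step 1.} By \cref{p:approx-br} there is $g_0\in\mathcal{B}(d)$ with $|\beta_E(u)-g_0(u)|\le C$ for all $u\ge0$, where $C=M_d+d\log(1+\diam E)$. Since $\beta_E(u)=\log N_{2^{-u}}(E)$ and $\log$ is in base $2$, the definitions of $\dimlB$ and $\dimuB$ give
\begin{equation*}
    \liminf_{u\to\infty}\frac{g_0(u)}{u}=s\qquad\text{and}\qquad\limsup_{u\to\infty}\frac{g_0(u)}{u}=t.
\end{equation*}
Here we use that the box dimensions may be computed along the scales $r=2^{-u}$, because $N_r(E)$ is monotone in $r$. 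We also use that the bounded error $C$ disappears after dividing by $u$.

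\textbf{Step 2.} Define
\begin{equation*}
    g(u)=\min\{tu,\max\{su,g_0(u)\}\}.
\end{equation*}
We check three things.
\begin{itemize}
    \item The linear maps $u\mapsto su$ and $u\mapsto tu$ lie in $\mathcal{B}(d)$, since $0\le s,t\le d$.
    \item Pointwise maxima and minima of increasing $d$-Lipschitz functions vanishing at $0$ are again increasing, $d$-Lipschitz and vanish at $0$, so $g\in\mathcal{B}(d)$.
    \item Since $su\le tu$, we have $su\le g(u)\le tu$ for every $u\ge0$.
\end{itemize}

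\textbf{Step 3.} It remains to show $g=\beta_E+o(u)$, and by Step 1 it suffices to show $g-g_0=o(u)$. Fix $\varepsilon>0$. By Step 1, for all sufficiently large $u$ we have
\begin{equation*}
    (s-\varepsilon)u\le g_0(u)\le(t+\varepsilon)u.
\end{equation*}
For such $u$, $0\le\max\{su,g_0(u)\}-g_0(u)\le\varepsilon u$. Applying the minimum with $tu$ changes the value by at most $\max\{0,g_0(u)-tu\}\le\varepsilon u$. Hence $|g(u)-g_0(u)|\le2\varepsilon u$ for all large $u$. Since $\varepsilon>0$ was arbitrary, $g-g_0=o(u)$, and therefore $g=\beta_E+o(u)$.

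The only point needing care is Step 1, namely identifying the $\liminf$ and $\limsup$ of $g_0(u)/u$ with $s$ and $t$ along dyadic scales. This is routine: for $2^{-u-1}\le r\le2^{-u}$ one has $N_{2^{-u}}(E)\le N_r(E)\le N_{2^{-u-1}}(E)$ and $\log(1/r)/u\to1$. The rest is elementary closure of $\mathcal{B}(d)$ under $\min$ and $\max$.
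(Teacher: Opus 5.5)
Your proposal is correct and takes essentially the same approach as the paper: take the $O(1)$-approximant from \cref{p:approx-br}, clamp it to the range $[su,tu]$, and use the definitions of the lower and upper box dimensions to show that clamping changes it by only $o(u)$. Your extra remarks just make explicit details the paper leaves implicit: that $s,t\le d$ so the linear maps lie in $\mathcal{B}(d)$, that $\mathcal{B}(d)$ is closed under $\min$ and $\max$, and that the box dimensions can be computed along dyadic scales.
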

\begin{proof}
    First apply \cref{p:approx-br} to get $f\in\mathcal{B}(d)$ such that $f = \beta_E + O(1)$, and then set
    \begin{equation*}
        g(u) =\min\{\max\{f(u), su\}, tu\}.
    \end{equation*}
    In other words, $g(u)$ is obtained by clamping $f(u)$ to the desired range $[su, tu]$
    Clearly $g\in\mathcal{B}(d)$; we must show that $g = f + o(u)$.

    Indeed, for any $\varepsilon>0$, for all $u$ sufficiently large by definition of the lower and upper box dimension, $(s-\varepsilon) u \leq f(u) \leq (t+\varepsilon) u$.
    Therefore either $f(u) \in [su, tu]$ in which case $f = g$, or $|f(u) - g(u)|\leq\varepsilon u$.
    Since this holds for all $\varepsilon>0$, it follows that $g = f + o(u)$.
\end{proof}

\subsection{A candidate asymptotic formula}
Recall our goal in this section: given an IFS of similarities $\{f_i\}_{i\in\mathcal{I}}$ satisfying the bounded neighbourhood condition, we wish to give essentially sharp bounds on $\beta_\Lambda(u)$ in terms of $h$ and $\beta_F(u)$.
More precisely, we will provide a formula for the equivalence class $\beta_\Lambda(u) + o(u)$ in terms of $h$ and $\beta_F(u) + o(u)$.
The $o(u)$ error term is acceptable since, for example,
\begin{equation*}
    \dimlB\Lambda = \liminf_{u\to\infty} \frac{\beta_\Lambda(u)}{u}.
\end{equation*}
\begin{remark}
    It is an interesting question to try to determine sharper asymptotics, but as we will see this would likely require (at the very least) additional assumptions on the similarity ratios $\rho(i)$.
    For an explicit formulation, see \cref{q:better-asymp}.
\end{remark}
Our strategy is essentially as follows.
For the upper bound, we will follow the proof of \cref{t:dimuB-ss} but whenever possible input information about $\beta_F$.
Then, we prove a matching lower bound by considering finite subsystems.

In the proof of \cref{t:dimuB-ss}, we first obtained the formula \cref{e:cov} using an inductive argument, and then used \cref{e:cov} along with some algebraic manipulation to obtain the upper bound on the upper box dimension.
It is not too difficult to furnish a direct proof of \cref{e:cov} with less effort.
Given $r \in(0,1)$, define
\begin{equation*}
    F^*(r) = \bigcup_{\substack{\mtt{i}\in\mathcal{I}^*\\\rho(\mtt{i}) \geq r}} f_{\mtt{i}}(F).
\end{equation*}
Of course, $F^*(r)\subset\Lambda$ for all $r\in(0,1)$, and $F^*(r) = F$ for $r$ sufficiently close to $1$.
Essentially as proven in \cref{e:cov}, the sets $F^*(r)$ provide good approximations to $\Lambda$ at scale $r$.
We recall that $E^{(\delta)}$ denotes open $\delta$-neighbourhood of $E$.
\begin{lemma}\label{l:cov-alt}
    Let $\{f_i\}_{i\in\mathcal{I}}$ be an IFS of similarities with attractor $\Lambda$ and fixed point set $F$.
    Then for all $r \in(0,1)$,
    \begin{equation*}
        \Lambda \subset (F^*(r))^{(r\cdot\diam X)}.
    \end{equation*}
\end{lemma}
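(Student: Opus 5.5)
Take an arbitrary point $x\in\Lambda$, write $x=\pi((i_n)_{n=1}^\infty)$ using surjectivity of the coding map, and stop its coding at the first moment the similarity ratio drops below $r$. Concretely, set $\mtt{i}_n=(i_1,\ldots,i_n)$ for $n\geq 0$, so $\rho(\mtt{i}_0)=\rho(\varnothing)=1\geq r$. By uniform contraction, $\rho(\mtt{i}_n)\leq\xi^n\to 0$. Hence there is a largest $n\geq 0$ with $\rho(\mtt{i}_n)\geq r$ and $\rho(\mtt{i}_{n+1})<r$. Write $\mtt{j}=\mtt{i}_{n+1}$, so that $\mtt{j}^-=\mtt{i}_n$ satisfies $\rho(\mtt{j}^-)\geq r$ while $\rho(\mtt{j})<r$.

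Next I produce a point of $F^*(r)$ close to $x$. Let $k=i_{n+1}$ be the last letter of $\mtt{j}$ and $x_k$ the fixed point of $f_k$. Then $y\coloneqq f_{\mtt{j}^-}(x_k)\in f_{\mtt{j}^-}(F)\subset F^*(r)$, since $\rho(\mtt{j}^-)\geq r$. Because $x_k=f_k(x_k)$, we get $y=f_{\mtt{j}}(x_k)\in f_{\mtt{j}}(X)$; the fixed point lies in $X$ since $f_k$ maps $X$ to itself. Also $x\in f_{\mtt{j}}(X)$, since $x=f_{\mtt{j}}(\pi(\sigma^{n+1}(i_m)_m))$, using the identity $\pi = f_{i_1}\circ\pi\circ\sigma$ read off from the nested-intersection definition of $\pi$.

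Therefore $|x-y|\leq\diam f_{\mtt{j}}(X)=\rho(\mtt{j})\diam X<r\diam X$, using that $f_{\mtt{j}}$ is a similarity with ratio $\rho(\mtt{j})<r$. This gives $x\in(F^*(r))^{(r\cdot\diam X)}$. The inequality is strict, which matters because the neighbourhood is open; if $\diam X=0$ the statement is trivial.

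The only delicate point is the bookkeeping at the stopping time. The fixed point has to be attached to the last letter $k$, so that the point lies in the small cylinder $f_{\mtt{j}}(X)$, while the word indexing $F^*(r)$ is the parent $\mtt{j}^-$, which has ratio at least $r$. The identity $f_{\mtt{j}^-}(x_k)=f_{\mtt{j}}(x_k)$ reconciles the two. The edge case $n=0$ also needs a check: there $\mtt{j}^-=\varnothing$ and $y=x_{i_1}\in F\subset F^*(r)$, which works because $\rho(\varnothing)=1\geq r$ for $r\in(0,1)$.
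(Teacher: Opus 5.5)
Your proof is correct and is essentially the paper's argument: you stop the coding of $x$ at the first word $\mtt{j}$ with $\rho(\mtt{j})<r$, and use $f_{\mtt{j}^-}(x_k)=f_{\mtt{j}}(x_k)$ to place a point of $F^*(r)$ in the small cylinder containing $x$. The only differences are cosmetic: the paper works with $f_{\mtt{j}}(\Lambda)$ where you use $f_{\mtt{j}}(X)$, and one quibble is that the degenerate case $\diam X=0$ is not trivial but actually fails, since an open $0$-neighbourhood is empty, so that case must simply be excluded (as the paper implicitly does); this does not affect your argument.
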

\begin{proof}
    Let $x\in\Lambda$ be arbitrary.
    Iterating the self-similarity relation, there is an $\mtt{i}\in\mathcal{I}^*$ with $\rho(\mtt{i}) < r$, $\rho(\mtt{i}^-) \geq r$, and $x\in f_{\mtt{i}}(\Lambda)$.
    Write $\mtt{i} = \mtt{i}^- j$ and let $x_j\in F$ be the unique fixed point of $f_j$.
    Then
    \begin{equation*}
        f_{\mtt{i}^{-}}(x_j) = f_{\mtt{i}^-}(f_j(x_j)) = f_{\mtt{i}}(x_j)\in f_{\mtt{i}}(\Lambda).
    \end{equation*}
    Since $\rho(\mtt{i}^-) \geq r$, $f_{\mtt{i}^-}(x_j) \in F^*(r)$ by definition, so $f_{\mtt{i}}(\Lambda) \cap F^*(r) \neq \varnothing$.
    Since $\rho(\mtt{i}) < r$, $\diam f_{\mtt{i}}(\Lambda) < r \diam X$ so that
    \begin{equation*}
        x\in f_{\mtt{i}}(\Lambda) \subset (F^*(r))^{(r\cdot\diam X)}.
    \end{equation*}
    Since $x\in\Lambda$ was arbitrary, the proof is complete.
\end{proof}
Now, let us try to guess a formula for $\beta_\Lambda$, given $\beta_F$ and $h$.
Using \cref{l:cov-alt} and the fact that the $f_{\mtt{i}}$ are similarities,
\begin{equation*}
    N_r(\Lambda) \lesssim \sum_{\substack{\mtt{i}\in\mathcal{I}^*\\\rho(\mtt{i}) \geq r}} N_{r\rho(\mtt{i})^{-1}}(F).
\end{equation*}
The bounded neighbourhood condition implies this inequality is quite close to being an equality.
Now recall the definition of $\theta_{\mtt{i}} = \theta_{\mtt{i}}(r)$ from \cref{e:theta-def}.
Write $r = 2^{-u}$ for $u \geq 0$, so $r\rho(\mtt{i})^{-1} = 2^{-\theta_{\mtt{i}} u}$.
Taking logarithms and recalling the computation from the proof of \cref{t:dimuB-ss},
\begin{align*}
    \beta_\Lambda(u) &\leq \log\sum_{\substack{\mtt{i}\in\mathcal{I}^*\\\rho(\mtt{i}) \geq r}} N_{r\rho(\mtt{i})^{-1}}(F)= \log\sum_{\substack{\mtt{i}\in\mathcal{I}^*\\\rho(\mtt{i}) \geq r}} \rho(\mtt{i})^h 2^{(1-\theta_{\mtt{i}})\cdot h \cdot u + \beta_F(\theta_{\mtt{i}} u)}\\
                     &\leq \sup_{0 \leq z \leq u}\bigl(\beta_F(z) + h(u-z)\bigr) + \log\sum_{\mtt{i}\in\mathcal{I}^*}\rho(\mtt{i})^h
\end{align*}
In the second line, we used the fact that $\theta_{\mtt{i}} u$ takes values on $[0, u]$.
Swapping the summation and the supremum is not so lossy since, by pigeonholing and losing a $\log u$ error term, we may assume that $r\rho(\mtt{i})^{-1}$ is essentially constant, which does not cause issues since $\beta_F$ is essentially continuous.
This gives us the following candidate formula for $\beta_\Lambda$:
\begin{equation*}
    \beta_\Lambda(u) = \sup_{0 \leq z \leq u}\bigl(\beta_F(z) + h(u-z)\bigr) + o(u).
\end{equation*}
In order to make this sketch rigorous, we first prove a few essential technical facts.

\subsection{Technical results concerning level sets of cylinders}
The first technical concerns the summation defining the pressure function $P$.
\begin{lemma}\label{l:trunc-sum}
    For all $\varepsilon > 0$, there is a constant $C = C(\varepsilon) \geq 1$ so that
    \begin{equation*}
        \left(\frac{1}{r}\right)^{- \varepsilon}
        \leq
        \sum_{\substack{\mtt{i}\in\mathcal{I}^*\\Cr \geq \rho(\mtt{i}) \geq r}}\rho(\mtt{i})^h
        \leq
        \sum_{\substack{\mtt{i}\in\mathcal{I}^*\\\rho(\mtt{i}) \geq r}}\rho(\mtt{i})^h \lesssim_\varepsilon \left(\frac{1}{r}\right)^{\varepsilon}.
    \end{equation*}
\end{lemma}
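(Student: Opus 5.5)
The upper bound follows from \cref{e:total-bound}. The lower bound comes from approximating by a finite subsystem, as in the proof of the Hausdorff dimension formula, and then taking a ``stopping-time'' cut set of words whose ratios are all comparable to $r$.

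\emph{Upper bound.} The middle inequality is trivial, since the first sum runs over a subset of the terms of the second. For the last inequality, fix $\varepsilon>0$. For every $\mtt{i}$ with $\rho(\mtt{i})\geq r$ we have $\rho(\mtt{i})^{-\varepsilon}\leq r^{-\varepsilon}$, and therefore $\rho(\mtt{i})^h = \rho(\mtt{i})^{h+\varepsilon}\rho(\mtt{i})^{-\varepsilon}\leq \rho(\mtt{i})^{h+\varepsilon}r^{-\varepsilon}$. Summing over $\mtt{i}$ and applying \cref{e:total-bound} bounds the sum by $r^{-\varepsilon}\sum_{\mtt{i}\in\mathcal{I}^*}\rho(\mtt{i})^{h+\varepsilon}\lesssim_\varepsilon r^{-\varepsilon}$.

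\emph{Lower bound, case $h>0$.} Choose $s$ with $\max\{0,h-\varepsilon\}\leq s<h$. Then $P(s)>0$, possibly $P(s)=+\infty$. Exactly as in the proof of the Hausdorff dimension theorem, pick a finite $\mathcal{F}\subset\mathcal{I}$ with $\sum_{i\in\mathcal{F}}\rho(i)^s\geq 1$. Let $\rho_{\min}=\min_{i\in\mathcal{F}}\rho(i)>0$ and set $C=\rho_{\min}^{-1}\geq 1$; this $C$ depends only on $\varepsilon$ through $s$ and $\mathcal{F}$. Consider the stopping family
\begin{equation*}
    \mathcal{A}_r=\{\mtt{i}\in\mathcal{F}^*\setminus\{\varnothing\}: \rho(\mtt{i})<Cr\leq\rho(\mtt{i}^-)\}.
\end{equation*}
For $\mtt{i}\in\mathcal{A}_r$ we get $\rho(\mtt{i})\geq\rho_{\min}\rho(\mtt{i}^-)\geq r$ and $\rho(\mtt{i})<Cr$, so each word of $\mathcal{A}_r$ appears in the truncated sum. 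When $Cr\leq 1$, the family $\mathcal{A}_r$ is finite (by uniform contraction) and is a maximal mutually incomparable cut set of the tree $\mathcal{F}^*$: every infinite word in $\mathcal{F}^{\N}$ has exactly one prefix in $\mathcal{A}_r$. The key step is the claim $\sum_{\mtt{i}\in\mathcal{A}_r}\rho(\mtt{i})^s\geq 1$. I would prove it by induction on the cut set: replacing a word $\mtt{j}$ by its children $\{\mtt{j}i\}_{i\in\mathcal{F}}$ does not decrease the sum, because $\sum_i\rho(\mtt{j}i)^s=\rho(\mtt{j})^s\sum_i\rho(i)^s\geq\rho(\mtt{j})^s$, and $\mathcal{A}_r$ is reached from $\{\varnothing\}$ by finitely many such replacements. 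Given the claim, $\rho(\mtt{i})^h=\rho(\mtt{i})^s\rho(\mtt{i})^{h-s}\geq\rho(\mtt{i})^s r^{h-s}$ because $\rho(\mtt{i})\geq r$ and $h-s>0$. Hence the truncated sum is at least $r^{h-s}\geq r^{\varepsilon}$ for $r<1$.

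\emph{Remaining cases.} If $Cr>1$, then the empty word alone has $r\leq 1=\rho(\varnothing)\leq Cr$ and contributes $1\geq r^{\varepsilon}$. If $h=0$, then every term equals $1$, and it suffices that the truncated set is nonempty. For this fix any $i\in\mathcal{I}$ and take $C=\rho(i)^{-1}$; the powers $i^n$ have ratios decreasing geometrically by the factor $\rho(i)$, so some $i^n$ has ratio in $[r,Cr]$.

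The main point needing care is the cut-set inequality $\sum_{\mtt{i}\in\mathcal{A}_r}\rho(\mtt{i})^s\geq 1$, together with the bookkeeping that ensures every word of $\mathcal{A}_r$ has ratio in $[r,Cr]$. The latter is exactly why I stop at $Cr$ rather than at $r$ and why the subsystem must be finite.
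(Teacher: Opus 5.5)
Your proof is correct and follows essentially the paper's approach. The upper bound comes from \cref{e:total-bound}, and the lower bound from a finite subsystem $\mathcal{F}$ with $C=\max_{i\in\mathcal{F}}\rho(i)^{-1}$, using that every infinite word in $\mathcal{F}^{\N}$ has a prefix with ratio in $[r,Cr]$. The differences are cosmetic: you use $\sum_{i\in\mathcal{F}}\rho(i)^s\geq 1$ together with an explicit cut-set expansion argument (where the paper solves $\sum_{i\in\mathcal{F}}\rho(i)^t=1$ and leaves the cut-set step implicit), and you treat $h=0$ and $Cr>1$ separately (where the paper's argument covers them implicitly).
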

\begin{proof}
    We begin with the upper bound.
    Recalling \cref{e:total-bound},
    \begin{equation*}
        \sum_{\substack{\mtt{i}\in\mathcal{I}^*\\\rho(\mtt{i}) \geq r}}\rho(\mtt{i})^h \leq\left(\frac{1}{r}\right)^{\varepsilon}\sum_{\mtt{i}\in\mathcal{I}^*}\rho(\mtt{i})^{h +\varepsilon} \lesssim_\varepsilon\left(\frac{1}{r}\right)^{\varepsilon}.
    \end{equation*}
    For the lower bound, we approximate the summation using finite subsets of $\mathcal{I}$.
    Since $\sum_{i\in\mathcal{I}}\rho(i)^{h-\varepsilon} > 1$, there exists $\mathcal{F}\in\Fin(\mathcal{I})$ and $t > h-\varepsilon$ so that $\sum_{i\in\mathcal{F}}\rho(i)^{t} = 1$.
    Let $C = \max\{\rho(i)^{-1} : i\in\mathcal{F}\}$, so every infinite word $x\in\mathcal{F}^{\N}$ contains at least one prefix with $r \leq \rho(\mtt{i}) \leq Cr$, and therefore
    \begin{equation*}
        r^{-\varepsilon}\sum_{\substack{\mtt{i}\in\mathcal{I}^*\\Cr \geq \rho(\mtt{i}) \geq r}} \rho(\mtt{i})^h
        \geq \sum_{\substack{\mtt{i}\in\mathcal{I}^*\\Cr \geq \rho(\mtt{i}) \geq r}} \rho(\mtt{i})^{h-\varepsilon}
        \geq \sum_{\substack{\mtt{i}\in\mathcal{F}^*\\Cr \geq \rho(\mtt{i}) \geq r}}\rho(\mtt{i})^t \geq 1.
    \end{equation*}
    The proof is complete after rearranging.
\end{proof}
\cref{l:trunc-sum} shows at the cost of a $(1/r)^\epsilon$ factor, we may restrict attention to maps with similarity ratio approximately $r$.
Using the bounded neighbourhood condition, we can transfer the symbolic result concerning the pressure to a geometric fact about the distribution of the cylinders $f_{\mtt{i}}(X)$.
\begin{lemma}\label{l:geom-red}
    Let $\{f_i\}_{i\in\mathcal{I}}$ be an IFS of similarities satisfying the bounded neighbourhood condition with constant $M$.
    Then for all $C \geq 1$ and $0<r \leq R < 1$,
    \begin{equation*}
        \frac{1}{M C^{d+1}}\sum_{\substack{\mtt{i}\in\mathcal{I}^*\\CR \geq \rho(\mtt{i}) \geq R}} N_{r/R}(F)
        \lesssim
        N_r\left(\bigcup_{\substack{\mtt{i}\in\mathcal{I}^*\\CR \geq \rho(\mtt{i}) \geq R}} f_{\mtt{i}}(F)\right)
        \lesssim
        C^d\mkern-10mu
        \sum_{\substack{\mtt{i}\in\mathcal{I}^*\\CR \geq \rho(\mtt{i}) \geq R}} N_{r/R}(F)
    \end{equation*}
\end{lemma}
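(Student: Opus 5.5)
The upper bound follows from subadditivity of $N_r$ over the finite union and the scaling of each piece. Write $\mathcal{W} = \{\mtt{i}\in\mathcal{I}^*: CR\geq\rho(\mtt{i})\geq R\}$.

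\textbf{Upper bound.} The set $\mathcal{W}$ is finite: only finitely many $i$ have $\rho(i)\geq R$, and words with $\rho(\mtt{i})\geq R$ have length at most $\log R/\log\xi$. Hence
\begin{equation*}
    N_r\Bigl(\bigcup_{\mtt{i}\in\mathcal{W}} f_{\mtt{i}}(F)\Bigr)\leq\sum_{\mtt{i}\in\mathcal{W}}N_r(f_{\mtt{i}}(F)) = \sum_{\mtt{i}\in\mathcal{W}}N_{r/\rho(\mtt{i})}(F).
\end{equation*}
Since $r/\rho(\mtt{i})\geq r/(CR)$, the covering-number doubling estimate from \cref{e:approx-lip} gives $N_{r/\rho(\mtt{i})}(F)\leq N_{r/(CR)}(F)\lesssim C^d N_{r/R}(F)$. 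The factor $C^d$ is $2^{d\log C}$ from covering an $r/R$-ball by $r/(CR)$-balls. This proves the upper bound.

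\textbf{Lower bound.} Here I would use the bounded neighbourhood condition. Fix an optimal cover of the union by $N$ balls $B(y_k,r)$.

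The first step is to pass to an incomparable family. The family $\mathcal{W}$ need not be mutually incomparable, but chains in it are short. If $\mtt{i}\prec\mtt{j}$ are both in $\mathcal{W}$, then $\rho(\mtt{j})\leq\xi^{|\mtt{j}|-|\mtt{i}|}\rho(\mtt{i})$, so $|\mtt{j}|-|\mtt{i}|\lesssim \log C$. Grouping words by length modulo $\lceil\log C/\log(1/\xi)\rceil+1$ splits $\mathcal{W}$ into $\lesssim \log C\leq C$ mutually incomparable subfamilies. I expect this to be the source of the extra factor $C$ in $C^{d+1}$. Pick the subfamily $\mathcal{W}'$ carrying at least a $\gtrsim 1/C$ proportion of $\sum_{\mathcal{W}} N_{r/R}(F)$.

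The second step is a counting argument. Each $f_{\mtt{i}}(F)$ with $\mtt{i}\in\mathcal{W}'$ requires $N_r(f_{\mtt{i}}(F)) = N_{r/\rho(\mtt{i})}(F)\gtrsim C^{-d}N_{r/R}(F)$ balls of radius $r$, again by \cref{e:approx-lip}, since $r/\rho(\mtt{i})\in[r/(CR),r/R]$. So the balls of the cover meeting $f_{\mtt{i}}(F)$ number at least $\gtrsim C^{-d}N_{r/R}(F)$. Now count pairs (ball, word) with $B(y_k,r)\cap f_{\mtt{i}}(X)\neq\varnothing$. Since $\rho(\mtt{i})\geq R\geq r$ and $\mathcal{W}'$ is incomparable, the bounded neighbourhood condition (applied with a slightly enlarged radius, or with $r$ replaced by $r/2$ and doubling absorbed into the constant) says each ball meets at most $M$ of the sets $f_{\mtt{i}}(X)$. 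Therefore
\begin{equation*}
    M N\geq\sum_{\mtt{i}\in\mathcal{W}'}\#\{k: B(y_k,r)\cap f_{\mtt{i}}(F)\neq\varnothing\}\gtrsim C^{-d}\sum_{\mathcal{W}'}N_{r/R}(F)\gtrsim C^{-d-1}\sum_{\mathcal{W}}N_{r/R}(F).
\end{equation*}

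\textbf{Main obstacle.} The delicate point is the strictness in the bounded neighbourhood condition. It requires $\rho(\mtt{i})>r$ rather than $\geq$, and the $r$ there is tied to the ball radius. Handling the boundary case $R=r$ needs a harmless rescaling, which costs only constants depending on $d$ and $\xi$. The incomparable-splitting step should also be checked against the precise $C^{d+1}$ dependence; if $\log C$ appears instead of $C$, that is only better.
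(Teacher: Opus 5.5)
Your proof is correct, but the second step of your lower bound takes a different route from the paper. Your upper bound is the paper's argument. Your first lower-bound step, splitting $\mathcal{W}$ into $\lesssim 1+\log C$ mutually incomparable classes by word length modulo $L$, is an explicit version of the paper's passage to $\mathcal{M}_1$.

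The paper applies the bounded neighbourhood condition at scale $R$. It covers each $f_{\mtt{i}}(F)$ by $\lesssim C^d$ balls of radius $R$ and runs a greedy algorithm to extract a subfamily of relative size $\gtrsim M^{-1}C^{-d}$ whose images are pairwise $R$-separated. After that, covering numbers at any scale $r\leq R$ add up, up to a dimensional constant.

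You instead apply the condition at the scale $r$ of an optimal cover and double count (ball, word) incidences. Each ball is charged $\lesssim M$ times, so no separation step is needed. This is shorter, and it proves more than you claim. Since $\rho(\mtt{i})\geq R$, monotonicity alone gives $N_r(f_{\mtt{i}}(F)) = N_{r/\rho(\mtt{i})}(F)\geq N_{r/R}(F)$. The doubling estimate you cite bounds covering numbers at smaller radii from above, so it is not the relevant tool here. With this, your argument gives the lower bound with $M(1+\log C)$ in place of $MC^{d+1}$. What the paper's route buys is the intermediate geometric fact that a large subfamily of the images is $R$-separated, which does not depend on $r$.

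On the boundary issue you flag, only your second fix works. Enlarging the radius makes the requirement that $\rho(\mtt{i})$ exceed the radius harder to meet when $R$ is close to $r$. The clean fix is to cover $B(y_k,r)\cap X$ by $\lesssim_d 1$ balls of radius $r/2$ centred at points of $X$. This handles the strict inequality, since $\rho(\mtt{i})\geq R\geq r>r/2$. It also handles the fact that the condition is stated only for centres in $X$, whereas the centres $y_k$ of an optimal cover need not lie in $X$. The paper's own use of the condition at scale $R$ with $\rho(\mtt{i})\geq R$ has the same boundary issue and is fixed the same way.
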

\begin{proof}
    For the upper bound, note that $r\rho(\mtt{i})^{-1} \geq r/CR$, so the statement holds with $N_{r/CR}$ in place of $N_{r/R}$, but these covering numbers are the same up to a factor $C^d$ times a constant in $\R^d$.

    For the lower bound, we find a large subset of $\mathcal{M}\coloneqq\{\mtt{i}\in\mathcal{I}^*:R \leq\rho(\mtt{i}) \leq CR\}$ such that the sets $f_{\mtt{i}}(F)$ are $R$-separated.
    First, since each infinite $x\in\mathcal{I}^{\N}$ contains at most $\lesssim \log(C) + 1\lesssim C$ prefixes $\mtt{i}$ in $\mathcal{M}$, we may find a mutually incomparable sub-family $\mathcal{M}_1\subset\mathcal{M}$ such that $\#\mathcal{M}_1 \gtrsim \frac{1}{C}\#\mathcal{M}$.
    Next, since $\diam f_{\mtt{i}}(F) \lesssim CR$, we can cover $f_{\mtt{i}}(F)$ with $\lesssim C^d$ balls of radius $R$.
    For each such ball $B(x,R)$, by the bounded neighbourhood condition, there are at most $M$ words $\mtt{i}\in\mathcal{M}_1$ such that $f_{\mtt{i}}(F) \cap B(x,R) \neq \varnothing$.
    In particular, there are $\lesssim M C^d$ words $\mtt{j} \neq \mtt{i}$ such that $d(f_{\mtt{i}}(F), f_{\mtt{j}}(F)) \leq R$.
    Thus the greedy algorithm yields a subset $\mathcal{M}_2\subset\mathcal{M}_1$ with $\#\mathcal{M}_2 \gtrsim M^{-1}C^{-d}\#\mathcal{M}_1$ such that $d(f_{\mtt{i}}(F), f_{\mtt{j}}(F)) > R$ for all $\mtt{i} \neq \mtt{j}$ in $\mathcal{M}_2$.
    Since $r \leq R$, the lower bound follows.
\end{proof}

\subsection{Proof of the asymptotic formula}
We now have our main result on the covering numbers $r\mapsto N_r(\Lambda)$.
This result was first established in \cite{arxiv:2406.12821}, in the more general case that the maps are conformal.
\begin{theorem}\label{t:asymp}
    Let $\{f_i\}_{i\in\mathcal{I}}$ be an IFS of similarities satisfying the bounded neighbourhood condition with attractor $\Lambda$ and fixed point set $F$.
    Then its branching function satisfies
    \begin{equation*}
        \beta_\Lambda(u) = \sup_{0 \leq z \leq u}\bigl(\beta_F(z) + h(u-z)\bigr) + o(u).
    \end{equation*}
\end{theorem}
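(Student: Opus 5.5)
We prove the two inequalities separately. Write $\psi(u) = \sup_{0\le z\le u}(\beta_F(z) + h(u-z))$ and $r = 2^{-u}$.

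\textbf{Upper bound.} We make the heuristic computation preceding the theorem rigorous.

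By \cref{l:cov-alt}, $N_r(\Lambda) \lesssim N_r(F^*(r))$. Since each $f_{\mtt{i}}$ is a similarity,
\begin{equation*}
    N_r(\Lambda) \lesssim \sum_{\substack{\mtt{i}\in\mathcal{I}^*\\\rho(\mtt{i})\ge r}} N_{r\rho(\mtt{i})^{-1}}(F).
\end{equation*}
Writing $r\rho(\mtt{i})^{-1} = 2^{-\theta_{\mtt{i}}u}$, each summand equals
\begin{equation*}
    2^{\beta_F(\theta_{\mtt{i}} u)} = \rho(\mtt{i})^{h}\,2^{\beta_F(\theta_{\mtt{i}}u) + h(1-\theta_{\mtt{i}})u}\le \rho(\mtt{i})^h 2^{\psi(u)}.
\end{equation*}
Summing and applying the upper bound in \cref{l:trunc-sum} gives
\begin{equation*}
    \beta_\Lambda(u) \le \psi(u) + \varepsilon u + O_\varepsilon(1).
\end{equation*}
Since $\varepsilon>0$ is arbitrary, $\beta_\Lambda(u)\le \psi(u)+o(u)$. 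This direction needs neither the bounded neighbourhood condition nor any pigeonholing.

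\textbf{Lower bound.} Fix $\varepsilon>0$ and $0\le z\le u$. Set $R = 2^{z-u}$, so that $r/R = 2^{-z}$. Apply \cref{l:trunc-sum} at scale $R$ to get $C=C(\varepsilon)$ with
\begin{equation*}
    \sum_{CR\ge\rho(\mtt{i})\ge R}\rho(\mtt{i})^h\ge R^{\varepsilon}.
\end{equation*}
Each term satisfies $\rho(\mtt{i})^h\le (CR)^h$, so the number of such words is
\begin{equation*}
    \#\mathcal{M}\ge C^{-h}R^{\varepsilon-h}=C^{-h}2^{(h-\varepsilon)(u-z)}.
\end{equation*}
The lower bound in \cref{l:geom-red}, with the constants $M$ and $C$ fixed, then yields
\begin{equation*}
    N_r(\Lambda)\ge N_r\Bigl(\bigcup_{\mtt{i}\in\mathcal{M}}f_{\mtt{i}}(F)\Bigr)\gtrsim_{\varepsilon} \#\mathcal{M}\cdot N_{2^{-z}}(F)\gtrsim_\varepsilon 2^{\beta_F(z)+(h-\varepsilon)(u-z)}.
\end{equation*}
Taking logarithms, $\beta_\Lambda(u)\ge \beta_F(z)+h(u-z)-\varepsilon u - O_\varepsilon(1)$ uniformly in $z\in[0,u]$. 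Taking the supremum over $z$ and then letting $\varepsilon\to 0$ gives $\beta_\Lambda(u)\ge\psi(u)-o(u)$.

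\textbf{Main obstacle.} The substance lies in \cref{l:trunc-sum,l:geom-red}, which we may assume. The remaining care is uniformity: the implied constants must depend only on $\varepsilon$, not on $z$ or $u$. This holds because $C(\varepsilon)$ is independent of the scale $R$, and \cref{l:geom-red} holds for all $r\le R$.

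There is one edge case. When $R$ is close to $1$, the condition $CR \ge \rho(\mtt{i})$ may include words with $\rho(\mtt{i})$ near $1$; the bound still applies since $R<1$ when $z<u$. The case $z=u$ is trivial, since then $\Lambda\supset F$ gives $\beta_\Lambda(u)\ge\beta_F(u)$ directly.
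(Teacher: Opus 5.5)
Your proof is correct and is essentially the paper's argument. It uses the same three ingredients (\cref{l:cov-alt}, \cref{l:trunc-sum}, \cref{l:geom-red}), and your lower bound, which works at the single block of scales $[R, CR]$ with $R = 2^{z-u}$, is exactly the paper's lower bound. The only difference is in the upper bound. You bound each summand of $\sum_{\rho(\mtt{i})\geq r} N_{r\rho(\mtt{i})^{-1}}(F)$ termwise by $\rho(\mtt{i})^h 2^{\psi(u)}$ and sum using the truncated pressure estimate. The paper instead pigeonholes over the $\approx\log(1/r)$ blocks $[R,CR]$ and uses the upper half of \cref{l:geom-red}. Your version is slightly cleaner and makes visible that this direction needs no bounded neighbourhood condition.
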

\begin{proof}
    Let $\varepsilon > 0$ be arbitrary and let $C = C_\varepsilon\geq 1$ be the corresponding constant guaranteed by \cref{l:trunc-sum}.
    Let $u \geq 0$ and set $r = 2^{-u}$ and let $r \leq R \leq 1$ be arbitrary.
    Let $\theta$ be chosen so that $r/R = 2^{-\theta u}$ and write $N_{r/R}(F) = R^h 2^{(1-\theta)h u + \beta_F(\theta u)}$.
    Since $R \leq \rho(\mtt{i}) \leq C R$, by \cref{l:geom-red},
    \begin{align*}
        N_r\left(\bigcup_{\substack{\mtt{i}\in\mathcal{I}^*\\CR \geq \rho(\mtt{i}) \geq R}}f_{\mtt{i}}(F)\right)
        &\approx_\varepsilon \sum_{\substack{\mtt{i}\in\mathcal{I}^*\\CR \geq \rho(\mtt{i}) \geq R}}N_{r/R}(F)\\
        &= 2^{(1-\theta)h u + \beta_F(\theta u)}\sum_{\substack{\mtt{i}\in\mathcal{I}^*\\CR \geq \rho(\mtt{i}) \geq R}}R^h\\
        &\approx_\varepsilon 2^{(1-\theta)h u + \beta_F(\theta u)}\sum_{\substack{\mtt{i}\in\mathcal{I}^*\\CR \geq \rho(\mtt{i}) \geq R}}\rho(\mtt{i})^h.
    \end{align*}
    By \cref{l:cov-alt} and the pigeonhole principle,
    \begin{equation*}
        \frac{\log C_\varepsilon}{1 + \log(1/r)}\cdot N_r(\Lambda) \leq \sup_{r \leq R \leq 1} N_r\left(\bigcup_{\substack{\mtt{i}\in\mathcal{I}^*\\CR \geq \rho(\mtt{i}) \geq R}}f_{\mtt{i}}(F)\right) \leq N_r(\Lambda).
    \end{equation*}
    As $R$ ranges from $r$ to $1$, $\theta$ ranges from $1$ to $0$.
    Thus applying \cref{l:trunc-sum}, there is a constant $M_\varepsilon \geq 0$ so that, taking logarithms,
    \begin{equation*}
        \left\lvert\beta_\Lambda(u) - \sup_{0 \leq z \leq u}\bigl(\beta_F(z) + h(u-z)\bigr)\right\rvert \leq \varepsilon u + M_\varepsilon.
    \end{equation*}
    Since $\varepsilon > 0$ was arbitrary, we are done.
\end{proof}

\subsection{The asymptotic formula as an operator}
Let $\mathcal{A}$ denote the family of increasing functions from $[0,\infty) \to [0,\infty)$, and recall the space $\mathcal{B}(\alpha)$ of functions which are $\alpha$-Lipschitz and satisfy $f(0) = 0$.
Certainly $\mathcal{B}(\alpha)\subset\mathcal{A}$; we recall from \cref{ss:br} that $\mathcal{B}(d)$ is essentially the space of branching functions of subsets of $\R^d$.

With \cref{t:asymp} in mind, for a number $0 \leq h$ and $f\in\mathcal{A}$, let
\begin{equation*}
    \Psi_h(f)(u) = \sup_{0 \leq z \leq u}\bigl(f(z) + h(u-z)\bigr).
\end{equation*}
Since $f$ is increasing, $\Psi_0$ is the identity map.
On the other hand, $\Psi_h$ for $h > 0$ is already a non-trivial operator; for instance $\Psi_h(f)(u) \geq h u$ for any $f\in\mathcal{A}$.

We begin by establish some basic properties of $\Psi_h$ as an operator.
\begin{proposition}\label{p:psi}
    Let $h \geq 0$ and $f,g\in\mathcal{A}$.
    The following hold:
    \begin{enumerate}[nl,r]
        \item\label{i:a} $\Psi_hf \in \mathcal{A}$.
        \item\label{i:b} If $h\leq \alpha$ and $f\in\mathcal{B}(\alpha)$, then $\Psi_hf \in\mathcal{B}(\alpha)$.
        \item\label{i:pres} If $\eta\in\mathcal{A}$ and $f \leq g + \eta$, then $\Psi_hf \leq \Psi_hg + \eta$.
        \item\label{i:op} If $f \leq g$, then $\Psi_hf \leq \Psi_hg$.
        \item\label{i:ou} If $f = g + o(u)$, then $\Psi_hf = \Psi_hg + o(u)$.
        \item\label{i:o1} If $f = g + O(1)$, then $\Psi_hf = \Psi_hg + O(1)$.
    \end{enumerate}
\end{proposition}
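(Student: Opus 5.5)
The six items are elementary properties of a supremum of translated functions, so I will check them one at a time from the definition $\Psi_h(f)(u) = \sup_{0\le z\le u}\bigl(f(z)+h(u-z)\bigr)$. I will prove \cref{i:pres} first and derive \cref{i:op}, \cref{i:ou} and \cref{i:o1} from it.

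\textbf{Items \cref{i:a} and \cref{i:b}.} Each function $u\mapsto f(z)+h(u-z)$ is increasing in $u$. Enlarging $u$ also enlarges the range $[0,u]$ of $z$, so $\Psi_h f$ is increasing. It is nonnegative, and it is finite since $\Psi_hf(u)\le f(u)+hu$. Together these give \cref{i:a}.

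For \cref{i:b}, note that $\Psi_hf(0)=f(0)=0$. For $0\le u<v$ we split the supremum defining $\Psi_hf(v)$ into two ranges.
\begin{itemize}
\item For $z\le u$, the term is $f(z)+h(u-z)+h(v-u)\le \Psi_hf(u)+h(v-u)$.
\item For $u<z\le v$, the $\alpha$-Lipschitz property gives $f(z)\le f(u)+\alpha(z-u)$. Hence $f(z)+h(v-z)\le f(u)+\alpha(v-u)\le\Psi_hf(u)+\alpha(v-u)$, using $h\le\alpha$.
\end{itemize}
Since $\Psi_hf$ is increasing, we conclude $0\le\Psi_hf(v)-\Psi_hf(u)\le\alpha(v-u)$.

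\textbf{Item \cref{i:pres}.} For $z\le u$, monotonicity of $\eta$ gives $f(z)+h(u-z)\le g(z)+h(u-z)+\eta(z)\le g(z)+h(u-z)+\eta(u)$. Taking the supremum over $z$ gives $\Psi_hf\le\Psi_hg+\eta$. Item \cref{i:op} is then the case $\eta=0$.

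\textbf{Items \cref{i:ou} and \cref{i:o1}.} These are symmetric applications of \cref{i:pres}. The one obstacle is that the error function need not be increasing, so we first replace it by an increasing majorant.
\begin{itemize}
\item For \cref{i:o1}, suppose $|f-g|\le K$. Apply \cref{i:pres} with the constant $\eta=K$, in both directions.
\item For \cref{i:ou}, let $e=|f-g|$. Set $\eta(u)=\sup_{z\le u}e(z)$, which is increasing and satisfies $f\le g+\eta$ and $g\le f+\eta$. Then \cref{i:pres} gives $|\Psi_hf-\Psi_hg|\le\eta$.
\end{itemize}

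It remains to check $\eta(u)=o(u)$; this is the only genuinely non-formal step. Given $\varepsilon>0$, choose $u_0$ with $e(z)\le\varepsilon z$ for $z\ge u_0$. Then $\eta(u)\le\max\{\sup_{z\le u_0}e(z),\varepsilon u\}$. The quantity $\sup_{z\le u_0}e(z)$ is finite because $f,g$ are increasing, which gives $e(z)\le f(u_0)+g(u_0)$ on $[0,u_0]$. Therefore $\eta(u)\le 2\varepsilon u$ for large $u$, so $\eta(u)=o(u)$.
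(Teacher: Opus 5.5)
Your proof is correct and follows essentially the same route as the paper: (iii) is proved by the same monotonicity-of-$\eta$ computation and (iv)--(vi) are derived from it, while for (i)--(ii) the paper writes $\Psi_h f=\sup_{z\ge 0}f_z$, with $f_z$ agreeing with $f$ on $[0,z]$ and having slope $h$ afterwards, and uses that increasing functions and $\mathcal{B}(\alpha)$ are closed under pointwise suprema, which is equivalent to your direct two-case splitting of the supremum. Your increasing majorant $\eta(u)=\sup_{z\le u}|f(z)-g(z)|$ in (v) and your two-sided application of (iii) in (v)--(vi) supply details that the paper leaves implicit.
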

\begin{proof}
    For $f\in\mathcal{A}$ and $z \geq 0$, define
    \begin{equation*}
        f_z(u) = \begin{cases}
            f(u) &: 0 \leq u \leq z\\
            f(z) + h(u-z) &: z \leq u
        \end{cases}
    \end{equation*}
    Observe that $\Psi_hf = \sup_{z \geq 0}f_z$.
    If $f\in\mathcal{A}$, then $f_z\in\mathcal{A}$, and a supremum of increasing functions is increasing, so $\Psi_hf\in\mathcal{A}$ giving \cref{i:a}.
    Similarly, if $f\in\mathcal{B}(\alpha)$ where $\alpha\geq h$, then $f_z\in\mathcal{B}(\alpha)$; since $\mathcal{B}(\alpha)$ is closed under suprema, $\Psi_hf\in\mathcal{B}(\alpha)$ giving \cref{i:b}.

    Now, for \cref{i:pres}, observe that
    \begin{align*}
        \Psi_hf(u)
        &= \sup_{0\leq z \leq u}\bigl(f(z) + h(u-z)\bigr)\\
        &\leq \sup_{0\leq z \leq u}\bigl(g(z) + \eta(z) + h(u-z)\bigr)\\
        & \leq \Psi_hg(u) + \eta(u)
    \end{align*}
    since $\eta$ is increasing.
    Then \cref{i:op} follows by taking $\eta(u) = 0$, \cref{i:ou} follows by taking $\eta$ with $\lim_{u\to\infty}u^{-1}\eta(u) = 0$, and \cref{i:o1} follows by taking $\eta(u) = C$.
\end{proof}
Recall from \cref{p:approx-br} that if $E\subset\R^d$, then there is an $f\in\mathcal{B}(d)$ such that $\beta_E = f + O(1)$.
By \cref{p:psi}, the $O(1)$ error term is preserved by the operator $\Psi_h$, and $\Psi_h$ descends to an operator on $\mathcal{B}(d)$.
Therefore instead of thinking of $\Psi_h$ as acting on the branching function $\beta_F$, we think of $\Psi_h$ as acting on a representative $f\in\mathcal{B}(d)$ for $\beta_F$.

A convenient consequence of working with a representative $f\in\mathcal{B}(d)$ is that, by continuity of $f$, for each $u \geq 0$ there is a maximal $0 \leq u \leq \zeta_h(u)$ which realizes the supremum:
\begin{equation*}
    \Psi_h(f)(u) = f(\zeta_h(u)) + h(u-\zeta_h(u)).
\end{equation*}

Now we give a simple description of the operator $\Psi_h$ restricted to $\mathcal{B}(\alpha)$.
For $0 \leq h \leq \alpha$, set
\begin{equation*}
    \mathcal{B}_h(\alpha) = \{f\in\mathcal{B}(\alpha) : f(u+z) \geq f(u) + hz\text{ for all }0\leq z \leq u\}.
\end{equation*}
The space $\mathcal{B}_h(\alpha)$ is the set of branching functions which are \emph{lower $h$-Lipschitz}: if $\partial f(u)$ denotes the set of derivatives of $f$ at $u$, then $\mathcal{B}_h(\alpha)$ is the set of functions $f\colon[0,\infty)\to[0,\infty)$ with $f(0) = 0$ and $\partial f(u) \subset [h,\alpha]$ for all $u \geq 0$.
\begin{proposition}\label{p:proj-char}
    Let $0 \leq h \leq \alpha$ and $f\in\mathcal{B}(\alpha)$.
    Then
    \begin{equation*}
        \Psi_h(f) = \inf\{g\in\mathcal{B}_h(\alpha): g \geq f\}.
    \end{equation*}
    In particular, $\Psi_h\colon\mathcal{B}(\alpha)\to\mathcal{B}_h(\alpha)$ is surjective and idempotent.
\end{proposition}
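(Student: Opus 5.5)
The plan is to show directly that $\Psi_h(f)$ is itself an element of $\{g\in\mathcal{B}_h(\alpha): g\geq f\}$ and lies below every element of that set. Then the infimum is in fact a minimum, and it equals $\Psi_h(f)$. Before starting, I note that the condition in the definition of $\mathcal{B}_h(\alpha)$ restricted to $0\leq z\leq u$ already implies the unrestricted inequality $g(u+z)\geq g(u)+hz$ for all $u,z\geq 0$ (assuming $u>0$). To see this, split the increment $z$ into finitely many steps, each no larger than the current position: go from $u$ to $2u$ to $4u$, and so on, with a final partial step. Then sum the inequalities. At $u=0$ the unrestricted inequality reads $g(z)\geq hz$, which follows from the $u>0$ case by letting $u\to 0$ and using continuity with $g(0)=0$. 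So I will freely use the unrestricted form.

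\emph{Step 1 (membership).} Taking $z=u$ in the supremum gives $\Psi_h(f)\geq f$. By \cref{p:psi}~\cref{i:b}, $\Psi_h(f)\in\mathcal{B}(\alpha)$, since $h\leq\alpha$. For the lower $h$-Lipschitz property, fix $u,v\geq 0$ and restrict the supremum defining $\Psi_h(f)(u+v)$ to $z\in[0,u]$:
\begin{equation*}
    \Psi_h(f)(u+v)\geq\sup_{0\leq z\leq u}\bigl(f(z)+h(u-z)\bigr)+hv=\Psi_h(f)(u)+hv.
\end{equation*}
Hence $\Psi_h(f)\in\mathcal{B}_h(\alpha)$.

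\emph{Step 2 (minimality).} Let $g\in\mathcal{B}_h(\alpha)$ with $g\geq f$. For $0\leq z\leq u$, the (unrestricted) lower Lipschitz property gives
\begin{equation*}
    g(u)\geq g(z)+h(u-z)\geq f(z)+h(u-z).
\end{equation*}
Taking the supremum over $z$ yields $g\geq\Psi_h(f)$. Together with Step 1, this shows that $\Psi_h(f)$ is the minimum of the set, and hence the infimum.

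\emph{Step 3 (consequences).} If $f\in\mathcal{B}_h(\alpha)$, then $f$ itself belongs to $\{g\in\mathcal{B}_h(\alpha):g\geq f\}$. By Step 2, $\Psi_h(f)\leq f$, and by Step 1, $\Psi_h(f)\geq f$, so $\Psi_h(f)=f$. Since $\mathcal{B}_h(\alpha)\subset\mathcal{B}(\alpha)$, this gives surjectivity onto $\mathcal{B}_h(\alpha)$. Since Step 1 shows that $\Psi_h$ maps into $\mathcal{B}_h(\alpha)$, it also gives idempotence.

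The only real obstacle is the bookkeeping with the restricted range $z\leq u$ in the definition of $\mathcal{B}_h(\alpha)$. This is handled by the doubling argument in the first paragraph, which is needed in Step 2 for $g$. Step 1 establishes the unrestricted property for $\Psi_h(f)$ directly, so no extra care is needed there.
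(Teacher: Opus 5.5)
Your proof is correct and follows essentially the same route as the paper: show $\Psi_h f\geq f$ and $\Psi_h f\in\mathcal{B}_h(\alpha)$, then show $\Psi_h f\leq g$ for every $g\in\mathcal{B}_h(\alpha)$ with $g\geq f$. The differences are minor improvements: you work with suprema directly rather than the maximizer $\zeta_h(u)$, and you spell out the surjectivity and idempotence that the paper leaves implicit. Your doubling argument also justifies the unrestricted inequality $g(u+z)\geq g(u)+hz$, which the paper's minimality step uses tacitly even though the definition of $\mathcal{B}_h(\alpha)$ as written only imposes it for $z\leq u$.
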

\begin{proof}
    Let $f\in\mathcal{B}(\alpha)$.
    Clearly $\Psi_h f \geq f$.
    We next show that $\Psi_h f \in \mathcal{B}_h(\alpha)$.
    Recall that $\Psi_h f\in\mathcal{B}(\alpha)$ from \cref{p:psi}~\cref{i:b}, so it suffices to check the $h$-Lipschitz property.
    Let $0 \leq v \leq u$: then
    \begin{align*}
        \Psi_h f(u) &\geq h(u - \zeta_h(v)) + f(\zeta_h(v))\\
                    &= h(u-\zeta_h(v)) + \Psi_hf(v) - h(v - \zeta_h(v))\\
                    &= \Psi_h f(v) + h(u-v)
    \end{align*}
    as required.
    To complete the proof, suppose $g\in\mathcal{B}_h(\alpha)$ and $g \geq f$: we must show that $g \geq \Psi_hf$.
    Indeed, using the $h$-Lipschitz property of $g$,
    \begin{align*}
        \Psi_hf(u) = f(\zeta_h(u)) + h (u - \zeta_h(u))\leq g(\zeta_h(u)) + h (u - \zeta_h(u))\leq g(u).
    \end{align*}
    as required.
\end{proof}
The formula \cref{p:proj-char} has a natural geometric interpretation.
Momentarily forget about the formula for $\beta_\Lambda$, and assume we are given the values $h$ and the function $\beta_F$.
What can we say about $\beta_\Lambda$?
\begin{enumerate}[nl]
    \item Since $F\subset\Lambda$, we must have $\beta_F \leq \beta_\Lambda$.
    \item\label{i:lower} Suppose we know the value of $\beta_\Lambda(u)$.
        What can we say about $\beta_\Lambda(u+v)$?
        Let $\{B(x_i, 2^{-u})\}$ be a maximal family of disjoint balls with $x_i\in\Lambda$.
        Since $\Lambda$ is self-conformal, in a heuristic sense, we can find a copy of $\Lambda$ scaled by a factor of $2^{-u}$ inside each ball $B(x_i, 2^{-u})$.
        Since $\Lambda$ is $h$-dimensional, this means that $N_{2^{-(u+v)}}(B(x_i, 2^{-u})\cap \Lambda) \geq 2^{h(u+v)}$: in words, $\Lambda$ is ``at least $h$-dimensional in all balls''.
        But this property of being ``at least $h$-dimensional between all pairs of scales'', at the level of branching functions, is the same as saying that $\beta_\Lambda$ is lower $h$-Lipschitz.
\end{enumerate}
\cref{p:proj-char} says that these are the only two obstructions: $\beta_\Lambda$ is \emph{as small as possible}, given that it is bounded below by $\beta_F$ and lower $h$-Lipschitz.
\begin{figure}[t]
    \centering
    \input{figures/bf_transform.tex}
    \caption{A branching function $f$ and the corresponding function $\Psi_hf$.}
    \label{f:bf-transform}
\end{figure}

Note, in the infinite case, that property \cref{i:lower} is not literally true: the issue is that in a ball $B(x,r)$ with $x\in\Lambda$, there need not exist a copy of $\Lambda$ rescaled by a factor of exactly $r$: the rescaled copy could be much smaller.
However, this is still true on average in a weak sub-exponential sense, and this is what is formalized in the proof of \cref{t:asymp}.

\subsection{Consequences for the lower box dimension}
Now, we use \cref{t:asymp} to obtain some applications concerning the lower box dimension.
The first application is a non-trivial bound on $\dimlB\Lambda$ in terms of $h$, $\dimlB F$, $\dimuB F$, and the ambient dimension $d$.
Given $0 \leq s\leq t \leq \alpha$ and $0\leq h\leq \alpha$, let
\begin{equation*}
    \mathcal{D}(h,s,t,\alpha) = \begin{cases}
        \{h\} &: t \leq h,\\
        \left[\max\{h,s\}, h + \frac{(t-h)(\alpha-h) s}{\alpha t - hs}\right] &: t > h.
    \end{cases}
\end{equation*}
Note that $\mathcal{D}(h,s,t,\alpha)$ is not a singleton if and only if $0<h<t$ and $0<s<t$.
\begin{corollary}\label{c:dimlb}
    Let $\{f_i\}_{i\in\mathcal{I}}$ be an IFS of similarities satisfying the bounded neighbourhood condition with attractor $\Lambda$ and fixed point set $F$.
    Then
    \begin{equation*}
        \dimlB\Lambda \in\mathcal{D}(\dimH\Lambda,\dimlB F,\dimuB F, d).
    \end{equation*}
\end{corollary}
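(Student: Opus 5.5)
The lower bound $\dimlB\Lambda \geq \max\{h,\dimlB F\}$ is already \cref{e:triv}, with $h = \dimH\Lambda$ by the Mauldin--Urbański theorem. If $t\coloneqq\dimuB F \leq h$, then \cref{e:triv} also gives $\dimlB\Lambda \leq \max\{h,t\} = h$, so $\dimlB\Lambda = h$. So the plan is to prove the upper bound in the case $t > h$. I will do this with \cref{t:asymp}, using a clean representative for $\beta_F$.

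Write $s = \dimlB F$ and $\alpha = d$. By \cref{p:dimb-bf}, choose $f\in\mathcal{B}(d)$ with $f = \beta_F + o(u)$ and $su \leq f(u)\leq tu$ for all $u$. By \cref{t:asymp} and \cref{p:psi}~\cref{i:ou}, $\beta_\Lambda = \Psi_h f + o(u)$. Hence $\dimlB\Lambda = \liminf_{u\to\infty}\Psi_hf(u)/u$, and it suffices to bound this liminf.

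Fix $\varepsilon>0$ and take $u_n\to\infty$ with $f(u_n)\leq (s+\varepsilon)u_n$. Set $u = \lambda u_n$, where $\lambda \geq 1$ is to be chosen, and split the supremum defining $\Psi_hf(u)$ at $z = u_n$.
\begin{itemize}
\item For $z\leq u_n$, monotonicity and $f\leq tz$ give $f(z)\leq\min\{tz,(s+\varepsilon)u_n\}$. Since $t > h$, the expression $f(z)+h(u-z)$ is maximized where $tz = (s+\varepsilon)u_n$, which gives the bound $(s+\varepsilon)u_n + h\bigl(u - (s+\varepsilon)u_n/t\bigr)$.
\item For $u_n\leq z\leq u$, the $\alpha$-Lipschitz property gives $f(z)\leq (s+\varepsilon)u_n + \alpha(z-u_n)$. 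Since $\alpha\geq h$, $f(z)+h(u-z)$ is then maximized at $z=u$, with bound $(s+\varepsilon)u_n + \alpha(u-u_n)$.
\end{itemize}
Balancing the two bounds with $\varepsilon=0$ gives
\begin{equation*}
    \lambda = \frac{\alpha t - hs}{t(\alpha-h)}.
\end{equation*}
This satisfies $\lambda \geq 1$ because $hs\leq ht$. (If $\alpha = h$, then $t\leq \alpha = h$, which is the already-handled case.)

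With this choice, $\Psi_hf(\lambda u_n)/(\lambda u_n)$ tends, up to $O(\varepsilon)$, to
\begin{equation*}
    \alpha - \frac{\alpha-s}{\lambda} = \alpha - \frac{(\alpha-s)t(\alpha-h)}{\alpha t - hs} = h + \frac{(t-h)(\alpha-h)s}{\alpha t-hs}.
\end{equation*}
The last equality is a direct algebraic check. Letting $\varepsilon\to0$ gives the stated upper bound.

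The main difficulty is identifying the extremal configuration: where to split the supremum, and which scale $u=\lambda u_n$ to test. Once the split at $u_n$ and the two linear envelopes for $f$ are fixed, everything else is optimizing two piecewise-linear functions and checking $\lambda\ge1$.
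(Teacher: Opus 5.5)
Your proof is correct and is essentially the paper's argument. The paper packages your two envelopes, $f(z)\le\min\{tz,s'w\}$ for $z\le w$ and $f(z)\le s'w+d(z-w)$ for $z\ge w$, into a single piecewise-linear comparison function $g\ge f$, and then uses the monotonicity $\Psi_h f\le\Psi_h g$ from \cref{p:psi}. Its evaluation point $z=\frac{dw-hv}{d-h}$, with $tv=s'w$, is exactly your test scale $\lambda u_n$ (with $s'$ in place of $s$), and your computation also covers the case $s=t$ correctly, via $\lambda=1$.
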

\begin{figure}[t]
    \centering
    \input{figures/dimlb_bound.tex}
    \caption{
        The upper bound $g$ in the proof of \cref{c:dimlb}, in the special case that $f(w)=sw$.
        The dashed line has slope $h$.
        The function $f$ (not depicted) is some branching function bounded above by $g$ and below by $su$.
    }
    \label{f:dimlb-bound}
\end{figure}
\begin{proof}
    Write $s = \dimlB F$ and $t = \dimuB F$.
    If $t \leq h$ or $s = t$, we already saw that $\dimlB\Lambda = \max\{h,s\}$, and in these cases $\mathcal{D}(h,s,t,d) = \{\max\{h, s\}\}$.
    If $\dimuB F \leq h$ or $\dimlB F = \dimuB F$, we already saw that $\dimlB \Lambda = h$.
    Therefore we may assume that $s < t$ and $h < t$.
    By \cref{p:dimb-bf}, get $f\in\mathcal{B}(d)$ such that $f = \beta_F + o(u)$, $f(u) \in [su, tu]$ for all $u \geq 0$.
    Then $\beta_\Lambda = \Psi_h f + o(u)$ by \cref{t:asymp} and \cref{p:psi}~\cref{i:ou}.

    Let $s < s' < t$ be arbitrary.
    By definition of $s = \dimlB F$, there exists arbitrarily large $w\geq 0$ such that $f(w) \leq s' w$.
    Fix such a value $w$, let $v \leq w$ be such that $t v = s' w$ and consider the function
    \begin{equation*}
        g(u) = \begin{cases}
            t u &: u \leq v,\\
            tv &: v\leq u \leq w,\\
            tv + d(u-w) &: w \leq u.
        \end{cases}
    \end{equation*}
    Clearly $g\in\mathcal{B}(d)$.
    Moreover, since $f(u) \leq tu$ and $f(w) \leq s' w$, $f \leq g$.
    In particular, by \cref{p:psi}~\cref{i:op}, $\Psi_h f \leq \Psi_h g$.

    Since $h < t \leq d$, $g$ is bounded above by the affine line $\ell(u) = tv + h (u-v)$ on the interval $[0, z]$ where $z$ is the largest intersection point of $g$ and $\ell$ (and the only intersection point other than $(v, \ell(v))$).
    For this value $z$,
    \begin{equation*}
        \Psi_h f(z) \leq \Psi_h g(z) = \ell(z).
    \end{equation*}
    Writing $v$ and $z$ in terms of $w$ (and the ambient data $s'$, $t$, $h$, and $d$), a direct computation shows that
    \begin{equation*}
        \frac{\ell(z)}{z} = h + \frac{(t-h)(d-h) s'}{d t - hs'}.
    \end{equation*}
    Since this holds for arbitrarily large $z$, and since $s' > s$ was arbitrary, we conclude that $\dimlB\Lambda \in \mathcal{D}(h,s,t,d)$ as claimed.
\end{proof}
\begin{remark}
    In fact, the bounds in \cref{c:dimlb} are sharp in the following sense: given any values for $0 < h<d$, $0\leq s \leq t \leq d$, and $\beta\in\mathcal{D}(h,s,t,d)$ there exists an IFS of similarities on $\R^d$ satisfying the bounded neighbourhood condition with attractor $\Lambda$ and fixed point set $F$ such that $\dimH\Lambda = h$, $\dimlB F = s$, $\dimuB F = t$, and $\dimlB\Lambda = \beta$.
    See \cref{ss:sharp} for more details.
\end{remark}
\begin{remark}\label{r:inf}
    The sets $\mathcal{D}(h,s,t,\alpha)$ are monotonically increasing in $\alpha$.
    Moreover,
    \begin{align*}
        \mathcal{D}(h,s,t,\infty)\coloneqq{}&\lim_{\alpha\to\infty}\mathcal{D}(h,s,t,\alpha)\\
        ={}&
        \begin{cases}
            \{h\} &: t \leq h,\\
            \left[\max\{h,s\}, s + \bigl(1-\frac{s}{t}\bigr)h\right] &: t > h.
        \end{cases}
    \end{align*}
    This gives non-trivial bounds which are valid in all dimensions simultaneously.
\end{remark}
Using \cref{c:dimlb}, we now obtain our characterization of the existence of the box dimension of $\Lambda$.
This fact is intuitively clear from \cref{f:dimlb-bound}: as long as $s < t$ and $h < t$, the dashed line intersects the function $g$ strictly below the line with slope $t$.
\begin{corollary}\label{c:dimlb-char}
    Let $\{f_i\}_{i\in\mathcal{I}}$ be an IFS of similarities satisfying the bounded neighbourhood condition with attractor $\Lambda$ and fixed point set $F$.
    Then $\dimlB\Lambda = \dimuB\Lambda$ if and only if $\dimuB F \leq \max\{h, \dimlB F\}$.
\end{corollary}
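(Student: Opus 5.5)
The plan is to combine the trivial bounds \cref{e:triv} with the upper bound from \cref{c:dimlb}, and then check that the interval $\mathcal{D}(h,s,t,d)$ collapses exactly when $t \leq \max\{h,s\}$. Throughout, write $h = \dimH\Lambda$, $s = \dimlB F$ and $t = \dimuB F$. By \cref{t:dimuB-ss} (which applies because the bounded neighbourhood condition gives $\dimH\Lambda = h$), we have $\dimuB\Lambda = \max\{h,t\}$. By \cref{e:triv}, $\dimlB\Lambda \geq \max\{h,s\}$.

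For the ``if'' direction, suppose $t \leq \max\{h,s\}$. Since $s \leq t$, either $t \leq h$, or $h < t$ and then necessarily $s = t$. In the first case both the lower and upper bounds in \cref{e:triv} equal $h$. In the second case both equal $\max\{h,t\} = \max\{h,s\}$. Either way $\dimlB\Lambda = \dimuB\Lambda$. This direction is immediate and needs nothing beyond \cref{e:triv}.

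For the ``only if'' direction, I will argue by contrapositive. Suppose $t > \max\{h,s\}$, so $h < t$ and $s < t$ and $\dimuB\Lambda = t$. By \cref{c:dimlb},
\begin{equation*}
    \dimlB\Lambda \leq h + \frac{(t-h)(d-h)s}{dt - hs},
\end{equation*}
so it suffices to show that this quantity is strictly less than $t$. Equivalently, I need $(d-h)s < dt - hs$; the denominator is positive since $dt \geq ht > hs$ when $t>0$. After cancelling $t-h>0$, this inequality reduces to $ds < dt$, which holds because $s<t$ and $d \geq 1$.

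The main (mild) obstacle is being careful with degenerate values. If $h = 0$ the bound reads $\frac{tds}{dt} = s < t$, which is fine. The remaining degenerate cases ($t=0$, or $d=h$, i.e.\ $h=t=d$) cannot occur under the standing assumption $t > h \geq 0$ together with $t \leq d$. So the strict inequality $\dimlB\Lambda < t = \dimuB\Lambda$ holds, completing the contrapositive.
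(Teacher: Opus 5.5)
Your proof is correct and follows the paper's route: the ``if'' direction comes straight from \cref{e:triv}, and the ``only if'' direction takes the upper endpoint of $\mathcal{D}(h,s,t,d)$ from \cref{c:dimlb} and checks that it lies strictly below $t$. The only difference is that the paper first passes to the weaker, dimension-free bound $s + (1-s/t)h$ of \cref{r:inf} (which is $<t$ because $s<t$ and $h<t$), whereas you verify the $d$-dependent endpoint directly by reducing to $ds<dt$; your version also avoids the inverted ratio $\dimuB F/\dimlB F$ that appears in the paper's displayed computation.
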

\begin{proof}
    Recalling the earlier discussion in \cref{e:triv}, if $\dimuB F \leq \max\{h, \dimlB F\}$, then it follows immediately that $\dimlB\Lambda = \dimuB\Lambda$.

    Conversely, suppose that $\dimuB F > \max\{h, \dimlB F\}$.
    In particular,
    \begin{equation*}
        \dimuB\Lambda > h \qquad\text{and}\qquad 1 - \frac{\dimuB F}{\dimlB F} > 0.
    \end{equation*}
    Thus by \cref{c:dimlb} and \cref{r:inf},
    \begin{align*}
        \dimlB\Lambda &\leq \dimlB F + \left(1-\frac{\dimuB F}{\dimlB F}\right)h\\
                      &<\dimlB F + \left(1-\frac{\dimuB F}{\dimlB F}\right)\dimuB F\\
                      &=\dimuB F\\
                      &\leq \dimuB \Lambda
    \end{align*}
    as claimed.
\end{proof}

\section{Various loose ends}\label{s:final}
We now tie up a variety of loose ends, sometimes with full proofs and sometimes with less detail.
Most notably, we discuss the general self-conformal case, discuss a few other notions of dimension, and conclude with some open problems.

\subsection{Infinitely-generated self-conformal sets}\label{ss:conf}
Let us first discuss what must be changed to handle more general self-conformal iterated function systems.
For us, the following definition will be sufficient.
\begin{definition}\label{d:conf}
    We say that the IFS $\{f_i\}_{i\in\mathcal{I}}$ is \emph{weakly conformal} if there are constants $c > 0$, $K \geq 1$, $\xi\in(0,1)$, and a function $\rho\colon\mathcal{I}^* \to (0,1]$ such that:
    \begin{enumerate}[nl,r]
        \item $\rho(\varnothing) = 1$,
        \item\label{i:ctr} $\rho(i) \leq \xi$ for all $i\in\mathcal{I}$,
        \item\label{i:submul} $K^{-1}\rho(\mtt{i})\rho(\mtt{j}) \leq \rho(\mtt{i}\mtt{j}) \leq \rho(\mtt{i})\rho(\mtt{j})$, and
        \item for all $r > 0$ sufficiently small, $x\in X$, and $r > 0$,
            \begin{equation}\label{e:rho-ball-distort}
                B\bigl(f_{\mtt{i}}(x), c\rho(\mtt{i}) r\bigr) \subseteq f_{\mtt{i}}(B(x,r)) \subseteq B\bigl(f_{\mtt{i}}(x), \rho(\mtt{i}) r\bigr).
            \end{equation}
    \end{enumerate}
\end{definition}
\begin{remark}
    Condition \cref{i:ctr} is just strict contractivity of the IFS.

    Since the ball $B(x,r)$ may not be contained in $X$, \cref{d:conf} implicitly assumes that that there is an open neighbourhood $V$ of $X$ such that each $f_{\mtt{i}}$ may be extended to map on $V$.
\end{remark}
\begin{example}
    Let us verify that these assumptions are satisfied for the inverse branches of the Gauss map.
    For each function $\phi_b$, recall that $\phi_b(x) = 1/(b + x)$.
    Let us prove, by induction on $n$, that for each $\mtt{b}\in\N^n$,
    \begin{equation}\label{e:deriv-ind}
        \phi_{\mtt{b}}'(x) = \frac{(-1)^n}{(px + q)^2}
    \end{equation}
    for integers $1 \leq p \leq q$.
    (In fact, the constants $p$ and $q$ are precisely the denominators of the convergents at steps $n-1$ and $n$ respectively.)
    Clearly this is true for $n = 1$.
    Now let $n \geq 1$ be arbitrary and suppose $\mtt{b}\in\N^{n+1}$.
    Write $\mtt{b} = \mtt{b}^- j$ so
    \begin{equation*}
        \phi_{\mtt{b}}'(x) = \frac{(-1)^n}{(p(j + x)^{-1} + q)^2}\cdot\frac{-1}{(j+x)^2}= \frac{(-1)^{n+1}}{\bigl(qx + (q j + p)\bigr)^2}
    \end{equation*}
    which is of the claimed form.

    Now, it is easy to check using \cref{e:deriv-ind} that $|\phi_{\mtt{b}}'(x)| \leq 4|\phi_{\mtt{b}}'(y)|$ for all $x,y\in [0,1]$; and therefore in some small open neighbourhood $[0,1]\subset V$ we have that $|\phi_{\mtt{b}}'(x)| \leq 5|\phi_{\mtt{b}}'(y)|$ for all $x,y\in V$.
    Thus we can define the function $\rho(\mtt{b}) = \sup_{x\in V}|\phi_{\mtt{b}}'(x)|$ and the IFS is weakly conformal with constants $c^{-1} = K = 5$.

\end{example}
Similarly to the definition of the pressure in the self-similar case, the function $\rho$ allows us to define the pressure in the self-conformal case:
\begin{equation*}
    P(t) = \lim_{m\to\infty}\frac{1}{m}\log S_m(t)\qquad\text{where}\qquad S_m(t)\coloneqq\sum_{\mtt{i}\in\mathcal{I}^m}\rho(\mtt{i})^t.
\end{equation*}
By \cref{d:conf}~\cref{i:submul}, it always holds that $K^{-t} S_k(t)S_m(t) \leq S_{k+m}(t) \leq S_k(t)S_m(t)$.
Therefore, the limit defining $P(t)$ always exists and moreover that $S_m(t) < \infty$ if and only if $S_1(t) < \infty$.
Moreover, since the IFS is strictly contracting, we can write
\begin{equation*}
    \frac{1}{m}\log\sum_{\mtt{i}\in\mathcal{I}^m}\rho(\mtt{i})^t = t \log\xi +\frac{1}{m}\log \sum_{\mtt{i}\in\mathcal{I}^m}(\xi^{-m}\rho(\mtt{i}))^t.
\end{equation*}
Since $\xi^{-m}\rho(\mtt{i}) \leq 1$, it follows that $P(t) - t\log\xi$ is decreasing, so in particular $P(t)$ is strictly decreasing and $\lim_{t\to\infty}P(t) = -\infty$ as long as $P(t) < \infty$ for some $t$.
With this in mind, if $\eta = \inf\{t \geq 0:S_1(t) < \infty\}$ then $P(t) < \infty$ for all $t >\eta$.
In particular,
\begin{equation}\label{e:fin-m}
    \sum_{\mtt{i}\in\mathcal{I}^m}\rho(\mtt{i})^{\eta+\varepsilon} < \infty.
\end{equation}
If $\rho$ is multiplicative, then
\begin{equation*}
    \sum_{\mtt{i}\in\mathcal{I}^m}\rho(\mtt{i})^t=\left(\sum_{i\in\mathcal{I}}\rho(i)^t\right)^m
\end{equation*}
so the definition of the pressure coincides with our earlier definition in the special case that the $f_{\mtt{i}}$ are similarities.
In order to work with inequalities involving the pressure, we must therefore work with the higher iterates $\{f_{\mtt{i}}\}_{\mtt{i}\in\mathcal{I}^n}$.

For the upper bound, we no longer (in general) control sums of the form $\sum_{\mtt{i}\in\mathcal{I}^*}\rho(\mtt{i})^h$.
Instead, we can obtain bounds of the form
\begin{equation*}
    \sum_{\substack{\mtt{i}\in(\mathcal{I}^m)^*\\\rho(\mtt{i}) > r}}\rho(\mtt{i})^h \lesssim_\varepsilon\left(\frac{1}{r}\right)^{\varepsilon}
\end{equation*}
where $m$ must be chosen sufficiently large in terms of $\varepsilon$.
Since the ``step size'' is now of order $m$, we instead obtain bounds in terms of the set $F_m$ of fixed points of $\{f_{\mtt{i}}:\mtt{i}\in\mathcal{I}^m\}$.
Note that $\beta_{F_m}$ may be much larger than $\beta_{F_1}$: regardless, we still expect that $\beta_{F_m}$ is bounded above by $\Psi_h\beta_{F_1}$ (for instance, this must be true if the main result holds).
Moreover, since $\Psi_h$ is idempotent, this means that $\Psi_h\beta_{F_1} = \Psi_h\beta_{F_m}$.
So that the argument is not circular, we must show up-front that that $\Psi_h\beta_{F_m} = \Psi_h\beta_{F_1}$.
This can be done directly with an induction argument.

It turns out that instead of working with sets of fixed points, it is more convenient to work with the level $m$ orbit set
\begin{equation*}
    F_m\coloneqq \bigcup_{\mtt{i}\in\mathcal{I}^m}f_{\mtt{i}}(x_0)
\end{equation*}
where $x_0$ is chosen so that $f_i(x_0) = x_0$ for some $i\in\mathcal{I}$.
It is not too difficult to show that if $F$ is the set of fixed points and $F_1$ is the orbit set above, then $\beta_F = \beta_{F_1} + O(1)$; see \cref{p:approx-orbit}.
With this choice of $x_0$, observe that
\begin{equation}\label{e:F-invariance}
    F_{m+1}=\bigcup_{\mtt{i}\in\mathcal{I}^{m}}f_{\mtt{i}}(F_1) \subset \Lambda.
\end{equation}
As before, for each $\mtt{i}\in\mathcal{I}^*$ with $\rho(\mtt{i})>r = 2^{-u}$, let $\theta_{\mtt{i}}(r)\in(0,1]$ be chosen so that $r \rho(\mtt{i})^{-1} = r^{\theta_\mtt{i}(r)}$ and therefore
\begin{equation}\label{e:lambda-choice}
    N_{r\cdot \rho(\mtt{i})^{-1}}(E) =  \rho(\mtt{i})^h\cdot 2^{ (1-\theta_{\mtt{i}}(r)) h u + \theta_{\mtt{i}}(r) \beta_E(u)}
\end{equation}
for a general set $E$.
Also, recall that
\begin{equation}\label{e:C-choice}
    N_r\left(\bigcup_{\substack{\mtt{i}\in\mathcal{I}^m\\\rho(\mtt{i})\leq r}}f_{\mtt{i}}(\Lambda)\right) \lesssim N_r(F_m).
\end{equation}
since each $f_{\mtt{i}}(\Lambda)$ where $\rho(\mtt{i}) \leq r$ is contained in the $r\cdot(\diam X)$-neighbourhood of $F_m$.
We now establish invariance under higher iterates.
The idea here is similar to the idea in the proof of \cite[Lemma~2.10]{zbl:0940.28009}.
\begin{lemma}\label{l:higher}
    Let $\{f_i\}_{i\in\mathcal{I}}$ be a weakly conformal IFS.
    Then for each $m\in\N$,
    \begin{equation*}
        \Psi_h \beta_{F_m}(u) = \Psi_h\beta_{F_1}(u) + o_m(u).
    \end{equation*}
\end{lemma}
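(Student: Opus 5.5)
Both directions are comparisons between orbit sets at different levels, and I would prove them separately using the properties of $\Psi_h$ from \cref{p:psi} and \cref{p:proj-char}. For the lower bound $\Psi_h\beta_{F_m} \geq \Psi_h\beta_{F_1} - o_m(u)$, the plan is to find a copy of $F_1$ inside $F_m$ up to a bounded distortion. Fix $i_0$ with $f_{i_0}(x_0) = x_0$. Then $f_{i_0}^{m-1}(F_1) = \{f_{i_0^{m-1}j}(x_0): j\in\mathcal{I}\}\subset F_m$. Since $f_{i_0^{m-1}}$ is weakly conformal, \cref{e:rho-ball-distort} shows it distorts balls by at most a factor depending only on $m$ (namely $c$ and $\rho(i_0^{m-1})$). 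Hence $\beta_{F_m}(u) \geq \beta_{F_1}(u - O_m(1)) - O_m(1) \geq \beta_{F_1}(u) - O_m(1)$, using the approximate Lipschitz property \cref{e:approx-lip}. Applying \cref{p:psi}~\cref{i:pres} then gives this direction with an $O_m(1)$ error.

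The upper bound is the substantive part. By \cref{p:proj-char}, $G \coloneqq \Psi_h\beta_{F_1}$ (after replacing $\beta_{F_1}$ by a representative in $\mathcal{B}(d)$) is lower $h$-Lipschitz and idempotent. It therefore suffices to show $\beta_{F_m} \leq G + o_m(u)$, because monotonicity (\cref{p:psi}~\cref{i:op}, \cref{i:ou}) and idempotence then give $\Psi_h\beta_{F_m}\leq \Psi_h G + o(u) = G + o(u)$. I would argue by induction on $m$, using \cref{e:F-invariance} in the form $F_{m+1} = \bigcup_{j\in\mathcal{I}} f_j(F_m)$, or alternatively $F_{m+1}=\bigcup_{\mtt{i}\in\mathcal{I}^m} f_{\mtt{i}}(F_1)$. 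At scale $r = 2^{-u}$, I split the words $\mtt{i}\in\mathcal{I}^m$ according to whether $\rho(\mtt{i}) \leq r$ or $\rho(\mtt{i}) > r$.

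The pieces with $\rho(\mtt{i})\leq r$ all lie in an $O(r)$-neighbourhood of $F_m$. By \cref{e:C-choice}, the induction hypothesis, and $F_m \subset F_{m+1}$-type comparisons, they contribute $\lesssim 2^{G(u)+o(u)}$. For the pieces with $\rho(\mtt{i}) > r$, conformality (\cref{e:rho-ball-distort}) gives $N_r(f_{\mtt{i}}(F_1)) \lesssim N_{c r\rho(\mtt{i})^{-1}}(F_1)$. By \cref{e:lambda-choice}, this is $\rho(\mtt{i})^{h+\varepsilon} 2^{(1-\theta)(h+\varepsilon)u + \beta_{F_1}(\theta u)+O(1)}$, and the exponent is at most $G(u) + \varepsilon u$ because $G(u)\geq \beta_{F_1}(\theta u) + h(1-\theta)u$ by definition of $\Psi_h$. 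Summing over $\mtt{i}\in\mathcal{I}^m$ costs the factor $\sum_{\mtt{i}\in\mathcal{I}^m}\rho(\mtt{i})^{h+\varepsilon}$. I need this factor finite, which holds by \cref{e:fin-m} provided $h+\varepsilon > \eta$, and $h\geq\eta$ always holds. The constant depends on $m$ and $\varepsilon$, which is acceptable since the error is only required to be $o_m(u)$. Letting $\varepsilon\to0$ slowly then yields $\beta_{F_{m}}\leq G + o_m(u)$.

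I expect the main obstacle to be the edge case $h=\eta$ together with the summability of the large pieces. If the $\rho(\mtt{i})^{h+\varepsilon}$ sum is finite only for $\varepsilon>0$, the $2^{\varepsilon u}$ loss has to be absorbed into the $o(u)$ term, and this must be done uniformly in $u$ while the induction is iterated $m$ times. If the inductive route gets messy, I would fall back on the direct decomposition of $F_m$ over $\mathcal{I}^m$ described above, which avoids compounding errors. The remaining ingredients — the distortion constants and the rounding of $\theta u$ — should be routine given \cref{e:approx-lip}.
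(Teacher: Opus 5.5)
Your proposal is correct, and its substantive half (the upper bound) is essentially the paper's proof: induction on $m$ via $F_{m+1}=\bigcup_{\mtt{i}\in\mathcal{I}^m}f_{\mtt{i}}(F_1)$, splitting at $\rho(\mtt{i})\le r$, bounding the small pieces by $N_r(F_m)$ and the large ones by $2^{\Psi_h\beta_{F_1}(u)+\varepsilon u}\sum_{\mtt{i}\in\mathcal{I}^m}\rho(\mtt{i})^{h+\varepsilon}$, then finishing with \cref{p:psi}~\cref{i:pres} and idempotence; the $\varepsilon u$ loss only enlarges the constants $A_m$, so nothing compounds. The paper's lower bound is simpler than yours: appending rather than prepending $i_0^{m-1}$ gives $f_j(x_0)=f_{j i_0^{m-1}}(x_0)$, so $F_1\subseteq F_m$ outright and monotonicity of $\Psi_h$ finishes it, with no distortion estimate and no need for injectivity of $f_{i_0^{m-1}}$ to pull covers back.
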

\begin{proof}
    Since $F_1\subset F_2\subset\cdots$, it follows that $\Psi_1\beta_{F_1}\leq \Psi_h\beta_{F_m}$ for all $m\in\N$.
    Thus it suffices to prove for all $\varepsilon>0$ and all $m\in\N$ that
    \begin{equation}\label{e:limsup-version}
        \limsup_{u\to \infty}\bigl(\Psi_h\beta_{F_m}(u)-\Psi_h\beta_{F_1}(u)\bigr)\leq 2\varepsilon u.
    \end{equation}

    Fix $\varepsilon>0$.
    We first prove by induction on $m$ that there are constants $A_m > 0$ so that for all $r = 2^{-u} > 0$ sufficiently small,
    \begin{equation}\label{e:sn-bound}
        N_r(F_m) \leq A_m 2^{\Psi_h\beta_{F_1}(u) + u\varepsilon}.
    \end{equation}
    First, recall from \cref{e:fin-m} that
    \begin{equation}\label{e:pressure-sum}
        B_m \coloneqq \sum_{\mtt{i}\in\mathcal{I}^m} \rho(\mtt{i})^{\eta +\varepsilon} < \infty.
    \end{equation}
    Now we proceed with the induction.
    The case $m=1$ is trivial since $\beta_{F_1} \leq \Psi_h\beta_{F_1}$.
    Now suppose we have established \cref{e:sn-bound} for some $m\in\N$.
    In the computation below the implicit constant depends only on the ambient dimension $d$.
    For each $r = 2^{-u} > 0$ sufficiently small to apply \cref{e:rho-ball-distort}, recalling \cref{e:F-invariance} and \cref{e:C-choice},
    \begin{align*}
        N_r(F_{m+1}) &= N_r\left(\bigcup_{\mtt{i}\in\mathcal{I}^{m}} f_{\mtt{i}}(F_1)\right)\\
                 &\lesssim N_r(F_m) + \sum_{\substack{\mtt{i}\in\mathcal{I}^{m}\\\rho(\mtt{i})>r}}N_{r\cdot \rho(\mtt{i})^{-1}}(F_1)\\
                 &= N_r(F_m) + \sum_{\substack{\mtt{i}\in\mathcal{I}^{m}\\\rho(\mtt{i})>r}}\rho(\mtt{i})^h\cdot 2^{ (1-\theta_{\mtt{i}}(r)) h u + \theta_{\mtt{i}}(r) \beta_{F_1}(u)}\\
                 &\leq N_r(F_m) + 2^{\Psi_h\beta_{F_1}(u)+u\varepsilon}\sum_{\mtt{i}\in\mathcal{I}^{m}} \rho(\mtt{i})^{h+\varepsilon}\\
                 &\leq (A_m + B_m)\left(\frac{1}{r}\right)^{\Psi_h\beta_{F_1}(r)+\varepsilon}.
    \end{align*}
    In the last line, we have used the inductive hypothesis and \cref{e:pressure-sum}.
    Thus \cref{e:sn-bound} follows.
    We have shown that for all $m\in\N$ that there is a constant $a_m \in\R$ so that
    \begin{equation*}
        \beta_{F_m}(u) \leq\Psi_h\beta_{F_1}(u) + 2\varepsilon u + a_m.
    \end{equation*}
    Since $\varepsilon>0$ was arbitrary, it follows that $\beta_{F_m}(u) \leq \Psi_h\beta_{F_1}(u) + o_m(u)$.
    Applying the operator $\Psi_h$ on both sides of this inequality and using \cref{p:psi}~\cref{i:pres} and idempotence of $\Psi_h$ from \cref{p:proj-char}, the proof is complete.
\end{proof}
\begin{remark}
    We actually proved something somewhat stronger: if $\{f_i\}_{i\in\mathcal{I}}$ is an IFS for which there is a function $\rho\colon\mathcal{I}^* \to (0,1]$ such that $\rho(\varnothing) = 1$, $\rho(\mtt{i}\mtt{j}) \leq \rho(\mtt{i})\rho(\mtt{j})$, and
    \begin{equation*}
        f_{\mtt{i}}(B(x,r)) \subseteq B\bigl(f_{\mtt{i}}(x), \rho(\mtt{i}) r\bigr),
    \end{equation*}
    then
    \begin{equation*}
        \Psi_\eta \beta_{F_m}(u) = \Psi_\eta\beta_{F_1}(u) + o_m(u)
    \end{equation*}
    where $\eta = \inf\{t \geq 0:\sum_{i\in\mathcal{I}}\rho(i)^t < \infty\}$ is the usual finiteness parameter.
\end{remark}

\begin{example}
    Using \cref{c:dimlb-char}, or more precisely, the version of \cref{c:dimlb-char} for self-conformal IFSs which we have outlined in this section, we can give an explicit example of a set $I\subset\N$ for which the lower and upper box dimensions do not coincide.

    The first thing to keep in mind is that we require $h < 1/2$: since $\dimB F = \dimB \{n^{-1}:n\in\N\}=1/2$, if $h \geq 1/2$, then $\dimH\Lambda_{\D} = \dimB\Lambda_{\D}$.
    Since the finiteness parameter is $1/2$, in order to lower the Hausdorff dimension we must first lower the finiteness parameter.

    So, we first consider the digit set $\mathcal{D}\coloneqq\{n^{p}:n\in\N, n\geq N\}$ for some $p > 1$ and large $N\in\N$.
    This set has finiteness parameter $1/2p$ and therefore, taking $N$ to be large, the Hausdorff dimension of $\Lambda_{\mathcal{D}}$ can be made arbitrarily close to $1/2p$.

    On the other hand, one can compute that
    \begin{equation*}
        \dimB F = \frac{p}{p+1} > \frac{1}{2p}\qquad\text{where}\qquad F \coloneqq \{n^{-p}: n\in\N, n\geq N\},
    \end{equation*}
    independently of the choice of $N$.
    Moreover, the set of fixed points is essentially $F$.

    So, to complete the construction, by \cref{c:dimlb-char}, we just need to modify $\mathcal{D}$ so that the lower box dimension is strictly less than $p/(p+1)$.
    This is quite easy; simply delete long stretches of consecutive entries but sufficiently sparsely so that the upper box dimension is unchanged.
\end{example}

\subsection{Alternatives to the fixed point set}\label{ss:approx}
In this section, we discuss a minor technical fact concerning the choice of the set $F$ of fixed points.
\begin{definition}
    Fix an IFS $\{f_i\}_{i\in\mathcal{I}}$ acting on a compact set $X$.
    We say that a set $E\subset\bigcup_{i\in\mathcal{I}}f_i(X)$ is an \emph{approximate orbit} if there is an integer $\ell$ such that
    \begin{equation*}
        1 \leq \#(E\cap f_i(X)) \leq \ell
    \end{equation*}
    for all $i\in\mathcal{I}$.
\end{definition}
Assuming the bounded neighbourhood condition, it is easy to check that fixed point set $F$ and the orbit sets $\{f_i(x_0):i\in\mathcal{I}\}$ (for any $x_0\in X$) are approximate orbits.

From the perspective of branching functions, all approximate orbits are equivalent (with implicit constants depending on the compact set $X$, the integer $\ell$, and the constant implicit in the definition of the bounded neighbourhood condition).
This is essentially \cite[Proposition~2.9]{zbl:0940.28009}.
\begin{proposition}\label{p:approx-orbit}
    Let $A$ and $B$ be approximate orbits of an IFS $\{f_i\}_{i\in\mathcal{I}}$ satisfying the bounded neighbourhood condition.
    Then
    \begin{equation*}
        \beta_{A} = \beta_{B} + O(1).
    \end{equation*}
\end{proposition}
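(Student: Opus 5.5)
By symmetry it suffices to prove $N_r(A) \lesssim N_r(B)$ for all $r\in(0,1)$, with implicit constant depending only on $d$, $\diam X$, $\ell$ and the bounded neighbourhood constant $M$. Taking logarithms with $r=2^{-u}$ then gives $\beta_A \le \beta_B + O(1)$, and exchanging the roles of $A$ and $B$ gives the reverse inequality.

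Fix $r$ and split the index set into $\mathcal{I}_{\le} = \{i : \rho(i) \le r\}$ and $\mathcal{I}_{>} = \{i:\rho(i) > r\}$. This mirrors the decomposition in the proof of \cref{t:dimuB-ss}.

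\emph{Small cylinders.} Take $i\in\mathcal{I}_{\le}$. Then $\diam f_i(X) \le r\diam X$. Since $B$ meets $f_i(X)$, every point of $A\cap f_i(X)$ lies within $r\diam X$ of $B$. Hence $A\cap\bigcup_{i\in\mathcal{I}_\le}f_i(X)\subset B^{(r\diam X)}$. Because $N_r(E^{(cr)})\lesssim_c N_r(E)$ in $\R^d$, this part of $A$ is covered by $\lesssim N_r(B)$ balls of radius $r$.

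\emph{Large cylinders.} The set $\mathcal{I}_{>}$ consists of words of length one, so it is mutually incomparable. Take an optimal cover of $B$ by $N_r(B)$ balls $B(y,r)$. Each $i\in\mathcal{I}_{>}$ has some point of $B$ in $f_i(X)$, so $f_i(X)$ meets some ball $B(y,r)$ of the cover. By the bounded neighbourhood condition applied to $\mathcal{F}=\mathcal{I}$, each ball $B(y,r)$ meets at most $M$ sets $f_i(X)$ with $\rho(i)>r$. Therefore $\#\mathcal{I}_{>} \le M N_r(B)$. Each such $f_i(X)$ contains at most $\ell$ points of $A$, so $A\cap\bigcup_{i\in\mathcal{I}_>}f_i(X)$ has at most $\ell M N_r(B)$ points and needs at most that many balls. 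Adding the two parts gives $N_r(A)\lesssim N_r(B)$.

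The step needing the most care is the large-cylinder count. One has to check that the bounded neighbourhood condition is applied exactly as stated: single-letter words form a mutually incomparable family, the ball radius is the same $r$ as the ratio threshold, and the centres may be any points of $X$. There is also an edge case. If $r\ge 1$, or $r$ is not in $(0,1)$ so the condition does not apply, then $N_r(A)$ and $N_r(B)$ are both $\lesssim 1$ because $X$ is bounded. That case only contributes to the $O(1)$ term. I expect no further obstacle.
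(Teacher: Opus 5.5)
Your proof is correct and is essentially the paper's argument. You use the same split of indices by $\rho(i)\le r$ versus $\rho(i)>r$, the neighbourhood estimate $N_r(B^{(r\diam X)})\lesssim N_r(B)$ for the small cylinders, and the bounded neighbourhood condition together with the bound $\ell$ for the large ones. The paper phrases the large-cylinder step as $N_r(A_{>r})\approx\#\{i:\rho(i)>r\}\approx N_r(B_{>r})$, whereas you bound $\#\{i:\rho(i)>r\}$ directly through a cover of all of $B$; this is only a cosmetic difference.

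One small point of care (the paper glosses over it too): the centres of an optimal cover of $B$ need not lie in $X$. To apply the condition literally, use radius-$r$ balls centred in $B$; at most $N_{r/2}(B)\lesssim N_r(B)$ of them are needed.
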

\begin{proof}
    Let $r \in (0, 1)$ and let $\mathcal{F} = \{i\in\mathcal{I}:\rho(i) > r \}$.
    We partition $A = A_{\leq r} \cup A_{> r}$ where $A_{> r} = A \cap\bigcup_{i\in\mathcal{F}}f_i(X)$, and similarly for $B$.

    If $i\notin\mathcal{F}$, then $\diam f_i(X) \leq \rho(i) \diam X \leq r \diam X$.
    Since $B\cap f_i(X) \neq\varnothing$, it follows that $A_{\leq r}$ is contained in the $r\cdot (\diam X)$-neighbourhood of $B$.
    Of course, the same also holds with $A$ and $B$ swapped, and therefore
    \begin{equation*}
        N_r(A_{\leq r}) \lesssim N_r(B)\qquad\text{and}\qquad N_r(B_{\leq r}) \lesssim N_r(A).
    \end{equation*}
    To complete the proof, it suffices to show that
    \begin{equation*}
        N_r(A_{> r}) \approx \#\mathcal{F}\approx N_r(B_{> r}).
    \end{equation*}
    Let $M$ be the constant implicit in the bounded neighbourhood condition, so that
    \begin{equation*}
        \#\{i\in\mathcal{F}:f_{i}(X)\cap B(x,r)\neq\varnothing\}\leq M\qquad\text{for all $x\in X$}.
    \end{equation*}
    Fix a cover $\{B(x_i,r)\}_{i=1}^n$ where $n = N_r(A_{>r})$.
    On one hand, $B(x_i, r)$ intersects at most $M$ images $f_i(X)$, so $\#\mathcal{F} \leq n M$.
    On the other hand, each $f_i(X)$ contains at most $\ell$ points in $A$, so $n \leq \ell \#\mathcal{F}$.
    Of course, the same holds with $B$ in place of $A$, as required.
\end{proof}

\subsection{Sharpness of the bounds on the lower box dimensions}\label{ss:sharp}

We now explain why the bounds in \cref{c:dimlb} are sharp.

We focus our attention on the case $d=1$ for notational simplicity; see \cite{arxiv:2406.12821} for the general case.
We will prove the following result:
\begin{proposition}\label{p:sharp}
    For any $0 < h\leq 1$, $0\leq s \leq t \leq 1$, and $\beta\in\mathcal{D}(h,s,t,1)$ there exists an IFS of similarities on $[0,1]$ satisfying the bounded neighbourhood condition with attractor $\Lambda$ and fixed point set $F$ such that $\dimH\Lambda = h$, $\dimlB F = s$, $\dimuB F = t$, and $\dimlB\Lambda = \beta$.
\end{proposition}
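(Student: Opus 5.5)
Since \cref{t:asymp} reduces everything to the branching function of $F$, the construction splits into two stages. First I fix a target $f\in\mathcal{B}(1)$ with $\liminf f(u)/u = s$, $\limsup f(u)/u = t$ and $\liminf \Psi_h f(u)/u = \beta$. Then I realize $f$, up to $o(u)$, as $\beta_F$ for an explicit IFS of similarities on $[0,1]$ whose Hausdorff dimension is exactly $h$.

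The degenerate cases are handled separately. If $t\le h$, or if $s=t$, then $\mathcal{D}$ is the singleton $\{\max\{h,s\}\}$, and by \cref{e:triv} any realization of $h,s,t$ already gives the correct $\dimlB\Lambda$. So assume $s<t$ and $h<t$.

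\textbf{Step 1: the target function.} I take $f$ of staircase type, imitating the extremal function $g$ in the proof of \cref{c:dimlb} with $d=1$. Choose rapidly increasing scales $w_1<w_2<\cdots$, say $w_{k+1}\ge k\,w_k$. Near each $w_k$, the function $f$ rises with slope $1$ until it reaches the line $tu$. It then stays flat until it meets the line $s'u$, and rises again.

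The amount of flatness is controlled by a parameter. At one extreme, $f$ dips only to $\beta$-type levels. At the other, it dips all the way to $su$, with $s'=s$, exactly as in $g$. For the extreme choice, the computation in \cref{c:dimlb} gives $\liminf \Psi_h f(u)/u = h+\frac{(t-h)(1-h)s}{t-hs}$.

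To hit an intermediate $\beta$, I shorten or lengthen the flat piece. I also alternate the shape between the $g$-profile and a profile of slope $s$. This makes $\liminf \Psi_hf/u$ vary continuously from $\max\{h,s\}$ up to the maximum, so the intermediate value theorem gives $\beta$. I should double-check that $\liminf f/u=s$ and $\limsup f/u=t$ survive these modifications. This only requires the gaps between the $w_k$ to grow fast.

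\textbf{Step 2: realizing $f$ by an IFS.} For a prescribed $h$, I need countably many disjoint closed intervals $I_i\subset[0,1]$ with lengths $\rho(i)$ satisfying two conditions: $\inf\{t:\sum\rho(i)^t<1\}=h$, and the set of fixed points (equivalently, by \cref{p:approx-orbit}, any approximate orbit such as the left endpoints) has branching function $f+o(u)$.

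The plan is to place the intervals in dyadic blocks. In the window $[2^{-k-1},2^{-k}]$ I put roughly $2^{f(k+1)-f(k)}$ points and attach to each a tiny interval. In the spirit of Lüroth-type spacings, each interval has length comparable to $2^{-k}\cdot 2^{-(\text{something large})}$, so that the intervals stay disjoint and the bounded neighbourhood condition holds automatically.

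The lengths I treat as a free parameter. Multiplying every length by a common factor $\lambda$ does not move the points, so $\beta_F$ is unchanged up to $O(1)$. Meanwhile $\sum_i(\lambda\rho(i))^t$ is continuous and decreasing in $\lambda$ on the region of finiteness.

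The main obstacle is to arrange $\dimH\Lambda=h$ exactly while keeping $F$ fixed. I first choose the lengths so that the finiteness parameter satisfies $\eta\le h$. This is possible by making the lengths decay super-polynomially, for instance $\rho\sim 2^{-k^2}$ in block $k$, which makes $\eta=0$. Then $P$ is finite and continuous on $(0,\infty)$, and scaling by $\lambda$ shifts $P$ by $t\log\lambda$. I choose $\lambda$ so that $P(h)=0$, which gives $\dimH\Lambda=h$ by the first theorem.

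Finally, I invoke \cref{t:asymp} to obtain $\beta_\Lambda=\Psi_h f+o(u)$, and hence $\dimlB\Lambda=\beta$.
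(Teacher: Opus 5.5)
There is a genuine gap, and it is in Step 2, which is where the real work of this proposition lies.

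First, your point configuration does not have branching function $f+o(u)$. Since $f\in\mathcal{B}(1)$, we have $0\le f(k+1)-f(k)\le 1$, so your recipe puts only one or two points in each window $[2^{-k-1},2^{-k}]$. Then $F\cap[2^{-k},1]$ has $O(k)$ points, so $\beta_F(u)=O(\log u)$ and $\dimuB F=0$, whatever $f$ is.

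The obvious correction is to put about $2^{f(k)}$ points in $[2^{-k},1]$, spread evenly through the windows. This fails too. For $f(u)=tu$ the $k$th window costs about $\min\{2^{tk},2^{u-k}\}$ balls at scale $2^{-u}$, so $\beta_F(u)\approx\frac{t}{1+t}u$; this is the same phenomenon as for $\{n^{-p}\}$. Evenly spread points cannot encode an arbitrary multi-scale profile. You need Cantor-like clustering, and that is the missing idea.

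The paper first builds, in \cref{l:ctr-F}, a compact null set $K\subset[0,1]$ with $\beta_K=f+o(u)$ by dyadic subdivision. It keeps $2^{a_{k+1}}$ children of each level-$k$ interval, where the $a_k\in\{0,1\}$ are the increments of an integer-valued approximation of $f$, with $a_n=0$ forced on a density-zero set to make $K$ null. Then, in \cref{l:IFS-exist}, it maps $[0,1]$ into the complementary intervals $J_i=(a_i,b_i)$ of $K$. The left endpoints form an approximate orbit whose closure is $K$ up to $\{0,1\}$, so $\beta_F=\beta_K+O(1)$ by \cref{p:approx-orbit}. Your placement near $0$ could be rescued, e.g.\ by putting a $2^{-k}$-scaled copy of a finite $2^{-k^2}$-net of such a $K$ into the $k$th window, but only after constructing $K$.

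Second, tuning $P(h)=0$ by a uniform factor $\lambda$ is not compatible with disjointness. Nothing ensures the required $\lambda$ is small enough; with uniformly tiny lengths it will typically be large. Once the rescaled intervals overlap or leave $[0,1]$, you lose the bounded neighbourhood condition, on which both $\dimH\Lambda=h$ and \cref{t:asymp} depend.

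For $h=1$ the scheme is impossible outright. This value is allowed by the statement, and it is exactly the degenerate case you dispatch via \cref{e:triv}. Disjoint subintervals of $[0,1]$ satisfy $\sum_i\rho(i)\le 1$. With $\eta=0<1$, $P$ is continuous at $1$, so $P(1)=0$ forces each $\lambda\rho(i)$ to equal the gap to the right of its point. No uniform rescaling of your lengths achieves this.

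The paper needs no tuning at all. Since $K$ is null, $\sum_i|J_i|=1$, so setting $\rho(i)=|J_i|^{1/h}$ gives $\sum_i\rho(i)^h=1$ directly. Moreover, $h\le 1$ guarantees $f_i([0,1])\subset J_i$. For $h<1$ you could instead enlarge finitely many intervals inside their gaps until $\sum_i\rho(i)^h\ge 1$, and then scale down.

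Step 1 is fine in outline, provided the profile you alternate with the $g$-profile is a long stretch of slope $\beta$ rather than slope $s$. Slope $s$ only produces the endpoint $\max\{h,s\}$, and no intermediate value argument is then needed.
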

We will prove something a bit more general, and obtain \cref{p:sharp} by applying the result to a particularly chosen branching function $f$.
\begin{theorem}\label{t:exist}
    Let $f\in\mathcal{B}(1)$ and $h\in(0,1]$.
    Then there exists an IFS of similarities $\{f_i\}_{i\in\mathcal{I}}$ with fixed point set $F$ and attractor $\Lambda$ such that $\dimH\Lambda = h$ and $f(u) = \beta_F(u) + o(u)$.
    In particular,
    \begin{equation*}
        \beta_\Lambda(u) = \Psi_h f(u) + o(u).
    \end{equation*}
\end{theorem}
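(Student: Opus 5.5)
The plan is to build the IFS on $[0,1]$ from a countable family of disjoint subintervals, chosen to meet two constraints at once. The left endpoints of the intervals (essentially the fixed points) should form a set $F$ with $\beta_F = f + o(u)$. The interval lengths $\rho(i)$ should make the pressure zero cross at exactly $h$. Once this is done, the final claim $\beta_\Lambda = \Psi_h f + o(u)$ follows from \cref{t:asymp} together with \cref{p:psi}~\cref{i:ou}. So everything reduces to the construction.

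\emph{Step 1: a point set with prescribed branching function.} Given $f\in\mathcal{B}(1)$, I build a countable set $E\subset[0,1/2]$ that accumulates only at $0$ and satisfies $\beta_E = f+o(u)$. I work dyadically. At scale $2^{-k}$, the number of points of $E$ lying in $[2^{-k-1},2^{-k}]$ and separated at mutual distance roughly $2^{-n}$ for $n\geq k$ is chosen so that the total count $N_{2^{-n}}(E)\approx \sum_{k\le n} \#(\text{level-}k\text{ points at resolution }2^{-n})$ tracks $2^{f(n)}$. The simplest version puts about $2^{f(n+1)-f(n)}\cdot 2^{f(n)}$ new points at spacing $\approx 2^{-n}$ in a region near scale $2^{-n}$ where room exists. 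Because $f$ is $1$-Lipschitz, the required number of points at spacing $2^{-n}$ never exceeds the $\approx 2^{n}$ slots available, so no collisions are forced. Errors of polynomial size in $n$ are harmless because only $o(u)$ accuracy is needed. I will take the set to be a union of arithmetic progressions placed in the dyadic shells, and verify $\log N_{2^{-n}}(E) = f(n)+O(\log n)$ directly.

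\emph{Step 2: attach maps.} To each point $x\in E$ I attach a similarity $f_x$ of ratio $\rho_x$ with $f_x(0)\approx x$. The ratio $\rho_x$ is at most a small constant times the gap to the neighbouring point, so the images are disjoint intervals. This gives the bounded neighbourhood condition with constant $3$, as in the Lüroth example. The fixed points then form an approximate orbit, and \cref{p:approx-orbit} gives $\beta_F = \beta_E + O(1)$. To force $\dimH\Lambda = h$, I decouple the Hausdorff dimension from $E$. I shrink the ratios further, taking $\rho_x = c\,2^{-n\kappa}\cdot(\text{gap})$ for points introduced at level $n$, with a large $\kappa$, so that $\sum_x \rho_x^{t}<\infty$ for every $t>0$ (finiteness parameter $0$). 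I then add one finite family of maps, for instance $m$ maps of equal ratio $\lambda$ placed in $[1/2,1]$, whose contribution to the pressure is $m\lambda^t$. Shrinking the countable part makes its pressure sum at $t=h$ negligibly small, and I choose $\lambda$ so that the full sum $\sum_i\rho(i)^h$ equals $1$ exactly. This is possible by continuity and monotonicity in $\lambda$, since $h>0$. Strict monotonicity of $P$ gives $h=\inf\{t:P(t)<0\}$. Adding finitely many fixed points does not change $\beta_F$ beyond $O(1)$, and the Hausdorff dimension equals $h$ by the Mauldin--Urbański theorem above.

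\emph{Main obstacle.} The delicate step is Step 1: realizing an arbitrary $f\in\mathcal{B}(1)$ up to $o(u)$ by a countable set accumulating at one point. In particular, slopes of $f$ near $1$ must remain achievable while also leaving room for the disjoint image intervals. My first attempt is to reserve the shell $[2^{-k-1},2^{-k}]$ for the points needed at resolution $2^{-k\cdot C}$ with slowly growing bookkeeping. If that fails, I will instead place the points in a Cantor-like arrangement inside $[0,1/2]$ whose branching function is $f$, and take a countable dense subset of its gap endpoints.
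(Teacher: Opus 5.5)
Your overall architecture is the paper's: realize $f$ by a point set, hang disjoint similarities on it, then apply \cref{t:asymp} with \cref{p:psi}~\cref{i:ou}. Your way of forcing $\dimH\Lambda = h$ is different, however, and it fails for $h=1$, which the statement includes.

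You solve $S + m\lambda^h = 1$, with $S=\sum_x\rho_x^h$, only as an equation in $\lambda$. But the $m$ intervals must also fit disjointly in $[1/2,1]$, i.e.\ $m\lambda = m^{1-1/h}(1-S)^{1/h}\leq 1/2$. For $h<1$ you can arrange this by taking $m$ large; for $h=1$ it is impossible.

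No variant of your scheme reaches $h=1$. The images are disjoint subintervals of $[0,1]$, so $\sum_i\rho(i)\leq 1$. If $P$ is finite somewhere below $1$ (as in your construction), then $h=1$ forces $\sum_i\rho(i)=1$. So the images must exhaust $[0,1]$ up to a null set, which tiny images around $E$ plus finitely many intervals cannot do. If instead $P(t)=\infty$ for all $t<1$, then $N_r(F)\gtrsim\#\{i:\rho(i)\geq r\}$ gives $\dimuB F=1$, contradicting $\beta_F=f+o(u)$ for, say, $f=0$.

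The missing idea is the paper's: build a Lebesgue-null compact $K$ with $\beta_K=f+o(u)$ (\cref{l:ctr-F}), and map $[0,1]$ into each complementary gap $J_i$ with ratio $|J_i|^{1/h}$ (\cref{l:IFS-exist}). This gives the following for every $h\in(0,1]$ at once:
\begin{itemize}
\item null measure gives $\sum_i\rho(i)^h=\sum_i|J_i|=1$;
\item $h\leq 1$ keeps each image inside its gap, so the bounded neighbourhood condition holds;
\item the left gap endpoints form an approximate orbit whose closure is $K$ up to $\{0,1\}$.
\end{itemize}
No tuning family is needed.

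There are two smaller issues. First, with $\kappa$ fixed and up to about $2^n$ points at level $n$, the sum $\sum_x\rho_x^t$ converges only for $t>1/\kappa$, so the finiteness parameter is not $0$. This is harmless if you take $\kappa>1/h$ (or use $2^{-n^2}$), which still makes $P$ finite near $h$ and $\sum_x\rho_x^h$ small.

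Second, your primary Step~1 is not yet a construction. A region at distance about $2^{-n}$ from $0$ holds only $O(1)$ points at spacing $2^{-n}$. Reserving $[2^{-k-1},2^{-k}]$ for resolution $2^{-kC}$ with $C$ fixed caps $\dimuB E$ at $1-1/C$. A set with one accumulation point can be built: lay arithmetic progressions of about $2^{f(n)}/n^2$ points at spacing $2^{-n}$ end to end towards $0$. The $1$-Lipschitz property of $f$ controls every coarser scale, and the factor $n^{-2}$ keeps the total length finite. But you do not need this. Your fallback, a Cantor-type set with its gap endpoints, is exactly \cref{l:ctr-F}. Your Step~2 then goes through for $h<1$ provided each map is attached at a left endpoint $a_i$ with its image inside $J_i$.
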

We prove this theorem in two steps:
\begin{enumerate}[nl]
    \item Given $f\in\mathcal{B}(1)$, we construct a compact set $K\subset [0,1]$ with measure 0 and at most one accumulation point and with branching function $\beta_{K}(u) = f(u) + o(u)$.
    \item Given a compact set $K\subset [0,1]$ of measure 0, we construct an IFS $\{f_i\}_{i\in\mathcal{I}}$ such that $K$ is approximately the fixed point set of the IFS.
\end{enumerate}
We begin by constructing a compact set corresponding to an arbitrary branching function.
\begin{lemma}\label{l:ctr-F}
    For all $f\in\mathcal{B}(1)$, there exists a non-empty compact set $K\subset [0,1]$ of measure 0 such that
    \begin{equation*}
        \beta_{K}(u) = f(u) + o(u).
    \end{equation*}
\end{lemma}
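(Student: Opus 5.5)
We will build $K$ as a countable set accumulating only at $0$ (together with $0$ itself), arranged as a union of dyadic blocks. Such a set is automatically compact and of measure $0$. The strategy is to choose, scale by scale, how many points to place at spacing $2^{-k}$ so that the covering number at scale $2^{-k}$ tracks $2^{f(k)}$. Because $f\in\mathcal{B}(1)$ is increasing and $1$-Lipschitz with $f(0)=0$, the integer sequence $n_k\coloneqq\lfloor 2^{f(k)}\rfloor$ satisfies $n_k\le n_{k+1}\le 2n_k+1$. These are exactly the growth constraints that covering numbers of subsets of $[0,1]$ obey at dyadic scales. It also suffices to match $\beta_K$ at integer $u$: both $\beta_K$ (by \cref{e:approx-lip} and monotonicity) and $f$ change by $O(1)$ between consecutive integers.

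The construction uses disjoint dyadic intervals $I_k=[2^{-k},2^{-k+1}]$. In $I_k$ we place a set $P_k$ of $m_k$ equally spaced points with gap $2^{-2k}$ or so. The idea is that at scale $2^{-j}$ with $j\ge 2k$ each point of $P_k$ is resolved separately, while at scale $j\le 2k$ the set $P_k$ looks like a segment of length $m_k2^{-2k}$. The cleaner option, which I would try first, is to avoid two-parameter bookkeeping altogether. Instead, place into $I_k$ a single arithmetic progression $Q_k$ of $a_k$ points with spacing exactly $2^{-k}\cdot 2^{-k}$ and length $a_k 2^{-2k}\le 2^{-k}$. The number $a_k$ is chosen so that the total contribution at scale $2^{-j}$ is $N_{2^{-j}}(K)\approx \sum_{k\le j/2} a_k + \sum_{j/2<k\le j}\max(1,a_k2^{j-2k}) + O(1)$; the last $O(1)$ accounts for everything inside $[0,2^{-j}]$.

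A more robust alternative is to make the blocks sparse in $k$. Choose a rapidly increasing sequence of scales $u_1<u_2<\cdots$ with $u_{\ell+1}\ge 2u_\ell$ and $u_\ell/u_{\ell+1}\to 0$, but not too fast: we still need $f$ to be approximately linear, up to $o(u)$, on the pieces. The plan is then as follows.
\begin{enumerate}
\item Approximate $f$ up to $o(u)$ by a piecewise linear $g\in\mathcal{B}(1)$ with slopes in $[0,1]$, whose breakpoints $v_\ell$ satisfy $v_{\ell+1}-v_\ell=o(v_\ell)$ and $v_\ell\to\infty$. For instance, take linear interpolation on breakpoints $v_\ell=\ell^2$.
\item Realize $g$ greedily. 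Maintain a finite set $K_\ell$ with $\beta_{K_\ell}=g+O(\ell)$ on $[0,v_\ell]$, and with $\beta_{K_\ell}$ constant beyond $v_\ell$ plus the trivial increase from isolated points. On $[v_\ell,v_{\ell+1}]$ the slope of $g$ is some $\sigma\in[0,1]$. Inside each $2^{-v_\ell}$-ball of the current cover, insert a self-similar-like finite pattern: a $\sigma$-dimensional Cantor-type finite set, using the binary tree that keeps $2^{\sigma\cdot\text{depth}}$ branches, down to depth $v_{\ell+1}-v_\ell$. This makes the covering number grow with slope $\sigma$ on that range.
\item Because $v_\ell=\ell^2$, the accumulated additive errors $O(\ell)=O(\sqrt{u})$ are $o(u)$. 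The limit set $K=\overline{\bigcup K_\ell}$ has branching function $g+o(u)=f+o(u)$.
\end{enumerate}
$K$ is compact as a closure in $[0,1]$. Its measure is $0$ because at depth $v_\ell$ it is covered by $2^{g(v_\ell)+O(\ell)}$ intervals of length $2^{-v_\ell}$. When $g$ has slope strictly less than $1$ on a positive proportion of scales, this tends to $0$. In the degenerate case $f(u)=u+o(u)$, replace the slope $1$ by $1-1/\ell$ on the $\ell$-th piece, which changes $g$ only by $o(u)$.

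The main obstacle is step 2: controlling $\beta$ of the refined set at all intermediate scales simultaneously, and in particular making sure that refinements at later stages do not distort covering numbers at earlier scales. This is handled by keeping all new points within distance $2^{-v_\ell}\cdot\frac{1}{4}$ of the stage-$\ell$ points and by using a tree construction, so that $N_{2^{-u}}(K)$ equals, up to a factor $2$, the number of tree nodes at depth $\lceil u\rceil$. That node count is $2^{g(u)+O(\ell)}$ by construction.
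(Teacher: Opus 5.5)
Your step 2 is essentially the paper's construction, but the paper's version is simpler. It takes an integer-valued, increasing, $1$-Lipschitz $h$ with $f(k)-1<h(k)\leq f(k)$ and sets $a_k=h(k)-h(k-1)\in\{0,1\}$. It then replaces every dyadic interval of generation $k$ by $2^{a_{k+1}}$ dyadic children, so generation $k$ has exactly $2^{h(k)}$ intervals and $\beta_K=f+O(1)$ at once. Your piecewise-linear interpolation at $v_\ell=\ell^2$ and the accumulated $O(\ell)$ error are unnecessary, and that error is what breaks your measure argument below.

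Your preliminary sketches should be discarded. The final set is a Cantor set, not a countable set accumulating only at $0$, so measure zero is not automatic. The arithmetic-progression version cannot work at all: $a_k\leq 2^k$ and spacing $2^{-2k}$ force $N_{2^{-j}}(K)\lesssim 2^{j/2}$, so it never realizes an $f$ with $\limsup_{u\to\infty}f(u)/u>1/2$. It is a variant of $\{n^{-1}:n\in\N\}$.

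The genuine gap is step 3. Each node has one or two dyadic children, so the Lebesgue measure of $K$ is $\lim_{k\to\infty}2^{-k}\cdot\#\{\text{nodes at depth }k\}$. This vanishes exactly when the tree fails to split at infinitely many depths.

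\emph{The slope criterion is wrong.} Slope strictly less than $1$ is not the relevant criterion. The function $f(u)=u-1+2^{-u}$ lies in $\mathcal{B}(1)$ and has slope strictly less than $1$ everywhere. Yet any tree tracking $f$ to within $O(1)$ has positive measure.

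\emph{The degenerate-case repair is too weak for your bookkeeping.} Take $f(u)=u$ and cap slopes at $1-1/\ell$ on $[\ell^2,(\ell+1)^2]$. This gives $v_\ell-g(v_\ell)=2\ell+O(\log\ell)$, the same order as your $O(\ell)$ error, whose sign you never fix. Indeed $K=[0,1]$ satisfies $\beta_K(v_\ell)=v_\ell+O(1)=g(v_\ell)+O(\ell)$. So your invariants are compatible with full measure, and the bound $2^{g(v_\ell)+O(\ell)-v_\ell}$ need not tend to $0$.

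\emph{How to fix it.} The paper forces non-splitting, $a_{n_k}=0$, along a density-zero sequence $(n_k)$. This lowers $\log\#\mathcal{Q}_m$ by at most $\#\{k:n_k\leq m\}=o(m)$, so $\beta_K=f+o(u)$ survives. Meanwhile the measure of $K$ is at most $2^{-\#\{k:n_k\leq m\}}\to 0$. Within your framework, either of the following would also suffice:
\begin{itemize}
\item make the error one-sided, with at most $2^{g(k)}$ nodes at depth $k$ and $u-g(u)\to\infty$; or
\item cap slopes at, say, $1-\ell^{-1/2}$, whose deficit of order $\ell^{3/2}$ is still $o(v_\ell)$ but dominates an $O(\ell)$ error.
\end{itemize}
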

\begin{proof}
    We begin by constructing a compact set $K_f$ which is quite uniform in space, and then construct $F$ as a subset of $K_f$.

    First, by taking a supremum over all integer-valued $1$-Lipschitz functions $h$ with $h(0) = 0$ bounded above by $f$, there exists a function $h\colon\Z\cap[0,\infty)\to\Z\cap[0,\infty)$ which is also an increasing and $1$-Lipschitz function with $h(0) = 0$, such that
    \begin{equation*}
        f(k) - 1 < h(k) \leq f(k)\quad\text{for all}\quad k\geq 0.
    \end{equation*}
    Then, define a sequence $(a_k)_{k=1}^\infty \in \{0,1\}^{\N}$ by the rule $a_k = h(k) - h(k-1)$.

    We inductively construct nested families of dyadic interval as follows.
    Let $\mathcal{Q}_0 = \{[0,1]\}$.
    Now, suppose we have constructed $\mathcal{Q}_k$ as a union of dyadic intervals at level $k$, for some $k \geq 0$.
    Then, replace each interval $Q\in\mathcal{Q}_k$ with $2^{a_{k+1}}$ sub-intervals at level $k+1$.
    Finally, set
    \begin{equation*}
        K_f = \bigcap_{k=0}^\infty\bigcup_{Q\in\mathcal{Q}_k}Q.
    \end{equation*}
    Note that $\mathcal{Q}_k$ is a union of exactly $2^{h(k)}$ dyadic intervals, so $\beta_{K_f}(u) = f(u) + O(1)$.

    To guarantee that $K$ has measure 0, we modify the sequence $(a_k)_{k=1}^\infty$ so that $a_{n_k} = 0$ along a density 0 subsequence $n_k$.
    The resulting function is $f + o(u)$ and it is clear from the above construction that $K$ has measure 0.
\end{proof}
Now given the set $K$, we construct the IFS of similarities.
This is particularly easy in $\R$.
\begin{lemma}\label{l:IFS-exist}
    Let $K\subset [0,1]$ be any non-empty compact set of Lebesgue measure 0.
    Then there exists an IFS of similarities $\{f_i\}_{i\in\mathcal{I}}$ satisfying the bounded neighbourhood condition with attractor $\Lambda$ such that $\dimH\Lambda = h$ and with an approximate orbit $F$ such that $\overline{F}\cup\{0,1\} = K \cup\{0,1\}$.
\end{lemma}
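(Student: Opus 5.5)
Since $K$ is compact with Lebesgue measure $0$, its complement $[0,1]\setminus K$ is a countable union of disjoint open intervals $\{(a_j,b_j)\}_{j\in\mathcal{J}}$ of total length $1$. The plan is to place the first-level cylinders $f_i([0,1])$ inside these gaps. The points of $F$ then sit inside the gaps, and as the gaps shrink they accumulate precisely on $K$, with possible extra accumulation only at $0$ and $1$.

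\textbf{Step 1: choose the cylinders.} In each gap $(a_j,b_j)$ I will place finitely many disjoint closed intervals $I_{j,1},\ldots,I_{j,n_j}$. Each has length a fixed small proportion $\lambda_j$ of $b_j-a_j$, and they are positioned so that their endpoints come within distance $\lesssim b_j-a_j$ of both $a_j$ and $b_j$. For instance, take $n_j=1$ with the interval centred in the gap, or take two intervals, one near each end. Let $f_{j,k}$ be the orientation-preserving affine map of $[0,1]$ onto $I_{j,k}$; its ratio is $\rho=\lambda_j(b_j-a_j)\leq\xi<1$.

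\textbf{Step 2: bounded neighbourhood condition.} All first-level images are disjoint intervals inside $[0,1]$, so for any mutually incomparable family the images $f_{\mtt{i}}((0,1))$ are disjoint open intervals of length $\rho(\mtt{i})$. A ball of radius $r$ meets at most $3$ such intervals of length $>r$, exactly as in the Lüroth example.

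\textbf{Step 3: approximate orbit.} Let $F$ consist of one point from each $I_{j,k}$, for example its fixed point; this is an approximate orbit with $\ell=1$. Every point of $F$ lies in some gap, within distance $b_j-a_j$ of $K\cup\{0,1\}$. Since only finitely many gaps have length $>\delta$, every accumulation point of $F$ lies in $K\cup\{0,1\}$. Conversely, every point of $K$ is either an endpoint of a gap, which is approximated by points of $F$ if we put an interval near each end of the gap, or a limit of arbitrarily small gaps. Hence $\overline{F}\cup\{0,1\}=K\cup\{0,1\}$. Points of $F$ themselves lie outside $K$, but they belong to the closure; the "$\cup\{0,1\}$" in the statement absorbs the boundary of $[0,1]$.

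\textbf{Step 4: tuning $h$ (the main obstacle).} I must still arrange that $\dimH\Lambda=h$, where $\dimH\Lambda=\inf\{t:\sum_i\rho(i)^t<1\}$. The geometric layout is fixed, so the only free parameters are the ratios $\lambda_j$ and the multiplicities $n_j$. Since $\sum_j(b_j-a_j)=1$, the finiteness parameter satisfies $\eta\leq1$.

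First, to make $h$ small, shrink all the ratios. Replacing $\rho_j$ by $\lambda_j(b_j-a_j)^{p}$, with $p$ large and correspondingly tiny intervals still near the gap ends, lowers $\eta$ and makes $\sum_i\rho(i)^t<1$ already for $t$ slightly above $\eta$.

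Second, to make $h$ larger, add many extra small cylinders inside a single large gap. Each extra cylinder changes $F$ by at most finitely many points, so $\beta_F$ changes only by $O(1)$ and the closure is unaffected.

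Third, to hit $h$ exactly, use continuity of $P$ on the finiteness region. If necessary, rescale a single cylinder in the large gap continuously to achieve $\sum_i\rho(i)^h=1$. For this I need to choose parameters so that $\eta<h$ and $P(h)$ is finite, for example by scaling everything so that $\eta$ is close to $0$ and then adding finitely many cylinders.

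With these adjustments, Step 2 still holds, so the earlier theorem gives $\dimH\Lambda=h$, and Step 3 gives the required approximate orbit.
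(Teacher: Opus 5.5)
There are two genuine gaps in your proposal. Both disappear with the paper's choice of maps.

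\emph{The closure identity in Step 3 is false for your $F$.} An approximate orbit must lie in $\bigcup_i f_i([0,1])$. Your cylinders $I_{j,k}$ sit strictly inside the open gaps, so $F\subset[0,1]\setminus K$. Since $F\subset\overline{F}$ and $F\cap(K\cup\{0,1\})=\varnothing$, the equality $\overline{F}\cup\{0,1\}=K\cup\{0,1\}$ fails as soon as $F\neq\varnothing$. The ``$\cup\{0,1\}$'' absorbs only the two points $0$ and $1$, nothing else.

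The other inclusion fails too. For $K=\{1/2\}$ your $F$ is a finite set at positive distance from $1/2$: intervals placed ``near'' an endpoint do not approximate it unless other gaps accumulate there. For the same reason, adding extra cylinders in Step 4 does change the closure.

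The fix is to attach each first-level image to a gap endpoint, i.e.\ $f_i(0)=a_i$, and take $F=\{f_i(0)\}=\{a_i\}$. Then $F\subset K\cup\{0\}$. Since $K$ has measure zero it has empty interior, so for $x\in K\setminus\{1\}$ and $\epsilon>0$ some gap meets $(x,x+\epsilon)$, and its left endpoint lies in $[x,x+\epsilon)$.

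\emph{Step 4 is much harder than necessary and, as described, fails at $h=1$.} This matters because the lemma is applied with arbitrary $h\in(0,1]$. Finitely many disjoint closed intervals inside an open gap have total length strictly less than the gap. So your layout always has $\sum_i\rho(i)<\sum_j(b_j-a_j)=1$, and adding cylinders inside a gap of length $L$ adds at most $L$ to this sum. Hence the target $\sum_i\rho(i)^h=1$ is unreachable when $h=1$. For $h<1$ an intermediate-value tuning can be made rigorous.

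The missing idea is that your exponent $p$ should simply be $1/h$, with $\lambda_j=1$: set $f_i(x)=(b_i-a_i)^{1/h}x+a_i$.
\begin{itemize}
\item Since $h\le 1$, this maps $[0,1]$ into $[a_i,b_i]$.
\item Measure zero of $K$ gives $\sum_i\rho(i)^h=\sum_i(b_i-a_i)=1$ exactly.
\item Hence $\sum_i\rho(i)^t>1$ for $t<h$ and $<1$ for $t>h$, so $\inf\{t:P(t)<0\}=h$. No tuning is needed, and no requirement $\eta<h$.
\item The images $f_i((0,1))$ are disjoint open intervals, so the bounded neighbourhood condition holds.
\item $\{f_i(0)\}$ is the required approximate orbit.
\end{itemize}
This is exactly the paper's proof.
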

\begin{proof}
    Enumerate the maximal disjoint open intervals of $K^c$ as $\{J_i\}_{i\in\mathcal{I}}$ for a countable index set $\mathcal{I}$.
    For each $i\in\mathcal{I}$, write $J_i = (a_i, b_i)$, and let $f_i\colon (0,1)\to J_i$ be defined by the rule
    \begin{equation*}
        f_{i}(x) = (b_{i} - a_i)^{1/h} x + a_i.
    \end{equation*}
    Since the intervals $J_i$ are pairwise disjoint, it is easy to see that the IFS satisfies the bounded neighbourhood condition.
    Moreover, by definition, $K\cup\{0, 1\} = \overline{\{f_i(0):i\in\mathcal{I}\}}\cup\{0,1\}$.

    The IFS $\{f_i\}_{i\in\mathcal{I}}$ has pressure function
    \begin{equation*}
        P(t) = \sum_{i\in\mathcal{I}} (b_i - a_i)^{t/h}.
    \end{equation*}
    Since $F$ has at most 1 accumulation point, $\sum_{i=1}^\infty (b_i - a_i) = 1$, so $P(h) = 1$, as required.
\end{proof}
\begin{proofref}{t:exist}
    Let $f\in\mathcal{B}(1)$, and by \cref{l:ctr-F}, get a countable set $E\subset[0,1$ with at most $1$ accumulation point such that $\beta_{E}(u) = f(u) + o(u)$.
    Then apply \cref{l:IFS-exist} with the set $E$ and value $h$ to get an IFS $\{f_i\}_{i\in\mathcal{I}}$ satisfying the bounded neighbourhood condition with attractor $\Lambda$ and fixed point set $F$, such that $\beta_F(u) = \beta_E(u) + o(u) = f(u) + o(u)$ and $\dimH\Lambda =h$.
    The proof is therefore complete by \cref{t:asymp}.
\end{proofref}
Now we can complete the proof of the sharpness result.
\begin{proofref}{p:sharp}
    If $\mathcal{D}(h,s,t,1)$ is a singleton, let $f\in\mathcal{B}(1)$ be such that $\liminf_{u\to\infty}u^{-1}f(u) = s$ and $\liminf_{u\to\infty}u^{-1}f(u) = t$.
    Apply \cref{t:exist} to get an IFS with fixed point set $F$ and attractor $\Lambda$ such that $\beta_F = f + o(u)$ and $\dimH\Lambda = h$.
    Then by \cref{c:dimlb}, since $\mathcal{D}(h,s,t,1)$ is a singleton, it must be that $\dimlB\Lambda$ is the expected value.

    Otherwise, $\mathcal{D}(h,s,t,1)$ is not a singleton, so $0 < h < t$ and $0 < s < t$.
    For simplicity, assume $\max\{h,s\}<\beta$ and $t < 1$; in the general case, apply the construction inductively with sequences $\beta \leq \beta_n \leq t_n \leq t$ such that $\beta_n > \max\{h, s\}$ decreases to $\beta$ and $t_n < 1$ increases to $1$.

    The building block is a function $\psi\colon[0,1]\to [0,t]$ which is increasing, $1$-Lipschitz, and satisfies $\psi(u) = tu$ for all $u$ in a neighbourhood of $0$ and $u = 1$.
    The function $\psi$ is defined in a similar way as depicted in \cref{f:dimlb-bound}: fix values $0 < v < w < 1$; on $[0,v]$, let $\psi$ have slope $t$, on $[v,w]$, let $\psi$ have slope $0$, and on $[w, 1]$ let $\psi$ have slope $\alpha$.
    Adjusting $v$, $w$, and $\alpha$ and using the fact that $\beta\in\mathcal{D}(h,s,t,1)$, it can be arranged so that $\psi(z)/z=\beta$ where $z$ is the unique value for which the segment joining $(v, tv)$ and $(z, \psi(z))$ has slope $h$.

    Finally, to construct the function $f$, set $f(u) = tu$ for $u \leq 1$, and for $v^{n-1} \leq u \leq v^{n}$ define $f(u) = v^n \psi(u/v^n)$.
    Now $f$ on $[v^{n-1},v^n]$ is precisely a self-similar copy of $\psi$ on $[v,1]$, and the choice of $\psi$ ensures, by \cref{t:asymp}, that the corresponding IFS provided by \cref{t:exist} with branching function $f$ and value $h$ has the desired dimensions.
\end{proofref}

\subsection{Intermediate dimensions}
The intermediate dimensions are a continuously parametrized family of dimensions which lie between the Hausdorff and box dimensions, first introduced in \cite{zbl:1448.28009}.
The motivation is the following: since the box dimensions are defined using covers consisting of a fixed radius, whereas the Hausdorff dimension is defined using arbitrary covers, one can obtain a notion of dimension by allowing some flexibility of the size of the covers.
More precisely, the covers are permitted to lie in a block of scales of the form $[r, r^\theta]$ where $0 < \theta \leq 1$.
\begin{definition}\label{d:cov-cost}
    Given $0<\theta \leq 1$ and $r > 0$, the \emph{$(s, r, \theta)$-covering cost} is the quantity
    \begin{equation*}
        \mathcal{C}_r^{s,\theta}(E) = \inf\Bigl\{\sum_i(\diam A_i)^s: E\subset\bigcup_i A_i\text{ and }\diam A_i \in [r,r^\theta]\text{ for all $i$}\Bigr\}.
    \end{equation*}
\end{definition}
Now, the \emph{upper intermediate dimensions} of $E$ are given by
\begin{align*}
    \overline{\dim}_{\theta} E = \inf\bigl\{s \geq 0: \mathcal{C}^{s,\theta}_r(E) \lesssim_s 1\text{ for all }0<r < 1\bigr\}.
\end{align*}
For example, $\mathcal{C}_r^{s,1}(E) = r^s N_r(E)$, so $\overline{\dim}_1 E = \dimuB E$.

Analogously to the results for the upper box dimensions, in \cite{zbl:1547.28014}, Banaji \& Fraser proved that for an infinitely generated self-conformal set satisfying the usual conditions, for all $\theta \in (0,1]$,
\begin{equation*}
    \overline{\dim}_\theta\Lambda = \max\{h, \overline{\dim}_\theta F\}.
\end{equation*}
Similarly to the proof due to Mauldin \& Urbański in the case of the upper box dimension \cite{zbl:0940.28009}, they use an induction argument.

We give a simplified proof of this result in the self-similar case using the approach in \cref{t:dimuB-ss}.
In the proof of \cref{t:dimuB-ss}, we needed upper bounds for the covering numbers $N_{r}(f_{\mtt{i}}(F))$ where $\rho(\mtt{i}) > r$.
By self-similarity, this number is precisely $N_{r^{\theta_\mtt{i}(r)}}(F)$, where we recall that $r^{\theta_{\mtt{i}}(r)} = r \rho(\mtt{i})^{-1}$.

For the upper intermediate dimensions, the bound is slightly different:
\begin{align*}
    \mathcal{C}_r^{s,\theta}&(f_{\mtt{i}}(F))\\
    &=\inf\Bigl\{\sum_i(\diam A_i)^s: f_{\mtt{i}}(F)\subset\bigcup_i A_i\text{ and }\diam A_i \in [r,r^\theta]\Bigr\}\\
    &=\inf\Bigl\{\sum_i(\rho(\mtt{i})\cdot\diam B_i)^s: F\subset\bigcup_i B_i\text{ and }\diam B_i \in [\rho(\mtt{i})^{-1}r, \rho(\mtt{i})^{-1}r^\theta]\Bigr\}.
\end{align*}
In particular, we now have two cases to consider: either $r \leq \rho(\mtt{i}) \leq r^\theta$, or $r^\theta < \rho(\mtt{i}) < 1$.
Equivalently, these are the cases $0 \leq \theta_{\mtt{i}}(r) \leq 1-\theta$ and $1 -\theta  < \theta_{\mtt{i}}(r) < 1$.
The former case is easy, since $\rho(\mtt{i})$ is a permitted diameter so we can cover each $f_{\mtt{i}}(F)$ with a bounded number of balls.
In the latter case, we can use the upper intermediate dimensions with a smaller value of $\theta$: set
\begin{equation*}
    \kappa_{\mtt{i}}(r, \theta) = \frac{\theta_{\mtt{i}}(r) + \theta - 1}{\theta_{\mtt{i}}(r)} \in (0,\theta]
\end{equation*}
so that
\begin{equation}\label{e:rescale}
    \mathcal{C}_r^{s,\theta}(f_{\mtt{i}}(F)) = \rho(\mtt{i})^s\cdot\mathcal{C}_{r^{\theta_{\mtt{i}}(r)}}^{s, \kappa_{\mtt{i}}(r, \theta)}(F)\leq \rho(\mtt{i})^s\cdot\mathcal{C}_{r^{\theta_{\mtt{i}}(r)}}^{s, \theta}(F).
\end{equation}
Here is the formal version of the above argument.
\begin{theorem}\label{t:int}
    Let $\{f_i\}_{i\in\mathcal{I}}$ be an IFS of similarities with attractor $\Lambda$ and fixed point set $F$.
    Then
    \begin{equation*}
        \overline{\dim}_\theta\Lambda \leq \max\{h, \overline{\dim}_\theta F\}.
    \end{equation*}
    In particular, if $\dimH\Lambda \geq h$ (for instance, if the IFS satisfies the bounded neighbourhood condition), then equality holds.
\end{theorem}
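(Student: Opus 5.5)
We follow the proof of \cref{t:dimuB-ss}, replacing covering numbers by covering costs. Fix $\theta\in(0,1]$, $\varepsilon>0$, and set $s = \max\{h,\overline{\dim}_\theta F\}+2\varepsilon$; we show $\mathcal{C}^{s,\theta}_r(\Lambda)\lesssim_{\varepsilon} 1$ for small $r$. By \cref{l:cov-alt}, $\Lambda\subset (F^*(r))^{(r\cdot\diam X)}$. Enlarging each cover element by $r\diam X$ multiplies diameters by at most a constant, and keeps them above $r$ (and below a constant multiple of $r^\theta$, which is harmless after adjusting $r$ by a constant). So it suffices to bound
\begin{equation*}
    \mathcal{C}^{s,\theta}_r(\Lambda)\lesssim \sum_{\substack{\mtt{i}\in\mathcal{I}^*\\\rho(\mtt{i})\geq r}}\mathcal{C}^{s,\theta}_r(f_{\mtt{i}}(F)),
\end{equation*}
using subadditivity of the covering cost over countable unions.

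We split the sum according to the two cases discussed before the statement. If $r\leq\rho(\mtt{i})\leq r^\theta$, then $f_{\mtt{i}}(F)$ is covered by a single set of diameter $\approx\rho(\mtt{i})$, which is an admissible scale. This gives the contribution $\lesssim\rho(\mtt{i})^s\leq\rho(\mtt{i})^{h+\varepsilon}$. If $\rho(\mtt{i})>r^\theta$, we apply \cref{e:rescale}: the term is at most $\rho(\mtt{i})^s\,\mathcal{C}^{s,\theta}_{r^{\theta_{\mtt{i}}(r)}}(F)$. Since $s>\overline{\dim}_\theta F$, we have $\mathcal{C}^{s,\theta}_\delta(F)\lesssim_\varepsilon 1$ uniformly in $\delta\in(0,1)$, so this contribution is also $\lesssim_\varepsilon\rho(\mtt{i})^{s}\leq\rho(\mtt{i})^{h+\varepsilon}$.

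Summing and applying \cref{e:total-bound} gives $\mathcal{C}^{s,\theta}_r(\Lambda)\lesssim_\varepsilon\sum_{\mtt{i}\in\mathcal{I}^*}\rho(\mtt{i})^{h+\varepsilon}\lesssim_\varepsilon 1$. Hence $\overline{\dim}_\theta\Lambda\leq s$, and letting $\varepsilon\to0$ proves the upper bound. This is simpler than the box case because no interpolation in $\theta_{\mtt{i}}$ is needed: the covering cost is already normalised by $r^s$.

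For equality, we have $\overline{\dim}_\theta\Lambda\geq\overline{\dim}_\theta F$ by monotonicity, and $\overline{\dim}_\theta\Lambda\geq\dimH\Lambda\geq h$.

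The main obstacle is the justification of \cref{e:rescale} together with the monotonicity $\mathcal{C}^{s,\kappa}_\delta\leq\mathcal{C}^{s,\theta}_\delta$ for $\kappa\leq\theta$. This holds because a smaller $\kappa$ widens the admissible range $[\delta,\delta^\kappa]$, so the infimum can only decrease. One must also check that the rescaled diameter window is exactly $[r^{\theta_{\mtt{i}}},(r^{\theta_{\mtt{i}}})^{\kappa}]$, and that $\overline{\dim}_\theta F$ controls $\mathcal{C}^{s,\theta}_\delta(F)$ uniformly over all $\delta$, which is immediate from the definition with the $\lesssim_s 1$ bound. A secondary point is that the neighbourhood enlargement keeps diameters within a constant factor of the allowed window. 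If this causes trouble, we apply the argument at scale $cr$ and use the fact that changing $r$ by a constant factor does not affect the dimension.
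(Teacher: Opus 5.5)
Your proposal is correct and is essentially the paper's proof. It covers $\Lambda$ via \cref{l:cov-alt} and splits the words at ratio $r^\theta$. Cylinder images with ratio at most $r^\theta$ are covered by $O(1)$ sets of diameter comparable to the ratio. The larger ones are handled with \cref{e:rescale} together with the monotonicity $\mathcal{C}^{s,\kappa}_\delta\le\mathcal{C}^{s,\theta}_\delta$ for $\kappa\le\theta$, and everything is summed using \cref{e:total-bound}. You also justify the equality clause, which the paper leaves implicit.

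One small correction on the neighbourhood enlargement: rescaling $r$ by a constant cannot place the window $[r,Cr^\theta]$ inside any $[r',(r')^\theta]$. The clean fix is to cover each enlarged set of diameter at most $C\diam A$ by $O(1)$ balls of diameter exactly $\diam A\in[r,r^\theta]$; this is what the paper's first $\lesssim$ silently absorbs.
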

\begin{proof}
    Let $r\in(0,1)$ and $s > \max\{h, \overline{\dim}_\theta F\}$.
    By \cref{l:cov-alt},
    \begin{equation*}
        \Lambda \subset (F^*(r))^{(r\cdot\diam X)}
    \end{equation*}
    so that
    \begin{equation*}
        \mathcal{C}_r^{s,\theta}(\Lambda)  \lesssim \mathcal{C}_r^{s,\theta}\left(\sum_{\substack{\mtt{i}\in\mathcal{I}^*\\\rho(\mtt{i}) > r}}f_{\mtt{i}}(F)\right) \leq\mathcal{C}_r^{s,\theta}\left(\sum_{\substack{\mtt{i}\in\mathcal{I}^*\\r^\theta \geq \rho(\mtt{i}) > r}}f_{\mtt{i}}(F)\right)+\sum_{\substack{\mtt{i}\in\mathcal{I}^*\\\rho(\mtt{i}) > r^\theta}}\mathcal{C}_r^{s,\theta}(f_{\mtt{i}}(F)).
    \end{equation*}
    To bound the first term, since $\diam f_{\mtt{i}}(F) \lesssim r^\theta$ and $s > h$,
    \begin{equation*}
        \mathcal{C}_r^{s,\theta}\left(\sum_{\substack{\mtt{i}\in\mathcal{I}^*\\r^\theta \geq \rho(\mtt{i}) > r}}f_{\mtt{i}}(F)\right) \lesssim
        \sum_{\substack{\mtt{i}\in\mathcal{I}^*\\r^\theta \geq \rho(\mtt{i}) > r}}\rho(\mtt{i})^s
        \leq
        \sum_{\mtt{i}\in\mathcal{I}^*}\rho(\mtt{i})^s \lesssim_s 1.
    \end{equation*}
    To bound the second term, by \cref{e:rescale} and since in addition $s > \overline{\dim}_\theta F$,
    \begin{equation*}
        \sum_{\substack{\mtt{i}\in\mathcal{I}^*\\\rho(\mtt{i}) > r^\theta}}\mathcal{C}_r^{s,\theta}(f_{\mtt{i}}(F)) \leq
        \sum_{\substack{\mtt{i}\in\mathcal{I}^*\\\rho(\mtt{i}) > r^\theta}}\rho(\mtt{i})^s \mathcal{C}_{r^{\theta_{\mtt{i}}(r)}}^{s,\theta}(F) \lesssim_s \sum_{\mtt{i}\in\mathcal{I}^*}\rho(\mtt{i})^s \lesssim_s 1.
    \end{equation*}
    Therefore $\mathcal{C}_r^{s,\theta}(\Lambda) \lesssim_s 1$, and since $s > \max\{h, \overline{\dim}_\theta F\}$ was arbitrary this completes the proof.
\end{proof}
\begin{remark}
     Banaji \& Fraser also obtained some partial results for the Assouad dimension (and other related notions of dimension) in \cite{zbl:1558.28002}.
     See that paper for definitions and more discussion of the results.
\end{remark}

\subsection{Some open problems}
In \cref{t:asymp} we proved an asymptotic formula for the covering numbers $N_r(\Lambda)$ in terms of the Hausdorff dimension $h$ and $N_r(F)$.
However, the asymptotic formula was only up to error terms of the form $C_\varepsilon (1/r)^{\varepsilon}$, for arbitrarily small $\varepsilon$.
There are multiple places where this error term enters the proof, but one notable location where it seems rather unavoidable is when bounding sums of the form $\sum_{\mtt{i}\in\mathcal{I}^*}\rho(\mtt{i})^h$.
If such sums do not introduce error terms, is it possible to get better asymptotics?
In the best case scenario, it might be possible to get an error term which is an absolute constant independent of $r$.
\begin{question}\label{q:better-asymp}
    Let $\{f_i\}_{i\in\mathcal{I}}$ be an IFS of similarities satisfying the bounded neighbourhood condition with attractor $\Lambda$ and fixed point set $F$.
    Suppose $\{f_i\}_{i\in\mathcal{I}}$ is \emph{regular}, in that the value $h$ satisfies $\sum_{i\in\mathcal{I}}\rho(i)^h = 1$.
    Does its branching function satisfy
    \begin{equation*}
        \beta_\Lambda(u) = \sup_{0 \leq z \leq u}\bigl(\beta_F(z) + h(u-z)\bigr) + O(1)?
    \end{equation*}
    What about the more general self-conformal case?
\end{question}
See \cite{zbl:0852.28005} for more discussion concerning the assumption that the IFS is regular.

In \cref{t:int} we saw that the results concerning the upper box dimension extended quite easily to the case of the upper intermediate dimensions.
One can also define the \emph{lower intermediate dimensions}:
\begin{align*}
    \underline{\dim}_{\theta} E = \inf\bigl\{s \geq 0: \mathcal{C}^{s,\theta}_r(E) \lesssim_s 1\text{ for arbitrarily small }r > 0\bigr\}.
\end{align*}
As before, $\underline{\dim}_1 E = \dimlB E$.

In order to understand the lower box dimension, we required information about the covering numbers $N_r(F)$ at all scales $r$.
We can also consider such information for the intermediate dimensions.
For values $0 \leq v \leq u$, define
\begin{equation*}
    \mathcal{W}_E(s, u, v) = \inf\Bigl\{\sum_i(\diam A_i)^s: E\subset\bigcup_i A_i\text{ and }\diam A_i \in [2^{-u}, 2^{-(u-v)}]\Bigr\}.
\end{equation*}
This is essentially \cref{d:cov-cost} but with a somewhat different parametrization.
One can show that $s\mapsto \log\mathcal{W}_E(s, u, v)$ is strictly decreasing and continuous, and approximately satisfies $\log\mathcal{W}_E(0, u,v)\geq 0$ and $\log\mathcal{W}_E(d, u,v) \leq 0$ in $\R^d$.
Therefore, one may define (somewhat informally) a function $\omega_E(u,v)$ so that
\begin{equation*}
    \log\mathcal{W}_E((u-v)^{-1}\omega_E(u,v), u, v) = 0.
\end{equation*}
For example,
\begin{equation*}
    0 = \log\mathcal{W}_E(u^{-1}\omega_E(u,0), u, 0) = \beta_E(u) - \omega_E(u,0)
\end{equation*}
so $\omega_E(u,0) = \beta_E(u)$ recovers the usual branching function.
Moreover, one can check for $0 < \theta \leq 1$ that $\omega_E$ encodes the intermediate dimensions:
\begin{align*}
    \overline{\dim}_{\theta} E &= \limsup_{u\to\infty}\frac{\omega_E(u, (1-\theta)u)}{\theta u}\\
    \underline{\dim}_{\theta} E &= \liminf_{u\to\infty}\frac{\omega_E(u, (1-\theta)u)}{\theta u}.
\end{align*}
Here is an explicit question.
\begin{question}\label{q:lower-int}
    Let $\{f_i\}_{i\in\mathcal{I}}$ be an IFS of similarities satisfying the bounded neighbourhood condition with attractor $\Lambda$ and fixed point set $F$.
    What can be said about $\underline{\dim}_\theta \Lambda$?
    Can one obtain a similar classification as proven in \cref{c:dimlb-char}?
    More generally, can one provide a formula for $\omega_\Lambda$ in terms of $h$ and $\omega_F$?
\end{question}

\begin{acknowledgements}
    These notes were prepared for a minicourse taught during the semester program ``Continued Fractions, Fractals, Ergodic theory and Dynamics'' during the Simons semester at IMPAN in Warsaw.
    My stay was supported by a Simons Foundation grant (award no.\ SFI-MPS-T-Institutes-00010825) and from Polish State Treasury funds (agreement no.\ MNiSW/2025/DAP/491).
\end{acknowledgements}
\end{document}

%% file: figures/bf_transform.tex
\begin{tikzpicture}[
    x=1.15cm,
    y=1.15cm,
    >=Stealth,
    fgraph/.style={thick},
    psigraph/.style={thick,dashed}
]
    \draw[->] (-0.15,0) -- (6.45,0) node[right] {$u$};
    \draw[->] (0,-0.15) -- (0,3.65);

    \draw[psigraph] (1,0.9) -- (3.24,1.908);
    \draw[psigraph] (3.6,2.25) -- (6.1,3.375);

    \draw[fgraph]
        (0,0) -- (1,0.9) -- (2.4,1.11) -- (3.6,2.25) -- (5.4,2.43) -- (6.1,3.025);

    \node[anchor=north east] at (0,0) {$0$};

    \matrix [
        anchor=north west,
        fill=white,
        draw=gray!50,
        thin,
        row sep=2pt,
        column sep=4pt,
        inner sep=2pt,
        nodes={anchor=center}
    ] at ([yshift=-6pt]current bounding box.north west) {
        \draw[fgraph, black] (0,0) -- (0.55,0); & \node {$f$}; \\
        \draw[psigraph, black] (0,0) -- (0.55,0); & \node {$\Psi_h f$}; \\
    };
\end{tikzpicture}

%% file: figures/dimlb_bound.tex
\def\s{0.6}
\begin{tikzpicture}[
    x=1.0cm,
    y=1.0cm,
    >=Stealth,
    fgraph/.style={gray!55},
    ggraph/.style={thick},
    psigraph/.style={thick, dashed},
    guide/.style={thick, gray!55, dotted}
]
    \draw[->] (-0.15,0) -- (9.2,0) node[right] {$u$};
    \draw[->] (0,-0.15) -- (0,4.7);

    \draw[guide] (0,0) -- (8.6,1.075) node[black, right] {$su$}; %
    \draw[guide] (0,0) -- (9,4.5) node[black, above] {$tu$}; %

    \coordinate (start) at (2 * \s, \s);
    \coordinate (min) at (10 * \s, 3 * \s);

    \draw[ggraph] (0,0)
        -- (start)
        -- (8 * \s, \s)
        -- (14 * \s, 7 * \s)
        -- (9, 4.5);

    \draw[psigraph] (start) -- (min) node[right] {$\scriptstyle z\cdot \left(h + \frac{(t-h)(d-h)s}{dt - hs}\right)$};

    \draw[guide] (2*\s, -0.15) node[below, black]{$v$} -- (2*\s, \s);
    \draw[guide] (8*\s, -0.15) node[below, black]{$w$} -- (8*\s, \s);
    \draw[guide] (10*\s, -0.15) node[below, black]{$z$} -- (min);

    \matrix [
        anchor=north west,
        fill=white,
        draw=gray!50,
        thin,
        row sep=2pt,
        column sep=4pt,
        inner sep=2pt,
        nodes={anchor=center}
    ] at ([yshift=-25pt]current bounding box.north west) {
        \draw[ggraph, black] (0,0) -- (0.55,0); & \node {$g$}; \\
        \draw[psigraph, black] (0,0) -- (0.55,0); & \node {$\Psi_h g$}; \\
    };
\end{tikzpicture}